\theoremstyle{plain}
\newtheorem{theorem}{Theorem}[section]
\newtheorem{lemma}[theorem]{Lemma}
\newtheorem{proposition}[theorem]{Proposition}
\newtheorem{corollary}[theorem]{Corollary}
\theoremstyle{definition}
\newtheorem{definition}[theorem]{Definition}
\newtheorem{remark}[theorem]{Remark}
\newtheorem{example}{Example}[section]
\newtheorem{problem}{Problem}
\newtheorem{conjecture}[theorem]{Conjecture}
\theoremstyle{remark}
\newtheorem*{notation}{Notation}
\numberwithin{equation}{section}
\newcommand{\Sp}{\operatorname{Sp}}
\DeclareMathOperator{\Res}{Res}
\DeclareMathOperator{\Ind}{Ind}
\DeclareMathOperator{\lcm}{lcm}
\DeclareMathOperator{\sgn}{sgn}
\DeclareMathOperator{\DP}{DP}
\DeclareMathOperator{\OP}{OP}
\DeclareMathOperator{\Gal}{Gal}
\newcommand{\jacobi}[2]{\left( \frac{#1}{#2} \right)}
\title{Minimal Polynomials in Spin Representations\\of Symmetric and Alternating Groups}
\author{Amritanshu Prasad\footnote{The Institute of Mathematical Sciences, Chennai, Homi Bhabha National Institute, Mumbai.\\E-mail: amri@imsc.res.in}, Velmurugan S\footnote{Indian Institute of Science, Bangalore.\\E-mail: velmurugan.math@gmail.com}, Alexey Staroletov\footnote{Sobolev Institute of Mathematics, Novosibirsk, Russia. \\E-mail:staroletov@math.nsc.ru}}
\date{\vspace{-5ex}}
\begin{document}
	\maketitle
    \begin{abstract}
		We determine minimal polynomials of all elements of the double covers of symmetric $\tilde S_n$ and alternating groups $\tilde A_n$ in all irreducible complex spin representations.
        We also solve the problems of finding the orders of elements of $\tilde S_n$ and determining conjugacy classes for powers of given element of $\tilde S_n$.
	\end{abstract}
\emph{Keywords:}{ double cover of the symmetric group, double cover of the alternating group, spin representation, minimal polynomial, shifted Young tableaux.}

\emph{AMS Subject Classification}: Primary 20C30; Secondary  20C15, 05E10, 05E05.

\section{Introduction}

Let $G$ be a double cover of either the symmetric group or the alternating group.
In this paper, we determine the minimal polynomials of elements of $G$ in all irreducible complex spin representations.
Our main results (Theorems~\ref{theorem:main} and~\ref{theorem:main_alt}) show that, for most elements $g\in G$ and most irreducible spin representations $\rho$ of $G$, the minimal polynomial of $\rho(g)$ is $x^k\pm 1$, where $k$ is the order of $g$ modulo the center of $G$.
The exceptional cases consist of four infinite families and finitely many sporadic cases.

The problem of determining minimal polynomials of group elements in irreducible representations goes back to the works of Hall--Higman~\cite{Hall_Higman} and Shult~\cite{Shult}.
A general formulation was articulated by Tiep and Zalesskii~\cite{Tiep_Zalesski}.

\begin{problem}[Tiep--Zalesskii]\label{problem:tiep_zalesskii}
Given a finite group $G$, classify all pairs $(\rho,g)$, where $\rho$ is an irreducible representation of $G$ and $g\in G$, such that the degree $d$ of the minimal polynomial of $\rho(g)$ satisfies $1<d<o(g),$ where $o(g)$ is the order of $g$ in $G/Z(G)$. In the first instance, under the condition that $o(g)$ is a prime power.
\end{problem}

This problem is especially interesting for groups close to simple groups (for example, almost simple and quasi-simple groups) as one expects the minimal polynomial of $\rho(g)$ to be of degree $o(g)$ for most pairs $(\rho,g)$ and the exceptions can be explicitly described.
In survey articles~\cite{survey_Zalesski_algebraic_Chevalley_2009,survey_Zalesski_eigenvalue_1_2025}, Zalesskii explains the importance 
of this problem and its connections to arithmetic geometry, algebraic geometry, topology, combinatorics, game theory and group theory.
At present, the only families for which this problem is solved for all elements and all irreducible complex representations are the symmetric and alternating groups.
This was achieved in a sequence of works: Swanson~\cite{Swanson} (for long cycles in symmetric groups), Yang--Staroletov~\cite{Yang_Staroletov} and Velmurugan~\cite{vel_dec_2024} (for specific families of permutations), Amrutha--Prasad--Velmurugan~\cite{p2024cyclic,Inv_vectors,fpsac2024} (for the occurrence of eigenvalue $1$), and finally Staroletov~\cite{Staroletov_all_eigenvalues} (complete solution).
In positive characteristic, even less is known. One of the main results is due to Thompson~\cite{Thompson_composition_factors_2008}, who described the minimal polynomials of $q$-cycles in $A_q$ in characteristic $p$ with $p<q<2p$ and $q$ prime.
In the case of the double covers $\tilde S_n$ and $\tilde A_n$, known results include $p$-cycles in characteristic $p$~\cite{Kleshchev_Zalesski_minimal_prime_elements_char_p_double_alt_2004} and $3$-cycles in arbitrary characteristic~\cite{Wales_3_cycle_minimal_polynomial}.

Our double cover of the symmetric group has the following defining relations:
\begin{align*}
\tilde{S}_n & =\langle z,t_1,t_2,\dotsc,t_{n-1}~|~z^2=1, t_i^2=z, (t_jt_{j+1})^3=z, (t_jt_k)^2=z  \text{ for } |j-k|\geq 2 \rangle.
\end{align*}
The irreducible spin representations of these groups are parametrized by strict partitions.
Let $\DP_n$ denote the set of strict partitions of $n$, and write $\DP_n^+$ (respectively, $\DP_n^-$) for those with an even (respectively, odd) number of even parts.
In the case of $\tilde S_n$: for each $\lambda\in\DP_n^+$, there exists a unique irreducible spin representation $(\rho_\lambda,V_\lambda)$ indexed by $\lambda$, and for each $\lambda\in\DP_n^-$, there exist exactly two non-isomorphic irreducible spin representations $(\rho_\lambda^\pm,V_\lambda^\pm)$ indexed by $\lambda.$
For a tuple $\alpha$ of positive integers, let $\lcm(\alpha)$ denote the least common multiple of its entries.

We now state the main result for $\tilde S_n$.

\begin{theorem}\label{theorem:main}
	Let $(\rho,V)$ be an irreducible spin representation of $\tilde S_n$ indexed by $\lambda\in \DP_n$. Let $g\in \tilde S_n$ have an image in $S_n$ with cycle type $\alpha=(\alpha_1,\dots,\alpha_l)$, and set $k=\lcm(\alpha)$.
	Let $\varepsilon\in\{\pm 1\}$ be such that $g^k=\varepsilon$.
	Then the minimal polynomial of $\rho(g)$ is $x^k-\varepsilon$, except in the following cases.

	For $n\le 8$, $(\lambda,\alpha)$ is one of the pairs listed in Table~\ref{table:exceptional_cases_1_8}.

	For $n>8$, $(\lambda,\alpha)$ lies in one of the following four infinite families:
	\begin{enumerate}
		\item $\lambda=(n)$, $\alpha$ contains $3$, all other parts of $\alpha$ are not divisible by $3$, and $5$ is not the unique part of $\alpha$ divisible by $5$.
		In this case, the minimal polynomial is $\frac{x^{k}- \varepsilon}{x^{k/3}- \varepsilon}.$
		\item $\lambda=(n)$, $\alpha$ contains $5$, all other parts of $\alpha$ are not divisible by $5$, and $3$ is not the unique part of $\alpha$ divisible by $3$.
		In this case, the minimal polynomial is $\frac{x^{k}- \varepsilon}{x^{k/5}- \varepsilon}$.
		\item $\lambda=(n)$, $\alpha$ contains both $3$ and $5$, while all other parts of $\alpha$ are not divisible by $3$ or $5$.
		In this case, the minimal polynomial is $\frac{(x^{k}- \varepsilon)(x^{k/15}- \varepsilon)}{(x^{k/3}- \varepsilon)(x^{k/5}- \varepsilon)}$.
		\item $\lambda=(n-1,1)$, $\alpha$ contains both $3$ and $5$, while all other parts of $\alpha$ are not divisible by $3$ or $5$.
		In this case, the minimal polynomial is $\frac{x^{k} - \varepsilon}{x^{k/15} - \varepsilon}$.
	\end{enumerate}
\end{theorem}
The double cover of the alternating group is the unique index $2$ subgroup of $\tilde S_n$.
The representation theory of double cover of alternating group is similar to that of alternating group:
 for each $\lambda\in\DP_n^-$, there exists a unique irreducible spin representation which is obtained by restricting the irreducible representation $(\rho_\lambda,V_\lambda)$ of $\tilde S_n$, denoted by $(\rho_\lambda,V_\lambda)$ again, while for each $\lambda\in\DP_n^+$, there are exactly two non-isomorphic irreducible spin representations $(\rho_\lambda^\pm,V_\lambda^\pm)$ indexed by $\lambda$ with $\Res^{\tilde S_n}_{\tilde A_n} V_\lambda = V_\lambda^+ \oplus V_\lambda^-$.
The main theorem for double cover of the alternating group $\tilde A_n$ is the following.

\begin{theorem}\label{theorem:main_alt}
	Let $(\rho,V)$ be an irreducible spin representation of $\tilde A_n$ indexed by $\lambda\in\DP_n$.
	Let $g\in \tilde A_n$ have an image in $A_n$ of cycle type $\alpha=(\alpha_1,\alpha_2,\dotsc,\alpha_l)$, and set $k=\lcm(\alpha)$.
	Let $\varepsilon \in \{\pm 1\}$ be such that $g^k=\varepsilon$.
	Then the minimal polynomial of $\rho(g)$ is $x^k-\varepsilon$, except in the following cases.

	For $n\leq 9$, $(\lambda,\alpha)$ is one of the pairs listed in Table~\ref{table:exceptional_cases_1_8_alt}.

	For $n\geq 10$, $(\lambda,\alpha)$ lies in one of the following four infinite families:
\begin{enumerate}
		\item $\lambda = (n)$, $\alpha$ contains $3$, all other parts of $\alpha$ are not divisible by $3$, and $5$ is not the unique part of $\alpha$ divisible by $5$.
		In this case, the minimal polynomial is $\frac{x^{k}-\varepsilon}{x^{k/3}-\varepsilon}$.
		\item $\lambda = (n)$, $\alpha$ contains $5$, all other parts of $\alpha$ are not divisible by $5$, and $3$ is not the unique part of $\alpha$ divisible by $3$.
		In this case, the minimal polynomial is $\frac{x^{k}-\varepsilon}{x^{k/5}-\varepsilon}$.
		\item $\lambda = (n)$, $\alpha$ contains both $3$ and $5$, while all other parts of $\alpha$ are not divisible by $3$ or $5$.
		In this case, the minimal polynomial is $\frac{(x^{k}-\varepsilon)(x^{k/15}-\varepsilon)}{(x^{k/3}-\varepsilon)(x^{k/5}-\varepsilon)}$.
		\item $\lambda = (n-1,1)$, $\alpha$ contains both $3$ and $5$, while all other parts of $\alpha$ are not divisible by $3$ or~$5$.
		In this case, the minimal polynomial is $\frac{x^{k}-\varepsilon}{x^{k/15}-\varepsilon}$.
	\end{enumerate}
\end{theorem}

The paper is organized as follows.
In Section~\ref{section:Preliminaries for the double cover of the symmetric group}, we recall the computation of spin characters for $\tilde{S}_n$ and $\tilde{A_n}$, and in Section~\ref{section:combinatorics_on_tableaux} we explain the analogues of the Littlewood--Richardson rule for spin characters.
Section~\ref{section:minimal_polynomial_of_n_cycles} is devoted to the proof of Theorem~\ref{theorem:main} for elements of cycle type $(n)$.
Theorems~\ref{theorem:main} and~\ref{theorem:main_alt} are proved from these results using the same general approach that worked for $S_n$ and $A_n$, however, several new difficulties arise. Computing the power map on conjugacy classes of $\tilde S_n$ and $\tilde A_n$ is, in general, a subtle question that we resolve in Section~\ref{sec:conju-classes-powers}. 
Sections~\ref{section:minimal_polynomial_some_family} and~\ref{section:minimal_poly_of_any_element_in_A_n} contain the proofs of Theorems~\ref{theorem:main} and~\ref{theorem:main_alt}, respectively.
\section{Computing Spin Character Values}\label{section:Preliminaries for the double cover of the symmetric group}

A double cover $\tilde S_n$ of the symmetric group is a group $G$ for which there exists a short exact sequence:
\begin{displaymath}
1\rightarrow \mathbb{Z}_2\rightarrow G\rightarrow S_n\rightarrow 1,
\end{displaymath}	
that does not split, and such that the image of $\mathbb{Z}_2$ is the center of $G$.

Schur~\cite{Schur} showed that double covers of $S_n$ exist if and only if $n\geq 4$.
He defined two families of groups by presentation:
\begin{equation}\label{eq:relations}
\begin{split}
\tilde{S}_n  &=\langle z,t_1,t_2,\dotsc,t_{n-1}~|~z^2=1, t_i^2=z, (t_jt_{j+1})^3=z, (t_jt_k)^2=z  \text{ for } |j-k|\geq 2 \rangle,\\
\nonumber \tilde{S}'_n  &=\langle z,t_1',t_2',\dotsc,t_{n-1}'~|~z^2=1, t_i'^2=1, (t_j't_{j+1}')^3=1, (t_j't_k')^2=z  \text{ for } |j-k|\geq 2 \rangle.
\end{split}
\end{equation}
For $n\geq 4$, these groups are double covers of $S_n$, and they are non-isomorphic except when $n=6$.
Moreover, up to isomorphism, they are the only double covers of $S_n$.

We shall identify free generators $t_j$ (resp. $t_j'$) with their images in $\tilde{S}_n$ (resp. $\tilde{S}'_n$).
But we denote the image of $z$ by $-1$. 

The map $t_j\mapsto it_j'$ gives rise to an isomorphism $\mathbb{C}[\tilde S_n]\to \mathbb{C}[\tilde S_n']$ of group algebras.
We will focus on $\tilde S_n$ since all results can easily be transferred to $\tilde S_n'$ using this isomorphism.

An element $\sigma\in \tilde{S}_n$ is said to be even (resp. odd) if its projection to $S_n$ is an even (resp. odd) permutation.
Denote by $\tilde A_n$ the subgroup of even elements of $\tilde S_n$.
The group $\tilde A_n$ is a double cover of $A_n$ for $n\geq 4$.

The representation theory of double covers $\tilde{S}_n$ and $\tilde{A}_n$ 
was originally developed by Schur~\cite{Schur}. 
We will follow the basic notation and terminology of Stembridge~\cite{Stembridge_Shifted},\cite{Stembridge_Survey}.

In the following two subsections, we discuss basic facts about the conjugacy classes and representation theory of $\tilde S_n$ and $\tilde A_n$.
In particular, we state an analog of Murnaghan--Nakayama rule, called Morris' recursion formula.
\subsection{Morris' Recursion Formula for Spin Characters}
Recall that a \emph{composition} of a positive integer $n$ of length $k$ is a sequence $\lambda=(\lambda_1,\lambda_2,\dots,\lambda_k)$ of positive integers whose sum equals $n$.
We write $\ell(\lambda)=k$ for the length of $\lambda$.
The composition $\lambda$ of $n$ is called a \emph{partition} of $n$ if $\lambda_1\geq \lambda_2\geq \cdots \geq \lambda_k$.
In this case, numbers $\lambda_i$ are called \emph{parts} of $\lambda$ and we write $\lambda\vdash n$.

Let us briefly discuss the description of the conjugacy classes of $\tilde S_n$.
Let $\pi:\tilde S_n\to S_n$ denote the projection map that sends $t_i$ to the simple transposition $(i,i+1)$.
For $\alpha\vdash n$, let $D_\alpha$ denote the conjugacy class of $S_n$ consisting of permutations with cycle type $\alpha$.
The preimage $C_\alpha:=\pi^{-1}(D_\alpha)$ is either a single conjugacy class of $\tilde S_n$ or it splits into two conjugacy classes of equal size.
We say that the cycle type of any element of $C_\alpha$ is $\alpha$.

Denote the set of all partitions of $n$ whose parts are all odd by $\OP_n$.
A partition $\lambda$ of $n$ is called \emph{strict} if all its parts are distinct. The set of all strict partitions of $n$
is denoted by $\DP_n$. Let $\DP^+_n$ (respectively, $\DP^-_n$) denote the set of all partitions $\lambda\in\DP_n$ such that $\lambda$ has an even (respectively, odd) number of even parts.

The following lemma (see \cite[p.\ 172]{Schur}) describes the conjugacy classes of $\tilde S_n$.
\begin{lemma}
\label{lemma:conjugacy_classes_of_double_cover}
Let $\alpha\vdash n$.
Then $C_\alpha$ splits into two conjugacy classes of $\tilde S_n$ of equal size if and only if $\alpha\in\OP_n \cup \DP^-_n$; otherwise, $C_\alpha$ is a single conjugacy class in $\tilde S_n$.
\end{lemma}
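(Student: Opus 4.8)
The statement to prove is Lemma~\ref{lemma:conjugacy_classes_of_double_cover}, characterizing which $C_\alpha$ split into two conjugacy classes of $\tilde S_n$.

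The plan is to work directly with the presentation of $\tilde S_n$ and analyze when a preimage $\tilde g$ of a permutation $g$ with cycle type $\alpha$ is conjugate to $z\tilde g$ inside $\tilde S_n$. Since $C_\alpha = \pi^{-1}(D_\alpha)$ and $\ker \pi = \{1,z\}$, the class $C_\alpha$ is either a single conjugacy class (when $\tilde g \sim z\tilde g$) or the disjoint union of the two classes of $\tilde g$ and $z\tilde g$ (when $\tilde g \not\sim z\tilde g$); in the latter case both classes have the same size because conjugation by any preimage of an element of $S_n \setminus C_{S_n}(g)$ permutes them, and multiplication by $z$ is a size-preserving bijection between them. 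So the entire content is the splitting criterion: $C_\alpha$ splits if and only if no element of $\tilde S_n$ conjugates $\tilde g$ to $z\tilde g$.

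First I would reduce to a convenient choice of preimage. Pick $g \in D_\alpha$ written as a product of cycles on disjoint supports, and choose $\tilde g$ to be the corresponding product of the ``standard'' lifts $\tilde c$ of cycles $c = (a_1\,a_2\,\cdots\,a_m)$ built from the generators $t_i$ along the support. Two facts drive the argument. (i) If $h \in S_n$ centralizes $g$, then any lift $\tilde h$ satisfies $\tilde h \tilde g \tilde h^{-1} = z^{\epsilon(\tilde h)}\tilde g$ for some $\epsilon(\tilde h) \in \{0,1\}$ depending only on $h$ (not on the lift, since $z$ is central), giving a homomorphism $\epsilon\colon C_{S_n}(g) \to \mathbb{Z}_2$; and $C_\alpha$ splits iff $\epsilon$ is trivial, because if some $h$ has $\epsilon(h)=1$ then $\tilde g \sim z\tilde g$, while if $\epsilon \equiv 1$ is impossible then the $\tilde S_n$-class of $\tilde g$ is exactly $\pi^{-1}(D_\alpha)$-half. (Conjugation by lifts of elements moving the support of $g$ keeps us within $\{\tilde g, z\tilde g\}$ only up to the ambiguity already accounted for.) Actually the cleanest route: $C_\alpha$ does \emph{not} split $\iff$ there exists $x \in \tilde S_n$ with $x\tilde g x^{-1} = z \tilde g$ $\iff$ the image $\pi(x)$ centralizes $g$ and $\epsilon(\pi(x)) = 1$, so everything comes down to computing the homomorphism $\epsilon$ on generators of $C_{S_n}(g)$. (ii) The centralizer $C_{S_n}(g)$ is generated by the individual cycles of $g$ together with the permutations that cyclically interchange equal-length cycles; one computes $\epsilon$ on each type of generator by an explicit braid-relation calculation in the presentation.

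The key computations are then: (a) for a single $m$-cycle $c$, its standard lift $\tilde c$ satisfies $\tilde c^m = z^{(m-1)/2}$ when $m$ is odd and one checks that $\tilde c$ commutes with $\tilde c$ trivially, but more importantly that conjugating the standard lift of an $m$-cycle by a power of itself (an element of $C_{S_n}(g)$) is trivial in $\epsilon$ precisely when $m$ is odd, and nontrivial (i.e.\ produces the factor $z$) when $m$ is even — this is where the ``parts all odd'' condition enters; (b) for two cycles of the same even length, the permutation swapping them lifts to an element introducing a factor $z$, whereas for two cycles of the same odd length the swap is $\epsilon$-trivial — this, combined with (a), is why the relevant condition on $\alpha$ turns out to be $\alpha \in OP_n$ or ($\alpha \in DP_n$ with an odd number of even parts). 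Assembling: $\epsilon$ is trivial on all generators of $C_{S_n}(g)$ exactly when every part of $\alpha$ is odd, \emph{or} all parts are distinct and the number of even parts is odd (the even parts each contribute a $z$ from their own cycle via (a), and with distinct parts there are no equal-length swaps to worry about, so the total is $z^{(\#\text{even parts})}$, which is $1$ iff that count is even — hence one gets splitting for $DP^-_n$, matching the sign conventions in the presentation where $t_i^2 = z$); the careful bookkeeping of these $z$-exponents is the main obstacle, and it is exactly the classical computation of Schur which we cite.

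I expect the main obstacle to be the explicit braid-relation bookkeeping of the $z$-exponents: choosing canonical lifts of cycles and of centralizing permutations so that the products telescope cleanly, and verifying the parity count of $z$'s in each case without sign errors. This is precisely the content attributed to Schur (\cite[p.~172]{Schur}), so in the paper we simply invoke that reference rather than reproducing the calculation; the conceptual skeleton above (the homomorphism $\epsilon\colon C_{S_n}(g)\to\mathbb Z_2$, and $C_\alpha$ splits $\iff \epsilon$ is trivial, together with the equal-size claim via multiplication by $z$) is what makes the cited computation yield the stated dichotomy.
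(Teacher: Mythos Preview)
The paper gives no proof of this lemma; it simply cites Schur's original computation. Your conceptual framework is the standard one and is correct: $C_\alpha$ is a single class if and only if $\tilde g$ is conjugate to $z\tilde g$, which reduces to whether the homomorphism $\epsilon\colon C_{S_n}(g)\to\mathbb Z_2$ defined by $\tilde h\,\tilde g\,\tilde h^{-1}=z^{\epsilon(h)}\tilde g$ is trivial; and the equal-size claim follows from multiplication by the central element $z$.

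However, the detailed computations in your sketch contain errors. In (a) you assert that conjugating the lift $\tilde c$ of an $m$-cycle by a power of itself is ``trivial in $\epsilon$ precisely when $m$ is odd, and nontrivial when $m$ is even''. This is false: $\tilde c^k$ commutes with $\tilde c$ for every $k$, so $\epsilon(c^k)=0$ regardless of $m$. The relevant computation for the cycle generators $c_i$ of the centralizer is how $\tilde c_i$ commutes past the \emph{other} cycle lifts: from $t_at_b=zt_bt_a$ for non-adjacent $a,b$ one gets $\tilde c_i\tilde c_j=z^{(k_i-1)(k_j-1)}\tilde c_j\tilde c_i$, hence $\epsilon(c_i)\equiv(k_i-1)\sum_{j\ne i}(k_j-1)\pmod 2$. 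If $e$ denotes the number of even parts of $\alpha$, this gives $\epsilon(c_i)\equiv e-1$ when $k_i$ is even, which vanishes exactly when $e$ is \emph{odd} --- not when $e$ is even, as your sentence ``the total is $z^{(\#\text{even parts})}$, which is $1$ iff that count is even'' would have it (and note that $DP_n^-$ is the case of an odd number of even parts). Moreover, $\epsilon$ must vanish on \emph{every} generator of the centralizer, not accumulate into a single exponent; the swap generators for repeated parts require a separate analysis, and they are precisely what force $C_\alpha$ to fuse when $\alpha$ has a repeated even part even though $e$ may be odd (for instance $\alpha=(4,4,2)$: here $e=3$, so $\epsilon(c_i)=0$ on all three cycles, yet $C_\alpha$ is a single class by the lemma). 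So the skeleton is right, and you correctly note that the paper defers to Schur, but the sketch as written would not establish the dichotomy without correcting these points.
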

We choose a canonical representative for each conjugacy class based on the decomposition of a cycle into a product of transpositions in the symmetric group:
\begin{displaymath}
(k+1,k+2,\ldots,l)=(k+1,k+2)(k+2,k+3)\cdots(l-1,l) \quad \text{for } 0\leq k<l\leq n.
\end{displaymath}
\begin{definition}[Canonical representatives]\label{defn:canonical-rep}
For each composition $\alpha = (\alpha_1,\dotsc,\alpha_l)$ of $n$, define
\begin{displaymath}
	\sigma_j = t_{\alpha_1+\dotsb + \alpha_{j-1}+1} t_{\alpha_1+\dotsb+\alpha_{j-1}+2} \dotsb t_{\alpha_1 + \dotsb + \alpha_{j-1}+\alpha_j-1}
\end{displaymath}
for $j=1,\dotsc,l$.
The image of $\sigma_j$ in $S_n$ is a cycle of length $\alpha_j$.
Define
\begin{displaymath}
\sigma_\alpha=\sigma_1\sigma_2\dotsb \sigma_l.
\end{displaymath}
\end{definition}
The image of $\sigma_\alpha$ in $S_n$ is a permutation with cycle type $\alpha$.
For example, when $\alpha=(4,3,2,1)$, we have $\sigma_\alpha=t_1t_2t_3t_5t_6t_8$.

If $\alpha\in\OP_n\cup \DP^-_n$, set
\begin{displaymath}
\sigma_\alpha^+:=\sigma_\alpha \quad \text{and} \quad \sigma_\alpha^-:=-\sigma_\alpha^+ = -\sigma_\alpha.
\end{displaymath}
By Lemma~\ref{lemma:conjugacy_classes_of_double_cover}, a complete set of representatives for the conjugacy classes in $\tilde S_n$ is given by
\begin{displaymath}
\{\sigma_\alpha \mid \alpha\notin\OP_n\cup\DP_n^-\} \cup \{ \sigma_\alpha^\pm \mid \alpha\in\OP_n\cup\DP_n^-\}.
\end{displaymath}
As was mentioned in Introduction, the irreducible spin representations of $\tilde{S}_n$ are
\begin{displaymath}
	\{(\rho_\lambda, V_\lambda)\mid \lambda\in\DP_n^+\} \cup \{(\rho_\lambda^\pm, V^\pm_\lambda) \mid \lambda\in\DP_n^-\}.
\end{displaymath}
Let $\phi_\lambda$ denote the character of $\rho_\lambda$ for $\lambda\in \DP_n^+$ and the character of $\rho_\lambda^+$ for $\lambda\in \DP_n^-$.
We will also write $\phi_\lambda^\pm$ for the character of $\rho_\lambda^\pm$ when $\lambda\in\DP_n^-$.

We now state an analog of the Murnaghan--Nakayama rule for the double cover of the symmetric group.
There are several versions of the Murnaghan--Nakayama rule for $\tilde S_n$ in the literature, see~\cite{Morris_spin_Canadian_1965,Mac_sym,Hoffman_Humphreys,Bessendrodt_rule_of_morris}.
The version we shall use is the one given in Bessenrodt~\cite{Bessendrodt_rule_of_morris}, which is easy to state and apply.

Given a partition $\lambda\in\DP_n$, a \emph{bar length} of a cell in the shifted Young diagram of $\lambda$ is the hook length of that cell in the double diagram (or shift-symmetric diagram) of $\lambda$.
For example, when $\lambda=(7,5,4)$, the bar lengths of each cell in the shifted Young diagram of $\lambda$ are shown below inside the double diagram of $\lambda$:
\ytableausetup{smalltableaux}
\begin{displaymath}
	\begin{ytableau}
		\empty & 12 & 11 & 7 & 6 & 5 & 4 & 1\\
		\empty & \empty & 9 & 5 & 4 & 3 & 2 \\
		\empty & \empty & \empty & 4 & 3 & 2&1 \\
		\empty & \empty & \empty\\
		\empty & \empty & \empty\\
		\empty & \empty & \empty\\
		\empty
	\end{ytableau}
\end{displaymath}
For any partition $\eta$ of $n$, let $\epsilon_\eta = (-1)^{n-\ell(\eta)}$.

We now explain the process of bar removal.
We will denote by $(i,j)$ the cell in the $i$-th row and $j$-th column of the double diagram of $\lambda$, and its hook length by $h(i,j)$.
We will only consider hook lengths of cells that lie in the shifted diagram of $\lambda$.
If $h(i,j)\leq \lambda_i$, then the bar corresponding to the cell $(i,j)$ consists of the last $h(i,j)$ cells in the $i$-th row of the shifted diagram of $\lambda$.
On the other hand, if $h(i,j)>\lambda_i$, then the number of cells in the hook of $(i,j)$ that lie in the shifted diagram of $\lambda$ is $\lambda_i$ and $h(i,j)=\lambda_i+\lambda_j$.
The bar corresponding to such a cell consists of the cells in the $i$-th row and $j$-th rows of the shifted diagram of $\lambda$.
We denote the bar corresponding to the cell $(i,j)$ by $H(i,j)$.

To remove a bar from a shifted diagram, we simply delete these cells and rearrange the remaining cells to obtain a shifted diagram of a strict partition.
We denote the resulting diagram by $\lambda\setminus H(i,j)$.
The leg length $L(i,j)$ of the associated hook is the number of cells in the hook that lie strictly below the cell $(i,j)$ in the double diagram of $\lambda$.

\begin{theorem}[Morris' Recursion Formula] \label{theorem:spin_Murnaghan_Nakayama_rule}
	Let $\lambda\in\DP_n$ and $\alpha\in\OP_n$. If $l$ is a part of $\alpha$, then
	\begin{displaymath}
		\phi_\lambda(\sigma_\alpha) = \sum\limits_{h(i,j)=l} (-1)^{L(i,j)} 2^{m(i,j)} \phi_{\lambda\setminus H(i,j)}(\sigma_{\alpha\setminus l}),
	\end{displaymath} 
	where
		$m(i,j) = 
		\begin{cases}
			 1 & \text{if } \epsilon_{\lambda\setminus H(i,j)}-\epsilon_\lambda = 1,\\
			 0 & \text{otherwise}.
		\end{cases}$
\end{theorem}
\begin{example}
Let $\lambda=(7,5,4)$.
There are two cells in the double diagram of $\lambda$ with hook length $3$, namely $(2,6)$ and $(3,5)$.
The $3$-bar corresponding to cell $(2,6)$ consists of the last three cells in the second row of the shifted diagram of $\lambda$, while the $3$-bar corresponding to cell $(3,5)$ consists of the last three cells in the third row of the shifted diagram of $\lambda$.

Taking $l=9$, we see that there the cells $(1,4)$ and $(2,3)$ in the double diagram of $\lambda$ have hook length $9$.
The $9$-bar corresponding to cell $(1,4)$ consists of first and fourth rows, while the $9$-bar corresponding to cell $(2,3)$ consists of the second and third rows.
Morris' recursion formula would give, for example,
\begin{displaymath}
	\phi_{(7,5,4)}(\sigma_{(7,5,3,1)}) = - \phi_{(7,4,2)}(\sigma_{(7,5,1)}) +  \phi_{(7,5,1)}(\sigma_{(7,5,1)}).
\end{displaymath}

Taking $l=11$, the cell $(1,3)$ is the unique cell in the double diagram of $\lambda$ with hook length $11$.
The $11$-bar corresponding to cell $(1,3)$ consists of the first and third rows of the shifted diagram of $\lambda$.
Morris' recursion formula would give, for example,
\begin{displaymath}
	\phi_{(7,5,4)}(\sigma_{(11,5)}) = -\phi_{(5)}(\sigma_{(5)}) = -1.
\end{displaymath}
\end{example}
We end this subsection by explicitly describing the remaining character values~\cite{Stembridge_Shifted}.
\begin{lemma}
	\label{lemma:spin_character_values}
	Let $\lambda\in\DP_n$ and $\alpha\vdash n$.
	Then the character value $\phi_\lambda(\sigma_\alpha)$ can be computed by the following rules:
	\begin{enumerate}
		\item $\phi_\lambda(-\sigma)= -\phi_\lambda(\sigma)$ for all $\sigma\in \tilde S_n$.
		\item $\phi_\lambda(\sigma_\alpha)=0$ for all $\alpha\notin\OP_n\cup \DP^-_n$.
		\item For $\alpha\in\OP_n$, we apply Morris' recursion formula.
				In particular, we have $\phi_\lambda(\sigma_\alpha)\in \mathbb{Z}$ for all $\alpha\in\OP_n$.
		\item For $\lambda \in\DP_n^+$ and $\alpha \notin\OP_n$, we have
			$\phi_\lambda(\sigma_\alpha) = 0.$
		\item $\phi_{\lambda}^+(\sigma_\alpha)=(-1)^{\epsilon_\alpha}\phi_{\lambda}^-(\sigma_\alpha)$ for all $\lambda\in \DP^-_n$.
		\item $\phi_\lambda^\pm(\sigma_\alpha) = 
		\begin{cases}
				\pm i^{(n-\ell(\lambda)+1)/2} \sqrt{\tfrac{1}{2}\prod_j \lambda_j} & \text{if }\lambda=\alpha\in\DP_n^-,\\
				0 & \text{if } \lambda\neq \alpha \text{ and } \alpha \notin\OP_n.
		\end{cases}$    
	\end{enumerate}
\end{lemma}

\subsection{Spin Characters of the Alternating Group}\label{section:spin_characters_of_A_n}

We begin with Schur's description of conjugacy classes in $\tilde A_n$.

\begin{lemma}\cite[Theorem 2.7]{Stembridge_Shifted}
	\label{lemma:conjugacy_classes_A_n}
	Let $\alpha$ be a partition of $n$ with $(-1)^{n-\ell(\alpha)}=1$.
	Then the conjugacy class $C_\alpha$ of $\tilde S_n$ splits into two conjugacy classes of $\tilde A_n$ if and only if $\alpha \in\DP_n^+$.
	In particular, if $\alpha\in\OP_n\cap\DP_n^+$, then each of $C_\alpha^\pm$ splits into two conjugacy classes.
\end{lemma}

\begin{notation}
Fix an element $s\in \tilde S_n\setminus \tilde A_n$.
For $\alpha\in\DP_n^+\setminus\OP_n$, write
$
\sigma_\alpha^+ := \sigma_\alpha,
$
$
\sigma_\alpha^- := s\sigma_\alpha s^{-1},
$
as representatives of the two $\tilde A_n$-conjugacy classes
$
C_\alpha = C_\alpha^+ \sqcup C_\alpha^-.
$

For $\alpha\in\OP_n$, we already defined
$
\sigma_\alpha^+ := \sigma_\alpha,
$
$
\sigma_\alpha^- := -\sigma_\alpha.
$
There is no ambiguity in the notation $\sigma_\alpha^-$, since these definitions apply to disjoint cases.

For $\alpha\in\DP_n^+\cap\OP_n$, define
\[
\sigma_\alpha^{++}:=\sigma_\alpha^+,\quad
\sigma_\alpha^{+-}:=s\sigma_\alpha^+s^{-1},\quad
\sigma_\alpha^{-+}:=\sigma_\alpha^-,\quad
\sigma_\alpha^{--}:=s\sigma_\alpha^-s^{-1},
\]
as representatives of $C_\alpha^{++},C_\alpha^{+-},C_\alpha^{-+},C_\alpha^{--}$, where
$
C_\alpha^+ = C_\alpha^{++}\sqcup C_\alpha^{+-},
$
$
C_\alpha^- = C_\alpha^{-+}\sqcup C_\alpha^{--}.
$
\end{notation}

The description of irreducible spin representations of $\tilde A_n$ follows from the standard representation theory of index $2$ subgroup $H$ of any group $G$.
But for the sake of completeness, we give a more explicit description of them.


Let $\sgn$ denote the sign representation of $\tilde S_n$, obtained by composing the sign representation of $S_n$ with $\pi:\tilde S_n\to S_n$.  
For a representation $V$ of $H \leq \tilde S_n$, we denote the representation $(\Res^{\tilde S_n}_{H}\sgn) \otimes V$ by $V'$.

A spin representation $(\rho, V)$ of $\tilde S_n$ is said to be \emph{self-associate} if $V\cong V'$.
For a non self-associate representation $V$, the module $\sgn \otimes V$ is called its \emph{associate}.
It is known that irreducible spin representations indexed by $\DP_n^+$ are self-associate, while those indexed by $\DP_n^-$ are not; see~\cite{Stembridge_Shifted}.
If $\lambda\in\DP_n^-$, then we denote by $V_\lambda^\pm$ modules affording characters $\phi_\lambda^\pm$ from Lemma~\ref{lemma:spin_character_values}, respectively. Their restrictions to $\tilde A_n$ are equal and irreducible.

If $(\rho,V)$ is self-associate, there exists $T\in GL(V)$ such that
$T\rho(t_j)(v)=-\rho(t_j)T(v)$ for all $v\in V$ and $j\in [n-1]$.
Hence, $T^2$ commutes with $\rho(g)$ for all $g\in \tilde S_n$, so by Schur's lemma $T^2$ is scalar. After rescaling, we may assume $T^2=1$.
The operator $T$ is called an \emph{associator} of $V$.
The \emph{difference operator} is
\begin{displaymath}
	\Delta(\sigma)=\text{tr}(T\sigma|_V),\quad \sigma\in \tilde S_n.
\end{displaymath}
By Schur's lemma, $T$ (hence $\Delta$) is unique up to sign.
\begin{lemma}[see {\cite[Lemma 4.1]{Stembridge_Shifted}}]\label{lemma:restriction_to_A_n}
	Let $(\rho,V)$ be a self-associate irreducible representation of $\tilde S_n$, and let $T$ be an associator of $V$.
	Then
	$
	\Res_{\tilde A_n}^{\tilde S_n}V=V^+\oplus V^-,
	$
	$
	V^\pm=\{v\in V\mid T(v)=\pm v\},
	$
	where $V^+$ and $V^-$ are non-isomorphic irreducible representations of $\tilde A_n$.
	Moreover, for $\sigma\in\tilde A_n$,
	$
	\Delta(\sigma)=\phi^+(\sigma)-\phi^-(\sigma),
	$
	where $\phi^\pm$ are the characters of $V^\pm$.
\end{lemma}

The next theorem gives explicit values of the difference character.

\begin{theorem}\label{theorem:Delta_details_double_alternating}\cite[Theorem 7.4]{Stembridge_Shifted}
	Let $\lambda\in\DP_n^+$, and let $\Delta_\lambda$ be the difference operator on $V_\lambda$. Then
	\begin{displaymath}
	\Delta_\lambda(\sigma)= \phi_\lambda^+(\sigma) - \phi_\lambda^-(\sigma) =
	\begin{cases}
	\pm i^{(n-\ell(\lambda))/2} \sqrt{\prod_i \lambda_i} & \text{if the cycle type of } \sigma\text{ is }\lambda,\\
	0 & \text{otherwise}.
	\end{cases}
	\end{displaymath}
\end{theorem}

Fix an associator as in Theorem~\ref{theorem:Delta_details_double_alternating}. Then
\begin{align}
	\phi_\lambda^\pm(\sigma_\lambda) =
	\begin{cases}
		\pm i^{(n-\ell(\lambda))/2}\sqrt{\prod_i \lambda_i}/2 & \text{if }\lambda \in \DP_n^+\setminus\OP_n,\\
		(\phi_\lambda(\sigma_\lambda)\pm i^{(n-\ell(\lambda))/2}\sqrt{\prod_i \lambda_i})/2 & \text{if } \lambda \in\OP_n \cap \DP_n^+.
	\end{cases}
\end{align}

\noindent If $\lambda\in \DP_n^+\setminus\OP_n$,
\begin{align*}
	\phi_\lambda^\pm(\sigma_\lambda^+) = \phi_\lambda^\pm(\sigma_\lambda),
	\qquad
	\phi_\lambda^\pm(\sigma_\lambda^-) = \phi_\lambda^\mp(\sigma_\lambda^+).
\end{align*}
If $\lambda \in \DP_n^+ \cap\OP_n$,
\begin{displaymath}
	\phi_{\lambda}^{\pm} (\sigma_\lambda^{+-}) = \phi_\lambda^\mp(\sigma_\lambda^{++}), \quad
	\phi_{\lambda}^{\pm} (\sigma_\lambda^{-+}) = -\phi_\lambda^\pm(\sigma_\lambda^{++}), \quad
	\phi_{\lambda}^{\pm} (\sigma_\lambda^{--}) = -\phi_\lambda^\mp(\sigma_\lambda^{++}).
\end{displaymath}
\begin{notation}
    For subsets $A,B$ of $\mathbb{C}$, we define $A \times B$ to be $\{ab \mid a\in A, b\in B\}$.
\end{notation}
\begin{lemma}
	\label{lemma:min_same_on_V_+_V_-}
	Let $\lambda\in \DP_n^+$ and $\sigma \in \tilde A_n$ such that the cycle type of $\sigma$ is not $\lambda$.
	Then the minimal polynomial of $\sigma$ on $V_\lambda^+$, $V_\lambda^-$, and $V_\lambda$ is the same.
    Consequently, $\Sp_{V_\lambda}(\sigma T)= \{\pm 1\} \times \Sp_{V_\lambda}(\sigma)$.
\end{lemma}
\begin{proof}
	No element of $\langle \sigma\rangle$ has cycle type $\lambda$: if $(k,|\langle \sigma\rangle|)=1$, then $\sigma^k$ has the same cycle type as $\sigma$; if $(k,|\langle \sigma \rangle|)>1$, then $\sigma^k$ has repeated cycle lengths.
	Hence, by Theorem~\ref{theorem:Delta_details_double_alternating}, we have
	$\phi_\lambda^+(\tau)=\phi_\lambda^-(\tau)=\phi_\lambda(\tau)/2$ for all $\tau\in \langle \sigma\rangle$.
	Therefore, for any linear character $\delta$ of $\langle \sigma\rangle$,
	\begin{displaymath}
		\langle \phi_\lambda^+|_{\langle \sigma \rangle}, \delta \rangle
		=
		\langle \phi_\lambda^-|_{\langle \sigma \rangle}, \delta \rangle
		=
		\tfrac12\langle \phi_\lambda|_{\langle \sigma \rangle}, \delta \rangle.
	\end{displaymath}
	So the same linear characters occur in all three restrictions, and the minimal polynomials coincide.
    Since $T|_{V_\lambda^\pm}=\pm I$, the latter statement follows.
\end{proof}

\section{The Analogue of the Littlewood--Richardson Rule}
\label{section:combinatorics_on_tableaux}
In this section, we describe reduced Clifford products following~\cite{Stembridge_Shifted}, which is an analogue of induction from Young subgroups. We then state an analogue of the Littlewood--Richardson rule for the double cover of the symmetric group. Finally, we prove some corollaries of these constructions that are necessary for using induction in proving our main results.

\subsection{Reduced Clifford Product}
We begin with the following definition.
\begin{definition}[Spin representation]
Let $H$ be a subgroup of $\tilde S_n$. We call a (irreducible) representation $(\rho,V)$ of $H$ a \emph{(irreducible) spin representation} if $-1 \notin H$ or $\rho(-1)=-I$.
\end{definition}

Let $\beta=(\beta_1,\dotsc,\beta_l)$ be a composition of $n$. For $k\in \{1,\dotsc,l\}$, define 
\begin{displaymath}
	J_k=\{j\in [n] \mid \beta_1+\dotsb+\beta_{k-1}<j < \beta_1+\dotsb+\beta_k\} \text{ and } J=\bigcup_{k=1}^l J_k.
\end{displaymath}
Let $\tilde S_\beta$ be the subgroup of $\tilde S_n$ generated by $-1$ and $\{t_j \mid j\in J\}$.
We sometimes write $\tilde S_\beta$ as $\tilde S_{\beta_1}\times_c \tilde S_{\beta_2}\times_c \cdots \times_c \tilde S_{\beta_l}$.
For each $k$, let $\tilde S_{J_k}$ be the subgroup of $\tilde S_\beta$ generated by $-1$ and $\{t_j \mid j\in J_k\}$.
Note that $S_\beta=\pi(\tilde S_{\beta})$ and $S_{J_k}=\pi(\tilde S_{J_k})$ are Young subgroups in $S_n$.
Since $S_\beta=S_{\beta_1} \times S_{\beta_2} \times \cdots \times S_{\beta_l}$, the irreducible representations of $S_\beta$ are tensor products of irreducible representations of $S_{\beta_i}$, $i\in \{1,\dotsc,l\}$.

Since $\tilde S_\beta$ is not a direct product of the $\tilde S_{\beta_i}$ its irreducible representations are not tensor products of irreducible representations of the $\tilde S_{\beta_i}$.
Instead, they are the so-called \emph{reduced Clifford products}.

Let $V_i$ be an irreducible representation of $\tilde S_{\beta_i}$ for each $i=1,\dotsc,l$.
Suppose that among them $r$ representations are self-associate and $s$ are non self-associate, so $r+s=l$.
For simplicity, assume that $V_1, V_2,\dotsc,V_r$ are self-associate, while $V_{r+1}, V_{r+2},\dotsc,V_l$ are non self-associate.
Consider the Clifford algebra
\begin{displaymath}
	\mathscr {C}_s = \mathbb{C}\langle \xi_{r+1},\xi_{r+2},\dotsc,\xi_l \mid \xi_k^2=1, \xi_k\xi_j=-\xi_j\xi_k \text{ for } r+1\leq k,j \leq l, k\neq j\rangle.
\end{displaymath}
Fix an irreducible $\mathscr{C}_s$-module $V$ and associators $T_j\in GL(V_j)$ (as defined in Section~\ref{section:spin_characters_of_A_n}) for each $V_j$ with $j \in [r]$.
The \emph{reduced Clifford product} of $V_1, V_2,\dotsc,V_l$ with respect to $V$ and $\{T_j\}_{j=1}^r$ is the tensor product $V\otimes V_1\otimes V_2\otimes \cdots \otimes V_l$, with $\tilde S_\beta$-module structure defined by
\begin{equation}\label{eq:action_in_reduced_Clifford_product}
	t_j(v\otimes v_1\otimes v_2\otimes \cdots \otimes v_l)=
	\begin{cases}
		v\otimes A_1 v_1\otimes A_2 v_2\otimes \cdots \otimes A_l v_l & \text{if } j\in J_k, 1\leq k \leq r,\\
		\xi_k(v)\otimes B_1 v_1\otimes B_2 v_2\otimes \cdots \otimes B_l v_l & \text{if } j\in J_k, r< k \leq l,
	\end{cases}
\end{equation}
where the operators $A_i$ and $B_j$ are defined as follows:
\begin{align*}
	(A_1,\dotsc,A_l)&=(T_1,\dotsc,T_{k-1},t_j,1,\dotsc,1), \\
	(B_1,\dotsc,B_l)&=(T_1,\dotsc,T_{r},1, \dotsc,1,t_j,1,\dotsc,1).
\end{align*}
In both cases, the operator $t_j$ (with $j\in J_k$) appears in the $k$-th position.
We denote the resulting representation of $\tilde S_\beta$ by $V_1\otimes_c\dots\otimes_c V_l$.
It follows from the definition that the reduced Clifford
products for $\tilde{S_\beta}$ may be 
labeled  by $l$-tuples $\lambda=(\lambda^1,\ldots,\lambda^l)$
of partitions with $\lambda^j\in\DP_{\beta_j}$.
When the number of odd elements of $\lambda$ is
odd, there are two mutually associate reduced Clifford products (see the second case in the following proposition).
\begin{proposition}{\em\cite[Proposition 4.2]{Stembridge_Shifted}}
	\label{prop:reduced_Clifford_product}
	Let $V_1, V_2,\dotsc,V_l$ be irreducible spin representations of $\tilde S_{\beta_1}, \tilde S_{\beta_2},\dotsc,\tilde S_{\beta_l}$, respectively, and let $s$ be the number of non self-associate representations.
	Let $\phi$ be the character of the reduced Clifford product of $V_1, V_2,\dotsc,V_l$ with respect to $V$ and $\{T_j\}_{j=1}^r$.
	For each $j\in [l]$, let $\pi_j \in \tilde S_{\beta_j}$.
	Then the following statements hold true.
	\begin{enumerate}
		\item If $\pi_j$ is even for all $j\in [l]$, then
		\begin{displaymath}
			\phi(\pi_1\pi_2\cdots \pi_l)= 2^{\lfloor s/2 \rfloor}\phi_1(\pi_1)\phi_2(\pi_2)\cdots \phi_l(\pi_l).
		\end{displaymath}
		\item If $s$ is odd and $\Delta_j$ is the difference operator of $V_j$, then
		\begin{displaymath}
			\phi(\pi_1\pi_2\cdots \pi_l)= \pm(2i)^{\lfloor s/2 \rfloor}\Delta_1(\pi_1)\Delta_2(\pi_2)\cdots \Delta_r(\pi_r)\phi_{r+1}(\pi_{r+1})\cdots \phi_l(\pi_l),
		\end{displaymath}
		provided that $\pi_j$ is even for all $j\in [r]$ and $\pi_j$ is odd for all $j\in [r+1,l]$.
		\item In all other cases, $\phi(\pi_1\pi_2\cdots \pi_l)=0$.
	\end{enumerate}
\end{proposition}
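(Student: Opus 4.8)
I would prove this by computing the trace
$\phi(\pi_1\cdots\pi_l)=\operatorname{tr}_{V\otimes V_1\otimes\cdots\otimes V_l}\bigl(\rho(\pi_1\cdots\pi_l)\bigr)$
directly from the defining action of the reduced Clifford product, using that the trace is multiplicative over tensor factors. The first step is to put the action of each factor into a normal form: since $\pi_j$ is a word in $\{\sigma_i : i\in J_j\}$, iterating the defining formulas shows that $\rho(\pi_j)$ acts on $V\otimes V_1\otimes\cdots\otimes V_l$ as the tensor product of a monomial in the Clifford generators on $V$, certain prefactors $T_m$ on the slots preceding the $j$-th (for $j\le r$) or on slots $1,\dots,r$ (for $j>r$), the operator $\rho_j(\pi_j)$ on the $j$-th slot, and the identity elsewhere. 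The key observation is that, because $T_m^2=1$ and each $\xi_k^2$ is a scalar, the Clifford monomial and the prefactor word depend only on the parity $\epsilon_j\in\{0,1\}$ of the image of $\pi_j$ in $S_{\beta_j}$: an even word contributes the trivial Clifford monomial and trivial prefactors, an odd word contributes a single copy of each relevant generator.

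Next I would multiply these normal forms for $j=1,\dots,l$ and collect, in each tensor slot, all operators landing there, using the (anti)commutation relations among the $\sigma_i$, $T_m$ and $\xi_k$ (each pair commutes or anticommutes, as $z$ acts by $-1$). This gives
$$\rho(\pi_1\cdots\pi_l)=\pm\,\Xi\otimes C_1\otimes\cdots\otimes C_l,$$
where $\Xi$ is, up to sign and reordering, the product of those Clifford generators $\xi_k$ with $\epsilon_k=1$, and $C_j=\rho_j(\pi_j)$ for $j>r$, while $C_j=T_j^{\,\sum_{k>j}\epsilon_k}\rho_j(\pi_j)$ (exponent read modulo $2$) for $j\le r$. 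Taking the trace then yields $\phi(\pi_1\cdots\pi_l)=\bigl(\pm\operatorname{tr}_V\Xi\bigr)\prod_{j=1}^l\operatorname{tr}(C_j)$.

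It remains to evaluate the two kinds of factors. For the Clifford factor one uses the standard representation theory of the Clifford algebra on $s$ generators: on its (essentially unique) module $V$, which has dimension $2^{\lfloor s/2\rfloor}$, the trace of a monomial in the $\xi_k$ vanishes unless the monomial collapses to a scalar, giving $\operatorname{tr}_V\Xi=2^{\lfloor s/2\rfloor}$, or $s$ is odd and all $s$ generators appear, so $\Xi$ is the central top element and, normalizing $\Xi^2=(-1)^{\lfloor s/2\rfloor}$, one gets $\operatorname{tr}_V\Xi=\pm(2i)^{\lfloor s/2\rfloor}$. For the $C_j$ factors one uses that $\operatorname{tr}(\rho_j(\pi_j))=\phi_j(\pi_j)$, that a self-associate $\phi_j$ vanishes on odd elements, and that the difference character $\operatorname{tr}(T_j\rho_j(\pi_j))=\Delta_j(\pi_j)$ vanishes on even elements. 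Matching these against the collected data gives exactly the three alternatives: if every $\pi_j$ is even then $\Xi=1$, no $T_j$ survives, and $\phi(\pi_1\cdots\pi_l)=2^{\lfloor s/2\rfloor}\prod\phi_j(\pi_j)$; if $s$ is odd with $\pi_j$ even for $j\le r$ and odd for $j>r$, then $\Xi$ is the top element, each $C_j$ with $j\le r$ is $T_j\rho_j(\pi_j)$, and $\phi(\pi_1\cdots\pi_l)=\pm(2i)^{\lfloor s/2\rfloor}\Delta_1(\pi_1)\cdots\Delta_r(\pi_r)\phi_{r+1}(\pi_{r+1})\cdots\phi_l(\pi_l)$; in every remaining case either $\Xi$ is a non-collapsing, non-top Clifford monomial with $\operatorname{tr}_V\Xi=0$, or some self-associate $\phi_j$ or some $\Delta_j$ is evaluated at an element of the wrong parity, so $\operatorname{tr}(C_j)=0$, and $\phi(\pi_1\cdots\pi_l)=0$.

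The main obstacle I expect is the second step: carefully tracking the signs produced when the $\sigma_i$, $T_m$ and $\xi_k$ are reordered across tensor slots, and checking that the surviving operator on slot $j$ is $+T_j\rho_j(\pi_j)$ rather than $-T_j\rho_j(\pi_j)$, so that the difference characters enter with the normalization in the statement; pinning down the scalar $\Xi^2$, and hence the precise power of $i$ in $(2i)^{\lfloor s/2\rfloor}$, is part of the same sign bookkeeping.
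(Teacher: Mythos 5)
The paper offers no proof of this proposition; it is quoted from Stembridge's Proposition 4.2, so there is no in-text argument to compare against. Your strategy --- normal form for each $\rho(\pi_j)$, slot-by-slot collection using the (anti)commutation of the $\sigma_i$, the $T_m$ and the $\xi_k$, multiplicativity of the trace, and the trace identities for monomials in a Clifford algebra on $s$ generators --- is exactly the standard route, and in outline it does establish all three cases.

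There is, however, one load-bearing slip: you assert that the difference character $\Delta_j(\pi_j)=\operatorname{tr}(T_j\rho_j(\pi_j))$ vanishes on \emph{even} elements. The parity is reversed. Since $T_j\rho_j(g)=\sgn(g)\,\rho_j(g)T_j$, cyclicity of the trace gives $\Delta_j(g)=\sgn(g)\Delta_j(g)$, so $\Delta_j$ vanishes on \emph{odd} elements (just as the self-associate character $\phi_j$ does) and is generally nonzero on even ones --- which is why case (2), where each $\Delta_j$ is evaluated at an even $\pi_j$, is not identically zero; your rule, taken literally, would force it to be. More importantly, the corrected parity is precisely what case (3) needs in the one configuration your write-up does not otherwise kill: $\operatorname{tr}_V\Xi\neq 0$ with $s$ odd and every non-self-associate slot odd, but some self-associate slot carrying an \emph{odd} $\pi_j$. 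Taking $j$ to be the largest such index, the exponent $e_j=\sum_{k>j}\epsilon_k=s$ is odd, so $\operatorname{tr}(C_j)=\pm\Delta_j(\pi_j)$ is a difference character evaluated at an odd element, and the vanishing you need there is exactly $\Delta_j(\mathrm{odd})=0$; your stated rule gives you nothing at that point. With the parity fixed, both $\phi_j$ and $\Delta_j$ annihilate any self-associate slot fed an odd element, every residual case closes, and the remainder of your argument --- including the honest caveat that the global sign and the scalar $\Xi^2$ require careful bookkeeping --- is sound.
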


Note that the sign $\pm$ in the second case depends only on the choice of the Clifford module $V$ and the operators $T_j$, not on the $\pi_j$.

\subsection{The Shifted Littlewood--Richardson Rule}

To state an analogue of the Littlewood--Richardson rule for $\tilde S_n$, we first recall several definitions which are mainly taken from \cite[Section 3]{Stembridge_Survey}.

The \emph{shifted diagram} of shape $\lambda$ consists of cells
\begin{displaymath}
D_\lambda' := \{(i,j)\in \mathbb{Z}^2 \mid i\le j<\lambda_i+i,\ 1\le i\le \ell(\lambda)\}.
\end{displaymath}
Fix the ordered alphabet
\begin{displaymath}
P'=\{1'<1<2'<2<3'<3<\cdots\}.
\end{displaymath}
The letters $1',2',3',\ldots$ are called marked. For $a\in P'$, denote by $|a|$ its unmarked version.

A \emph{shifted tableau} of shape $\lambda$ is a map $T:D_\lambda'\to P'$ such that:
\begin{enumerate}
	\item[(R1)] $T(i,j)\le T(i+1,j)$ and $T(i,j)\le T(i,j+1)$.
	\item[(R2)] At most one $k$ appears in each column ($k=1,2,3,\ldots$).
	\item[(R3)] At most one $k'$ appears in each row ($k'=1',2',3',\ldots$).
\end{enumerate}
A shifted tableau is represented by filling each cell $(i,j)$ of $D_\lambda'$ by the letter $T(i,j)$.
For example, when $\lambda=(4,3,1)$,
\begin{displaymath}
T = \begin{ytableau}
1' & 1 & 1 & 3'\\
\none & 2' & 2 & 3'\\
\none & \none & 3
\end{ytableau}
\end{displaymath}
is a shifted tableau of shape $\lambda$.
The \emph{content} of a shifted tableau $T$ is the sequence $\gamma=(\gamma_1,\gamma_2,\ldots)$, where $\gamma_k$ is the number of boxes $(i,j)\in D_\lambda'$ with $|T(i,j)|=k$.

A \emph{skew shifted diagram} is an array of boxes of the form
\begin{displaymath}
D'_{\lambda/\mu}:=D'_\lambda \setminus D'_\mu
\end{displaymath}
for strict partitions $\lambda$ and $\mu $ with $\mu\subseteq\lambda$.  
A shifted tableau of shape $\lambda/\mu$ is a map $T:D'_{\lambda/\mu}\to P'$ satisfying (R1)--(R3).

Let $w=w_1\cdots w_k$ be a word in the alphabet $P'$.  
Denote by $w^r$ its reverse, $w_k\cdots w_1$, and by $\hat w$ the word obtained by toggling all marks (for example, if $w=3'21$, then $\hat w=32'1'$).

For $i\ge 1$ and $0\le j\le k$, let $n_i(w,j)$ be the number of occurrences of the letter $i$ (not $i'$) among $w_1,\ldots,w_j$, with the convention $n_i(w,0)=0$.

The word of a shifted tableau $T$ is the sequence $w(T)$ obtained by reading the rows of $T$ from left to right, starting from the bottom row.
Its extended word is $e(T)=w^r\hat w$. 
For the preceding example,
\begin{displaymath}
w(T)=32'23'1'113',\quad e(T)=3'111'3'22'33'22'311'1'3.
\end{displaymath}

The tableau $T$ is said to satisfy the \emph{shifted lattice property} if $e=e(T)=e_1\cdots e_{2n}$ satisfies the following condition for all $i>1$ and $0\le j<2n$:
\begin{displaymath}
\text{(SLP)}\quad n_i(e,j)=n_{i-1}(e,j)\ \Longrightarrow\
\begin{cases}
e_{j+1}\neq i,i', & \text{if } 0\le j<n,\\
e_{j+1}\neq i,(i-1)', & \text{if } n\le j<2n.
\end{cases}
\end{displaymath}
For a strict partition $\lambda$, define
\begin{displaymath}
c_\lambda=
\begin{cases}
\sqrt{2}, & \text{if } \lambda \text{ is odd},\\
1, & \text{if } \lambda \text{ is even}.
\end{cases}
\end{displaymath}
\begin{definition}
	\label{lemma:shifted_Littlewood_Richardson_coefficients}
	The shifted Littlewood-Richardson coefficient $f_{\mu\nu}^\lambda$ is the number of shifted tableaux $T$ of shape $\lambda/\mu$ and content $\nu$ such that:
	\begin{itemize}
		\item $T$ satisfies \textup{(SLP)}.
		\item For each $1\le i\le \ell(\nu)$, the leftmost occurrence of $i$ in $|w(T)|$ is unmarked in $w(T)$.
	\end{itemize}
\end{definition}
A tableau is called a Yamanouchi tableau if it satisfies the above two properties.

Shifted Littlewood-Richardson coefficients are the structure constants of the algebra of Schur $Q$-functions~{\cite[Theorem 8.3]{Stembridge_Shifted}}.
The following is the analog of the Littlewood–-Richardson rule for spin characters.
\begin{theorem}{\em\cite[Theorem 8.1]{Stembridge_Shifted}}\label{theorem:Littlewood-Richardson-rule}
	Let $\mu\in \DP_k$, $\nu\in \DP_{n-k}$, and $\lambda\in \DP_n$.
	Let $\phi_\mu\times_c\phi_\nu$ be a reduced Clifford product of $\phi_\mu$ and $\phi_\nu$.
	Then
	\begin{equation}
		\langle \phi_\mu\times_c\phi_\nu,\phi_\lambda\rangle
		=
		\frac{1}{c_\lambda c_{\mu\cup\nu}}\,
		2^{(\ell(\mu)+\ell(\nu)-\ell(\lambda))/2}\,
		f_{\mu\nu}^\lambda,
	\end{equation}
	unless $\lambda$ is odd and $\lambda=\mu\cup\nu$. In that case,
	$\langle \phi_\mu\times_c\phi_\nu,\phi_\lambda^++\phi_\lambda^-\rangle=1$.
	The choices in the construction of the Clifford product determine which of $\phi_\lambda^+$ and $\phi_\lambda^-$ occurs in $\phi_\mu\times_c\phi_\nu$.
\end{theorem}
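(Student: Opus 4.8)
The cleanest route is to pass through Schur's theory of $Q$-functions and the spin characteristic map: first translate the statement into an identity between symmetric functions, and then prove that identity combinatorially by shifted jeu de taquin. So the plan is as follows.

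\textbf{Step 1 (the characteristic map).} First I would set up the graded ring $\Omega=\bigoplus_n R^{-}(\tilde S_n)$ assembled from the spin characters, with product induced by the reduced Clifford product $\times_c$ together with induction $\tilde S_k\times_c \tilde S_{n-k}\uparrow \tilde S_n$, and construct the characteristic isomorphism $\ch\colon\Omega\to\Gamma$ onto the ring $\Gamma$ of symmetric functions generated by the odd power sums $p_1,p_3,p_5,\dots$. The properties I would establish, all of which go back to Schur's character formula for spin representations, are: (i) $\ch$ is a ring homomorphism, so $\ch(\phi_\mu\times_c\phi_\nu)=\ch(\phi_\mu)\ch(\phi_\nu)$; (ii) $\ch(\phi_\lambda)=2^{-\lfloor\ell(\lambda)/2\rfloor}Q_\lambda$, or the analogous normalization, with the value of $\phi_\lambda$ on an odd cycle type $\alpha$ recovered as the $p_\alpha$-coefficient of $Q_\lambda$; and (iii) $\ch$ is an isometry up to explicit scalars, for the inner product on $\Gamma$ making the $p_\alpha$ orthogonal. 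The factors $c_\lambda$, $c_{\mu\cup\nu}$ and the power $2^{(\ell(\mu)+\ell(\nu)-\ell(\lambda))/2}$ in the statement will all come out of comparing these normalizations; throughout, the self-associate ($DP_n^+$) versus non-self-associate ($DP_n^-$) dichotomy must be tracked, since a $\lambda\in DP_n^-$ contributes a single $Q_\lambda$ but a pair $\phi_\lambda^\pm$.

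\textbf{Step 2 (reduction to symmetric functions).} Applying $\ch$ and clearing the normalizing scalars, the claimed equality $\langle\phi_\mu\times_c\phi_\nu,\phi_\lambda\rangle = c_\lambda^{-1}c_{\mu\cup\nu}^{-1}2^{(\ell(\mu)+\ell(\nu)-\ell(\lambda))/2}f_{\mu\nu}^\lambda$ becomes exactly the statement that the structure constants of the $Q$-basis, defined by $Q_\mu Q_\nu=\sum_\lambda f_{\mu\nu}^\lambda Q_\lambda$, are the shifted Littlewood--Richardson numbers, i.e. $f_{\mu\nu}^\lambda$ is the number of shifted Littlewood--Richardson skew tableaux of shape $\lambda/\mu$ and content $\nu$ (marked semistandard fillings whose reverse reading word is lattice/Yamanouchi in the marked alphabet, with the usual tie-breaking conventions on the diagonal). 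Then it remains to prove this purely combinatorial identity.

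\textbf{Step 3 (the combinatorial core) and the main obstacle.} Here I would use $Q_\lambda=\sum_T x^T$ over marked shifted semistandard tableaux $T$ of shape $\lambda$, and likewise $Q_{\lambda/\mu}$ for skew shifted shapes, and prove the skew expansion $Q_{\lambda/\mu}=\sum_\nu f_{\mu\nu}^\lambda Q_\nu$, which is equivalent to the product formula by the self-duality of $\Gamma$ (the coproduct $Q_\lambda\mapsto\sum_\mu Q_\mu\otimes Q_{\lambda/\mu}$). The engine is \emph{shifted jeu de taquin}: define inward and outward slides on skew shifted tableaux, prove rectification is well defined (confluent), and then run the shifted analogue of the Poirier--Reutenauer/Littlewood--Richardson argument — among skew tableaux of shape $\lambda/\mu$ rectifying to the canonical reading tableau of shape $\nu$ the count is by definition the LR number, and independence of rectification order identifies this count with the coefficient of $Q_\nu$ in $Q_{\lambda/\mu}$. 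The $2$-powers and the $c_\lambda$'s reappear as the number of admissible markings of diagonal cells. Finally, the clause "unless $\lambda$ is odd and $\lambda=\mu\cup\nu$" must be handled separately: there the naive formula returns the non-integer $\tfrac12$, so I would argue via Clifford theory that $f_{\mu\nu}^\lambda=1$ (only the column-superstandard filling occurs) and that exactly one of $\phi_\lambda^{+},\phi_\lambda^{-}$ occurs, with multiplicity one, the choice dictated by the sign ambiguity in the reduced Clifford product (equivalently by comparing the difference characters $\Delta_\mu,\Delta_\nu$ on the relevant class). I expect the genuine obstacle to be the well-definedness/confluence of shifted jeu de taquin and the extraction of the structure constants from it: the marked main diagonal spoils naive confluence, so one must adopt the careful slide conventions of Worley/Sagan (or Haiman's shifted mixed insertion), which makes this step substantially harder than in the unshifted case; everything else is conceptual translation together with careful bookkeeping of the powers of $2$ and of the self-associate/non-self-associate split. (An alternative to Step 3 is to bootstrap from a shifted Pieri rule, but verifying the needed consistency is essentially the same jeu-de-taquin content.)
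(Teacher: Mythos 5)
The paper does not prove this statement: it is quoted verbatim as \cite[Theorem 8.1]{Stembridge_Shifted}, so there is no internal proof to compare against. Your outline is essentially the argument of the cited source — pass through the spin characteristic map onto the subring of symmetric functions generated by odd power sums, identify $\ch(\phi_\lambda)$ with a normalized $Q_\lambda$, reduce the multiplicity formula to the statement that the structure constants of the $Q$-basis are the shifted Littlewood--Richardson numbers, and handle the $\lambda=\mu\cup\nu$ odd case by Clifford theory — and you correctly locate the genuine difficulty in the confluence of shifted jeu de taquin (Worley/Sagan), which is also where Stembridge defers to the combinatorial literature. As a reconstruction of the cited proof this is accurate; the only caveat is that Steps 1--3 are a plan rather than a proof, with the bookkeeping of the $c_\lambda$'s, the powers of $2$, and the associate/self-associate split left unexecuted.
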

The commutativity of Schur $Q$-functions implies
\begin{corollary}
	\label{corollary:commutativity_of_shifted_Littlewood_Richardson_coefficients}
	For $\mu\in\DP_k$, $\nu\in\DP_{n-k}$, and $\lambda\in\DP_n$, we have
	\begin{displaymath}
	f_{\mu\nu}^\lambda=f_{\nu\mu}^\lambda.
	\end{displaymath}
\end{corollary}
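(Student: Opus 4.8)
The statement asserts the symmetry $f_{\mu\nu}^\lambda = f_{\nu\mu}^\lambda$ of the shifted Littlewood--Richardson coefficients. As the corollary itself indicates, the cleanest route is not combinatorial but algebraic, via Schur's $Q$-functions. The plan is to recall that the $Q$-functions $Q_\lambda$, indexed by $\lambda\in DP_n$, form a basis (over $\mathbb{Z}[1/2]$, or after suitable normalization) of the relevant subring of symmetric functions, and that by Theorem~\ref{theorem:Littlewood-Richardson-rule} together with Lemma~\ref{lemma:shifted_Littlewood_Richardson_coefficients}, the coefficients $f_{\mu\nu}^\lambda$ are (up to the explicit powers of $2$ and the factors $c_\lambda$, $c_{\mu\cup\nu}$ recorded there) precisely the structure constants expressing the product $Q_\mu \cdot Q_\nu$ in the $Q_\lambda$ basis. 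Concretely, one has an expansion $Q_\mu Q_\nu = \sum_{\lambda} g_{\mu\nu}^\lambda Q_\lambda$ where $g_{\mu\nu}^\lambda$ differs from $f_{\mu\nu}^\lambda$ only by factors that are manifestly symmetric in $\mu$ and $\nu$ (since $c_{\mu\cup\nu}$, $l(\mu)+l(\nu)$, and $\mu\cup\nu$ are all unchanged under swapping $\mu$ and $\nu$).

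First I would make precise the dictionary between $f_{\mu\nu}^\lambda$ and the $Q$-function structure constants, citing Stembridge and Hoffman--Humphreys for the identification of the reduced Clifford product of spin characters with multiplication of $Q$-functions under the characteristic map. Next I would invoke the commutativity of the ring of symmetric functions: since $Q_\mu Q_\nu = Q_\nu Q_\mu$, the structure constants satisfy $g_{\mu\nu}^\lambda = g_{\nu\mu}^\lambda$ for every $\lambda$. Finally I would translate this back through the dictionary: because the normalizing factor relating $g$ and $f$ in Theorem~\ref{theorem:Littlewood-Richardson-rule} depends only on $c_\lambda$, $c_{\mu\cup\nu}$, and $2^{(l(\mu)+l(\nu)-l(\lambda))/2}$ — each symmetric under interchange of $\mu$ and $\nu$ — the equality $g_{\mu\nu}^\lambda = g_{\nu\mu}^\lambda$ forces $f_{\mu\nu}^\lambda = f_{\nu\mu}^\lambda$. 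One caveat to handle: the exceptional clause in Theorem~\ref{theorem:Littlewood-Richardson-rule} when $\lambda$ is odd and $\lambda = \mu\cup\nu$; there the multiplicity is $0$ or $1$ depending on a choice of associates, but the condition $\lambda=\mu\cup\nu$ is itself symmetric in $\mu,\nu$, so the argument is unaffected (and in any case this clause concerns the spin-character multiplicity, not $f_{\mu\nu}^\lambda$ directly, which is still governed by the combinatorial Lemma~\ref{lemma:shifted_Littlewood_Richardson_coefficients}).

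The main obstacle, such as it is, is purely bookkeeping: one must be careful that the "coefficient $f_{\mu\nu}^\lambda$" appearing in Lemma~\ref{lemma:shifted_Littlewood_Richardson_coefficients} is genuinely the same object as the $Q$-function structure constant up to the stated symmetric normalization, rather than, say, differing by a factor like $2^{l(\mu)}$ that would break the symmetry. Once the normalizations are pinned down against the standard reference (Hoffman--Humphreys, or Macdonald's treatment of $Q$-functions), no further work is needed, since commutativity of symmetric-function multiplication is immediate. An alternative, fully combinatorial proof — constructing an explicit bijection between shifted LR tableaux of shape $\lambda/\mu$ and type $\nu$ and those of shape $\lambda/\nu$ and type $\mu$ — is possible but considerably more delicate (it is the shifted analogue of the classical commutativity bijection for LR tableaux), and is exactly the kind of argument the corollary's preamble is telling us to sidestep.
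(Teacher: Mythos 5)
Your argument is correct and follows exactly the route the paper intends: the paper gives no written proof but explicitly attributes the corollary to the commutativity of Schur's $Q$-functions (citing Hoffman--Humphreys), which is precisely your identification of $f_{\mu\nu}^\lambda$ with the $Q$-function structure constants up to normalization factors that are symmetric in $\mu$ and $\nu$. Your additional care about the normalization and the exceptional clause of Theorem~\ref{theorem:Littlewood-Richardson-rule} is sound bookkeeping but does not change the approach.
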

\begin{lemma}\label{lemma:most_dominant_alpha_beta}
	Let $n=m+k$ for some positive integers $m$ and $k$.
	If $\lambda\in\DP_n$ and $\mu\in\DP_m$ with $\mu\subseteq \lambda$, then there exists a strict partition $\nu$ of $k$ such that $f_{\mu\nu}^\lambda>0$.
\end{lemma}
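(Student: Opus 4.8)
The plan is to show that the shifted skew shape $\lambda/\mu$ admits a shifted tableau satisfying the two conditions in Lemma~\ref{lemma:shifted_Littlewood_Richardson_coefficients}, for \emph{some} type $\nu\vdash k$; then $f_{\mu\nu}^\lambda>0$ by that lemma. Since $\mu$ is the dominant partition of $m$ contained in $\lambda$, the cells of $\lambda/\mu$ are exactly the last $k$ cells of the shifted diagram of $\lambda$ in the row-by-row reading order, and $\lambda/\mu$ is a genuine shifted skew shape (this is where dominance of $\mu$ is used: $\mu\subseteq\lambda$ and the complement is connected-from-the-end in the reading order). First I would make the key structural observation: because $\mu$ is dominant, the skew shape $\lambda/\mu$ occupies a contiguous ``tail'' of rows of $\lambda$; more precisely there is an index $r$ such that rows $1,\dots,r-1$ of $\lambda/\mu$ are empty, row $r$ is a (possibly empty on the left) right-justified segment within row $r$ of $\lambda$, and rows $r+1,\dots,l(\lambda)$ are entirely contained in $\lambda/\mu$. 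In particular $\lambda/\mu$ has no two cells in the same column beyond what the shifted shape forces, and crucially each column of $\lambda/\mu$ is an interval.

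Next I would construct the tableau explicitly. The natural candidate is the \emph{superstandard} (column-superfilling) tableau: fill the cells of $\lambda/\mu$ so that reading the cells row by row from the bottom row upward, and within each row left to right, the entries are weakly increasing, and assign the smallest available value to each new cell subject to the shifted-tableau monotonicity (weakly increasing along rows and down columns, with the marked/unmarked diagonal convention of Stembridge). Equivalently: put $1$'s in the lowest nonempty row, $2$'s in the next, and so on, but split a value across a column break as forced by strictness down columns. One checks that with $\lambda/\mu$ of the above ``tail'' form, this filling is a legitimate shifted tableau, and its reading word $w$ (rows from bottom, left to right) is a lattice word: at every prefix the number of $i$'s is at least the number of $(i+1)$'s, because the $i$'s are introduced in an earlier (lower) row than the $(i+1)$'s and are at least as numerous row-by-row once we account for the column strictness. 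The second condition — that the leftmost occurrence of each $i$ in $|w|$ is unmarked — is arranged by the standard convention of marking only the "non-first-in-its-column-from-the-diagonal" entries; I would verify that the leftmost $i$ sits at the start of its row (the lowest row containing an $i$) and hence is unmarked. The resulting type $\nu$ is then the content vector of this filling, a partition of $k$ (weakly decreasing since lower rows are longer or the column splits only shorten higher values), so $\nu\vdash k$ and $f_{\mu\nu}^\lambda\ge 1$.

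The main obstacle I anticipate is the bookkeeping at the single ``partial'' row $r$ and the diagonal cells: the shifted setting forces strict decrease down columns and imposes the marking convention on the main diagonal, so the naive ``row $j$ gets value $j'$'' filling can violate column-strictness where $\lambda/\mu$ meets the staircase boundary, and one must split values or shift indices there. Handling this cleanly — showing the greedy column-strict, lattice, diagonal-unmarked filling always exists for a ``tail'' skew shape — is the real content; everything else is a direct appeal to Lemma~\ref{lemma:shifted_Littlewood_Richardson_coefficients}. An alternative, perhaps slicker, route that avoids explicit tableau construction is to argue by induction on $k$: remove the last cell $c$ of the shifted diagram of $\lambda$ (in reading order) to get $\lambda^-\in DP_{n-1}$ whose dominant $m$-subpartition is still $\mu$; apply the inductive hypothesis to get $\nu^-\vdash k-1$ with $f_{\mu\nu^-}^{\lambda^-}>0$, and then use Corollary~\ref{corollary:commutativity_of_shifted_Littlewood_Richardson_coefficients} together with the branching/Pieri rule for $\phi_\lambda$ restricted along $\tilde S_{n-1}\times_c\tilde S_1$ (i.e.\ the shifted Pieri rule, a special case of Theorem~\ref{theorem:Littlewood-Richardson-rule} with one part) to add a cell and produce a $\nu\vdash k$ with $f_{\mu\nu}^\lambda>0$; the induction base $k=1$ is exactly the shifted Pieri rule. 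I would present the inductive argument as the main proof and relegate the tableau description to a remark, since the induction sidesteps the diagonal-marking subtleties.
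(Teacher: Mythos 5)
Your first (tableau-construction) route misses the point on which the paper's entire proof turns: the type $\nu$ you produce must be a \emph{strict} partition of $k$, i.e.\ $\nu\in DP_k$, since $f_{\mu\nu}^\lambda$ only feeds into Theorem~\ref{theorem:Littlewood-Richardson-rule} (and the subsequent restriction statements) when $\nu$ indexes a spin representation, and the remark following the lemma shows that such a $\nu$ need not exist for an arbitrary $\mu\subset\lambda$. The paper's greedy filling (each column of $\lambda/\mu$ filled $1,2,\dots$ from the top) already satisfies the two conditions of Lemma~\ref{lemma:shifted_Littlewood_Richardson_coefficients}, but its content is typically \emph{not} strict --- in the paper's own example $\lambda=(7,5,4,3,2,1)$, $\mu=(7,1)$ the content is $(4,4,3,2,1)$ --- and the real work is the two-case modification procedure (introducing primed entries to move cells from value $i+1$ to value $i$, cascading along the content vector) that repairs strictness while preserving the lattice and ``leftmost occurrence unmarked'' conditions. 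You declare victory as soon as the content is weakly decreasing, so the argument stops exactly where the difficulty begins. You have also inverted the conventions: in a shifted tableau entries weakly increase \emph{down} columns (unprimed letters strictly so), so a filling with $1$'s in the bottom row and $2$'s above it is not a tableau at all, and your verification of the lattice property rests on this inverted picture.

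Your alternative inductive route (peel off the last cell of $\lambda$ to get $\lambda^-$, apply the hypothesis there, then add the cell back with the one-box Pieri/branching rule) is genuinely different from the paper's argument and is attractive precisely because the Pieri expansion of $\phi_{\nu^-}\times_c\phi_{(1)}$ involves only strict partitions, so strictness of $\nu$ comes for free. But as written it is only a sketch: you would need to justify associativity and commutativity of the reduced Clifford product at the level of characters, track associates and the exceptional case $\lambda=\mu\cup\nu$ with $\lambda$ odd in Theorem~\ref{theorem:Littlewood-Richardson-rule} when translating between $f_{\mu\nu}^\lambda>0$ and $\langle\phi_\mu\times_c\phi_\nu,\phi_\lambda\rangle>0$, and record that removing the last cell keeps $\lambda^-$ strict and leaves the dominant $m$-subpartition equal to $\mu$ (both true, neither stated). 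Until those steps are supplied, neither route constitutes a proof.
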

\begin{proof}
	Let $\lambda\in\DP_n$ and $\mu\in\DP_m$ be a partition of $m$ contained in $\lambda$.
	Recall that $Q_{\lambda/\mu}=\sum_{\nu} f_{\mu\nu}^\lambda Q_\nu$ (see~\cite{Mac_sym}).
	In order to prove that $f_{\mu\nu}^\lambda\neq 0$ for some $\nu$, it suffices to show that $Q_{\lambda/\mu}$ is nonzero.
	It turns out that $Q_{\lambda/\mu} = \sum_{T} 2^{b(T)} x^T$, where the sum is over shifted tableaux of shape $\lambda/\mu$, and $b(T)$ is the number of border strips into which $T$ is decomposed.

	Clearly, there is at least one shifted tableau of shape $\lambda/\mu$, obtained by filling each column of the shifted skew shape $\lambda/\mu$ with the entries $1,2,\dots$ from top to bottom, so that the entries are strictly increasing down each column, starting with $1$.
	Hence, $Q_{\lambda/\mu}$ is nonzero.
\end{proof}
\begin{lemma}\label{lemma:combinatorial_lemma_for_double_A_n}
Let $\lambda\in \DP_n$ with $\ell(\lambda)\ge 3$, and let $p\leq \lambda_1$.
Then there exists $\alpha \in \DP_{n-p}$ such that
$f_{(p)\alpha}^{\lambda}>0$ and $\alpha \neq (n-p)$.
If $p>2$ and $n>6$, then there exists $\beta \in \DP_{n-p}$ such that
$f_{(p-1,1)\beta}^{\lambda}>0$ and $\beta\neq (n-p)$.
\end{lemma}

\begin{proof}
Since $p\leq \lambda_1$ and $\ell(\lambda)\geq 2$, the shifted diagram of $\lambda$ contains $(p)$ and $(p-1,1)$.
By Lemma~\ref{lemma:most_dominant_alpha_beta}, there exist strict partitions
$\alpha,\beta\in\DP_{n-p}$ such that $f_{(p)\alpha}^{\lambda}>0 $ and $f_{(p-1,1)\beta}^{\lambda}>0.$
If neither $\alpha$ nor $\beta$ is equal to $(n-p)$, there is nothing to prove.

Suppose first that $\alpha=(n-p)$.
Then there exists a shifted tableau $T$ of shape $\lambda/(p)$ with content
$(n-p)$.
Hence every entry of $T$ is either $1'$ or $1$.
By conditions (R1)--(R3), every cell of $T$, except possibly the first cell in
the second row, must contain $1$.
Since $\ell(\lambda)\ge3$, the tableau has a third row, whose entries must be
strictly greater than $1$, a contradiction.
Therefore, $\alpha\neq(n-p)$.

Now suppose that $\beta=(n-p)$.
Then there exists a shifted tableau of shape $\lambda/(p-1,1)$ with content
$(n-p)$.
Since $\ell(\lambda)\ge3$, conditions (R1)--(R3) imply that $\lambda_3=1$.
Depending on whether the $p$-th column of $\lambda/(p-1,1)$ consists of one or
two cells, we obtain one of the two tableaux shown below.
Hence, $f_{(p-1,1)(n-p-1,1)}^\lambda>0.$

\ytableausetup{boxsize=1em}
\[
\begin{ytableau}
*(blue) & *(blue) & *(blue) & *(blue) & 1' & 1 & 1 & 1\\
\none & *(blue) & 1 & 1 & 1\\
\none & \none & 2
\end{ytableau}
\hspace{4cm}
\begin{ytableau}
*(blue) & *(blue) & *(blue) & *(blue) & 1 & 1 & 1 & 1\\
\none & *(blue) & 1 & 1\\
\none & \none & 2
\end{ytableau}
\]
\end{proof}

\begin{lemma}\label{lemma:(3)_(2,1)_one_different_from_basic}
    Let $\lambda \in \DP_n$ with $\lambda_2\geq 2$ and $n\geq 7$.
    Then there exist partitions $\mu,\nu \in \DP_{n-3}\setminus\{(n-3)\}$ such that
    $f_{(3)\mu}^{\lambda}>0$ and $f_{(2,1)\nu}^{\lambda}>0$.
\end{lemma}

\begin{proof}
Since $\ell(\lambda)\ge2$, the shifted diagram of $\lambda$ contains both $(3)$ and $(2,1)$.
By Lemma~\ref{lemma:most_dominant_alpha_beta}, there exist partitions
$\mu,\nu\in\DP_{n-3}$ such that
$f_{(3)\mu}^{\lambda}>0$ and $f_{(2,1)\nu}^{\lambda}>0$.
If neither $\mu$ nor $\nu$ is equal to $(n-3)$, there is nothing to prove.

Suppose first that $\mu=(n-3)$.
Then there exists a skew shifted Young tableau of shape $\lambda/(3)$ with content $(n-3)$.
Conditions (R1)--(R3) imply that $\lambda_3=1$ and $\lambda_2\leq3$, so we find that $\lambda=(n-2,2)$ or $(n-3,3)$.
In either case, changing the rightmost entry in the second row from $1$ to $2$ yields, $f_{(3)(n-4,1)}^{\lambda}>0,$ as illustrated in the first two diagrams below.

\ytableausetup{boxsize=1em}
\[
\begin{ytableau}
*(blue) & *(blue) & *(blue) & 1' & 1 & 1 & 1\\
\none 1 & 1 & 1 & 2
\end{ytableau}
\hspace{3cm}
\begin{ytableau}
*(blue) & *(blue) & *(blue) & 1 & 1 & 1 & 1\\
\none 1 & 1 & 2
\end{ytableau}
\hspace{3cm}
\begin{ytableau}
*(blue) & *(blue) & 1 & 1 & 1 & 1\\
\none & *(blue) & 2
\end{ytableau}
\]

Now suppose that $\nu=(n-3)$.
Then there exists a skew shifted Young tableau of shape $\lambda/(2,1)$ with content $(n-3)$.
Again, conditions (R1)--(R3) imply that $\lambda=(n-2,2)$.
The third diagram implies that $f_{(2,1)(n-4,1)}^{\lambda}>0.$
This completes the proof.
\end{proof}

\subsection{Spectra in the Reduced Clifford Product}
For any operator $g$ on a vector space $V$, let $\Sp_V(g)$ denote the set of eigenvalues of $g$ in $V$.
When $V=V_\lambda$ for $\lambda\in\DP_n$, we simply write $\Sp_\lambda(g)$ for $\Sp_{V_\lambda}(g)$.
For $\pi\in \tilde S_n$, define $d(\pi)=0$ if $\pi$ is even and $d(\pi)=1$ if $\pi$ is odd.
\begin{lemma}\label{lemma:spectrum_module_with_associator}
Let $U$ be a self-associate irreducible representation of $\tilde S_n$ and $T\in GL(U)$ be an associator for $U$. For each $\pi\in \tilde{S}_n$,
\begin{enumerate}
\item If $d(\pi)=0$, then $\Sp_U(\pi T)\subseteq\Sp_U(\pi)\cup-\Sp_U(\pi)$.
Moreover for each $\eta\in \Sp_U(\pi)$, at least one of $\pm\eta$ belongs to $\Sp_U(\pi T)$.
\item If $d(\pi)=1$, then $\Sp_U(\pi T)=\{\pm i\}\times \Sp_U(\pi)$.
\end{enumerate}
\end{lemma}
\begin{proof}
	Let $U = E_1(T) \oplus E_{-1}(T)$ be the decomposition of $U$ into the eigenspaces of $T$ corresponding to the eigenvalues $1$ and $-1$, respectively.
	Suppose that $d(\pi)=0$.
	Then $\pi T= T \pi$.
	Therefore, the eigenspaces of $T$ are invariant subspaces for $\pi$.
	Hence, we have the following decomposition for $U$ as a $\langle \pi \rangle$-module:
	\begin{align*}
		U &= \bigoplus_{\eta \in\Sp_U(\pi)} (E_1(T) \cap E_\eta(\pi)) \oplus (E_{-1}(T) \cap E_\eta(\pi)).
	\end{align*}
	Hence, the eigenvalues of $\pi T$ are $\pm \eta$, where $\eta$ runs through the eigenvalues of $\pi$ in $U$.

	Suppose that $d(\pi)=1$.
	Then $\pi T = - T \pi$.
	Let $u\in U$ be an eigenvector of $\pi$ corresponding to the eigenvalue $\eta$. It is easy to see that $Su$ is an eigenvector of $\pi$ corresponding to the eigenvalue $-\eta$, so $u$ and $T u$ are linearly independent vectors. Therefore, we find that the subspace $W$ generated by $u$ and $Tu$ is an invariant subspace for $\pi$ and $T$.
	With respect to the basis $\{u, Tu\}$ of $W$, we have
	\begin{align*}
		\pi T|_W &= \begin{pmatrix} 0 & \eta \\ -\eta & 0 \end{pmatrix}.
	\end{align*}
	Therefore, the eigenvalues of $\pi T$ on $W$ are $\pm i \eta$.
	This completes the proof of the lemma.
\end{proof}


\begin{lemma}\label{lemma:spectra_of_product_of_two_modules}
Suppose that $V_1$ and $V_2$ are irreducible spin representations for $\tilde S_m$ and
$\tilde S_k$ respectively, with $m+k=n$, $\pi_1\in\tilde S_m$ and $\pi_2\in\tilde S_k$.
Let $d_1=d(\pi_1)$, and $d_2=d(\pi_2)$.
Then $\Sp_{V_1\otimes_c V_2}(\pi_1\pi_2)$ is given by Table~\ref{table:product_of_two}, where $T_1$ is an associator for $V_1$.
If $V_1$ is self-associate but $V_2$ is not, then $\Sp_{V_1\otimes_c V_2'}(\pi_1\pi_2)=\Sp_{V_1\otimes_c V_2}(\pi_1\pi_2)$ unless $(d_1,d_2)=(0,1)$, in which case $\Sp_{(V_1\otimes_c V_2)\oplus (V_1\otimes_c V_2')}=\{\pm 1\}\times\Sp_{V_1}(\pi_1)\times\Sp_{V_2}(\pi_2)$.
\end{lemma}
\begin{table}[h]
\centering
\renewcommand{\arraystretch}{1.2}
\scalebox{0.92}{
\begin{tabular}{|c|ccc|}
\hline
  & \text{$V_1$ self-associate} &   \text{$V_1$ not self-associate} &  \text{$V_1$ self-associate} \\
$(d_1,d_2)$  & \text{$V_2$ self-associate} &  \text{$V_2$ not self-associate} &  \text{$V_2$ not self-associate}\\ \hline
$(0,0)$ & $\Sp_{V_1}(\pi_1)\times\Sp_{V_2}(\pi_2)$ & $\Sp_{V_1}(\pi_1)\times\Sp_{V_2}(\pi_2)$ & $\Sp_{V_1}(\pi_1)\times\Sp_{V_2}(\pi_2)$\\
$(1,0)$ & $\Sp_{V_1}(\pi_1)\times\Sp_{V_2}(\pi_2)$  & $\{\pm1\}\times\Sp_{V_1}(\pi_1)\times\Sp_{V_2}(\pi_2)$ &  $\Sp_{V_1}(\pi_1)\times\Sp_{V_2}(\pi_2)$\\
$(0,1)$ & $\Sp_{V_1}(\pi_1)\times\Sp_{V_2}(\pi_2)$ & $\{\pm1\}\times\Sp_{V_1}(\pi_1)\times\Sp_{V_2}(\pi_2)$ & $\Sp_{V_1}(\pi_1T_1)\times\Sp_{V_2}(\pi_2)$\\
$(1,1)$ & $\{\pm i\} \times \Sp_{V_1}(\pi_1)\times\Sp_{V_2}(\pi_2)$  & $\{\pm{i}\}\times\Sp_{V_1}(\pi_1)\times\Sp_{V_2}(\pi_2)$ & $\{\pm{i}\}\times\Sp_{V_1}(\pi_1)\times\Sp_{V_2}(\pi_2)$\\
\hline
\end{tabular}
}
\caption{$\Sp_{V_1\otimes_c V_2}(\pi_1\pi_2)$}\label{table:product_of_two}
\end{table}
\begin{proof}
Suppose that $V_1$ and $V_2$ are self-associate.
Fix associators $T_1\in GL(V_1)$ and $T_2\in GL(V_2)$.
By the definition of the reduced Clifford product, $(\pi_1\pi_2)(x\otimes u\otimes v)=x\otimes \pi_1T_1^{d_2}(u)\otimes\pi_2(v)$. If $d_2=0$, then
this formula simplifies to $(\pi_1\pi_2)(x\otimes u\otimes v)=x\otimes \pi_1(u)\otimes\pi_2(v)$,
so we get that $\Sp_W(\pi_1\pi_2)=\Sp_{V_1}(\pi_1)\times\Sp_{V_2}(\pi_2)$. If $(d_1,d_2)=(0,1)$,
we can swap the roles of $V_1$ and $V_2$ by looking at the representation
$(\pi_1\pi_2)(x\otimes u\otimes v)=x\otimes \pi_1(u)\otimes T_2^{d_1}\pi_2(v)$.
By Proposition~\ref{prop:reduced_Clifford_product}, this representation has the same character as the initial one. Therefore, once again $\Sp_W(\pi_1\pi_2)=\Sp_{V_1}(\pi_1)\times\Sp_{V_2}(\pi_2)$. Finally, if $(d_1,d_2)=(1,1)$, the spectrum of $\pi_1T_1$ is equal to $\{\pm i\}\times\Sp_{V_1}(\pi_1)$ by Lemma~\ref{lemma:spectrum_module_with_associator}, which completes the proof in this case.

Suppose that $V_1$ is self-associated and $V_2$ is not.
The Clifford algebra $\mathcal{C}_1$ generated by $\xi_1$, and we may assume that $\xi_1\mapsto 1$ for the $\mathcal{C}_1$-module $V$. We also fix an associator $T_1\in GL(V_1)$ for $V_1$. By definition of $V_1\otimes_cV_2$  (or $V_1\otimes_cV_2'$), we
 see that $(\pi_1\pi_2)(x\otimes u\otimes v)=x\otimes \pi_1T_1^{d_2}(u)\otimes\pi_2(v)$.
 This implies that if $d_2=0$, then
 $\Sp_{V_1\otimes_c V_2}(\pi_1\pi_2)=\Sp_{V_1}(\pi_1)\times\Sp_{V_2}(\pi_2)$.
If $d_2=1$, then
 $(\pi_1\pi_2)(x\otimes u\otimes v)=x\otimes\pi_1T_1(u)\otimes\pi_2(v)$. 
 Now Lemma~\ref{lemma:spectrum_module_with_associator} implies that 
 $\Sp_{V_1\otimes_c V_2}(\pi_1\pi_2)=\{\pm i\}\times\Sp_{V_1}(\pi_1)\times\Sp_{V_2}(\pi_2)$ if $d_1=1$
 and $\Sp_{(V_1\otimes_c V_2)\oplus(V_1\otimes_c V_2')}(\pi_1\pi_2)=\{\pm 1\}\times\Sp_{V_1}(\pi_1)\times\Sp_{V_2}(\pi_2)$ if $d_1=0$.

Suppose that $V_1$ and $V_2$ are non self-associate.
We may assume that the $\mathcal{C}_2$-module $V$ is given by
$$\xi_1\mapsto\left(\begin{matrix} 0 & 1 \\ 1 & 0\end{matrix}\right), \xi_2\mapsto\left(\begin{matrix} 0 & i \\ -i & 0\end{matrix}\right),$$ so
 $\xi_1\xi_2\mapsto\left(\begin{matrix} -i & 0 \\ 0 & i\end{matrix}\right)$. By the definition of $V_1\otimes_c V_2$, we
 see that $(\pi_1\pi_2)(x\otimes u\otimes v)=\xi_1^{d_1}\xi_2^{d_2}x\otimes \pi_1(u)\otimes\pi_2(v)$.
 Then $\Sp_{V_1\otimes_c V_2}(\pi_1\pi_2)=\Sp(\xi_1^{d_1}\xi_2^{d_2})\times \Sp_{V_1}(\pi_1)\times\Sp_{V_2}(\pi_2)$.
 The spectra of $\xi_1$, $\xi_2$, $\xi_1\xi_2$ are $\{\pm1\}$, $\{\pm1\}$, and $\{\pm i\}$, respectively. Therefore, we get the spectrum as in the corresponding cell in Table~\ref{table:product_of_two}.
 \end{proof}

\begin{remark}
    Since $V_1\otimes_c V_2$ and $V_2\otimes_c V_1$ correspond to equivalent representations of $\tilde{S}_m\times_c\tilde{S}_k$, the statement of Lemma~\ref{lemma:spectra_of_product_of_two_modules} covers all cases.
\end{remark}
The following result will be useful on many occasions in the upcoming proofs.
For any positive integer $n$, define $\Omega(n,\pm 1) = \{x \in \mathbb{C} : x^n = \pm 1\}$.
\begin{proposition}\label{proposition:product_of_two_sets_cardiality_in_complex_numbers}
Let $a,b$ be positive integers and $c,d \in \{\pm 1\}$. 
Then $\Omega(a,c) \times  \Omega(b,d)$ has exactly $\operatorname{lcm}(a,b)$ elements.
\end{proposition}
\begin{proof}
Fix $\zeta, \xi \in \mathbb{C}$ such that $\zeta^a = c$ and $\xi^b = d$.
Let $l = \lcm(a,b)$.
Then it is easy to see that $\Omega(a,c) = \{\zeta\} \times \Omega(a,1)$ and similarly, $\Omega(b,d) = \{\xi\} \times \Omega(b,1)$.

It is well-known that $\Omega(a,1) \times \Omega(b,1) = \Omega(l,1)$. Therefore
\begin{displaymath}
\Omega(a,c) \times \Omega(b,d) = \{\zeta\} \times \Omega(a,1) \times \{\xi\} \times \Omega(b,1) = \{\zeta\xi\} \times \Omega(l,1).
\end{displaymath}
Hence, $\Omega(a,c) \times \Omega(b,d)$ has exactly $l$-many elements.
\end{proof}
 
 Recall that, for $g\in G$, $o(g)$ denotes the order of the image of $g$ in $G/Z(G)$.
 \begin{corollary}\label{corollary:product_of_two}
Suppose that $V_1$ and $V_2$ are irreducible spin representations for $\tilde S_m$ and
$\tilde S_k$ respectively, with $m+k=n$, $\pi_1\in\tilde S_m$ and $\pi_2\in\tilde S_k$ are such that $|\Sp_{V_i}(\pi_i)|=o(\pi_i)$ for $i=1,2$, then $|\Sp_{V_1\otimes_c V_2}(\pi_1\pi_2)|=o(\pi_1\pi_2)$.
\end{corollary}
\begin{proof}
Suppose that $\pi_i$ has $o(\pi_i)$ distinct eigenvalues in $V_i$ for $i=1,2$. By Lemma~\ref{lemma:spectra_of_product_of_two_modules}, we know that either 
$\Sp_{V_1\otimes_cV_2}(\pi_1\pi_2)=M\times\Sp_{V_1}(\pi_1)\times \Sp_{V_2}(\pi_2)$, where $M\in\{\{1\},\{\pm1\},\{\pm i\}\}$, or, up to the reordering of $\pi_1$ and $\pi_2$,  $\Sp_{V_1\otimes_cV_{2}}(\pi_1\pi_2)=\Sp_{V_1}(\pi_1T)\times\Sp_{V_{2}}(\pi_{2})$, where 
$T$ is an associator for $V_1$. In the latter case, $\pi_2$ is odd and hence $o(\pi_2)$ is even. Then $\Sp_{V_2}(\pi_2)=\{\pm1\}\times \Sp_{V_2}(\pi_2)$. 
By Lemma~\ref{lemma:spectrum_module_with_associator}(1), we get that $\Sp(\pi_1 T) \times \{\pm 1\} = \Sp(\pi_1) \times \{\pm 1\}$.
Therefore, 
it suffices to show that $|\Sp_{V_1}(\pi_1)\times\Sp_{V_2}(\pi_2)|=o(\pi_1\pi_2)$ to prove the statement in all cases.
But this already follows from Proposition~\ref{proposition:product_of_two_sets_cardiality_in_complex_numbers}.
\end{proof}

\section{Powers of Conjugacy Classes in $\tilde S_n$}\label{sec:conju-classes-powers}
In this section, we determine the power map on conjugacy classes in $\tilde S_n$.
We begin by establishing the orders of certain elements that lie above single cycles:
\begin{theorem}\label{c:value_of_powers}
For $a\geq 1$ and $n\geq 2$, let $\pi_n=t_at_{a+1}\ldots t_{a+n-2}\in \tilde S_m$.
Then $\pi_n^{n}=(-1)^{\lfloor(n+2)/4\rfloor}$. 
\end{theorem}
Without loss of generality, assume that $a=1$.
The theorem will be established with the help of the following recursive result:
\begin{lemma}\label{l:sign_of_powers_of_cycles}
\[
(t_1 t_2 \cdots t_n)^{n+1}
=
(-1)^{\frac{n(n+1)(n+2)}6}\,(t_1 t_2 \cdots t_{n-1})^{n}.
\]
\end{lemma}

\begin{proof}
Set
$
u=t_1t_2\cdots t_n,$ $ v=t_1t_2\cdots t_{n-1}.
$
We need to show that
$
u^{n+1}=(-1)^{\frac{n(n+1)(n+2)}6}\,v^n.
$
Since $u=vt_n$, we have
\[
u^{n+1}=v\,t_n\,u^n.
\]
The strategy is to repeatedly transform the expression $v t_n u^n$ by moving one factor of $u$ from the right to the left, converting it into an extra factor of $v$, while keeping track of the powers of $(-1)$ that appear.
More precisely, we shall prove the following intermediate identity: for every $0\le j\le n-2$,
\begin{equation}\label{eq:step}
v^{\,j+1}\,t_n t_{n-1}\cdots t_{n-j}\,u^{\,n-j}
=
(-1)^{(j+1)(n-j)}\,
v^{\,j+2}\,t_n t_{n-1}\cdots t_{n-j}t_{n-j-1}\,u^{\,n-j-1}.
\end{equation}

Indeed, write $u^{n-j}=t_1t_2\cdots t_n\,u^{n-j-1}$. Then

\begin{align}
v^{j+1}\,t_n\cdots t_{n-j}\,u^{n-j} 
&= v^{j+1}\,t_n\cdots t_{n-j}\,(t_1t_2\cdots t_n)\,u^{n-j-1}.
\end{align}

Moving the block $t_n\cdots t_{n-j}$ past $t_1\cdots t_{n-j-2}$  gives a factor $(-1)^{(j+1)(n-j-2)}.$
Hence we get
\begin{align}
\label{eq:order_n_first}
v^{j+1}t_n\cdots t_{n-j}(t_1t_2\cdots t_n)u^{n-j-1}
&= (-1)^{(j+1)(n-j-2)}
   v^{j+1}(t_1\cdots t_{n-j-2}) \notag\\
&\qquad
   (t_n\cdots t_{n-j+1})\,  (t_{n-j}t_{n-j-1}t_{n-j}) \,
   (t_{n-j+1}\cdots t_n\,u^{n-j-1}).
\end{align}
The only nontrivial part of the right-hand side is the conjugated braid block
\[
w_j:=(t_n\cdots t_{n-j+1})
(t_{n-j}t_{n-j-1}t_{n-j})
(t_{n-j+1}\cdots t_n).
\]
Write $$x_j = t_n\cdots t_{n-j+1} \text{ and } y_j = t_{n-j+1}\cdots t_n, \text{ so that } w_j = x_j (t_{n-j} t_{n-j-1} t_{n-j}) y_j .$$

Using the braid relation $t_it_{i+1}t_i=t_{i+1}t_it_{i+1}$ together with the anti-commutation relations $t_it_k=-t_kt_i$ whenever $|i-k|\ge2$, we obtain the recursion
\begin{align*}
    w_j &= x_j \, (t_{n-j} t_{n-j-1} t_{n-j}) \, y_j 
        =  x_j \, (t_{n-j-1} t_{n-j} t_{n-j-1}) \, y_j\\
        &=  t_{n-j-1}\, (x_j t_{n-j} y_j) \, t_{n-j-1}
        =  t_{n-j-1}\, w_{j-1} \,t_{n-j-1}
\end{align*}

Now replacing $j$ with $j-1,j-2, \dots ,1$, we obtain
\begin{align*}
    w_j & =  (t_{n-j-1} t_{n-j-2} \dots t_{n-2})\, (t_n t_{n-1} t_{n}) \, (t_{n-2} \dots t_{n-j-2} t_{n-j-1})\\
        & =  (t_{n-j-1} t_{n-j-2} \dots t_{n-2})\, (t_{n-1} t_{n} t_{n-1}) \, (t_{n-2} \dots t_{n-j-2} t_{n-j-1})\\
        &=  (t_{n-j-1} \dots t_{n-1}) (t_n \dots t_{n-j-1})
\end{align*}

Substituting the above expression for $w_j$ into Eq.~\eqref{eq:order_n_first} gives
\begin{align*}
    v^{j+1} t_n t_{n-1} \dots t_{n-j} u^{n-j} = (-1)^{(j+1)(n-j-2)} v^{j+2} t_n \dots t_{n-j-1} v^{n-j-1},
\end{align*}
which gives Eq.~\eqref{eq:step}.
Applying Eq.~\eqref{eq:step} successively for $j=0,1,\ldots,n-2$ moves every remaining factor of $u$ to the left, producing
\begin{align*}
    u^n &= (-1)^k v^{n} t_n t_{n-1} \dots t_1 u = (-1)^{k+n} v^n,
\end{align*}
where $k=\sum\limits_{j=0}^{n-2} (j+1)(n-j)$. Therefore, $k+n = \frac{n(n+1)(n+2)}6$.

Hence, $u^n = (-1)^{\frac{n(n+1)(n+2)}6} v^n$. This completes the proof of the lemma.
\end{proof}

\begin{proof}[Proof of Theorem~\ref{c:value_of_powers}]
It is easy to see that $\frac{n(n+1)(n+2)}6$ is odd if and only if $n\equiv1\pmod 4$.
Now $\pi_2^2=t_a^2=-1$. 
By Lemma~\ref{l:sign_of_powers_of_cycles}, the number of times the sign of $\pi_n^n$ changes between $2$ and $n$ is the number of integers $2<i<n$ such that $i\equiv 1\mod 4$, which is $\lfloor\frac{n-2}4\rfloor$.
Hence $\pi_n^n = -(-1)^{\lfloor(n-2)/4\rfloor} = (-1)^{\lfloor(n+2)/4\rfloor}$.
\end{proof}

This theorem helps to find the order of a preimage in $\tilde S_n$
and to understand in which conjugacy classes the powers of a given $n$-cycle lie for $n$ odd.

\begin{example}
Consider $\sigma=\sigma_{(15)}^+=t_1t_2\ldots t_{14}\in\tilde S_{15}$, a preimage of the cycle $(1,2,\ldots,15)$. By Theorem~\ref{c:value_of_powers}, $\sigma^{15}=1$, so $(-\sigma_{15})^{15}=-1$.
Hence $\sigma^i$ is conjugate to $\sigma$ if $i$ is coprime to $15$.
Now $(\sigma^3)^{5}=1$ and $\sigma^3$ is conjugate to either $\sigma_{(5,5,5)}^+$ or $\sigma_{(5,5,5)}^-$. Since $(t_1t_2t_3t_4)^5=-1$, we find that $(\sigma_{(5,5,5)}^+)^5=-1$, so $\sigma^3$ is conjugate to $\sigma_{(5,5,5)}^-$.
Similarly, since $(t_1t_2)^3=-1$, we see that $\sigma^5$ is conjugate to
$\sigma_{(3,3,3,3,3)}^-$.
\end{example}


Now we shall consider the $n$-cycles in $\tilde S_n$ when $n$ is even.
To solve the conjugacy problem for powers of $
\sigma_{(n)}$, we need to use quadratic Gauss sums.

\begin{definition}
If $n$ and $k$ are integer, where $n>0$, then 
the sum $G(k;n)=\sum\limits_{r=1}^ne^\frac{2\pi ikr^2}{n}$
is called a \emph{quadratic Gauss sum}.
\end{definition}

We collect basic properties of $G(k;n)$ in the following lemma.

\begin{lemma}\label{l:gauss_sum_properties}
The following statements are true.
\begin{enumerate}
\item $G(k; mn)=G(km;n)G(kn;m)$ whenever $(m,n)=1$. This reduces the study of quadratic Gauss sums to the special case $G(k; p^a)$, where $p$ is prime. 
\item If $p$ is an odd prime, $k$ is coprime to $p$,
and $a\geq2$, then 
$$G(k; p^a)=\begin{cases}
 p^{a/2}, \text{if }a\text{ is even}, \\
 p^{(a-1)/2}G(k;p), \text{if }a\text{ is odd}. 
\end{cases}$$
\item If $p$ is an odd prime and $n$ is coprime to $p$, then $G(n;p)=\jacobi{n}{p}G(1;p)$.
\item If $k$ is odd, then $G(k;2)=0$, $G(k;4)=2+2i^k$, 
$G(k;8)=4e^{\pi ik/4}$.
\item If $m$ is a positive integer, then
$$G(1;m)=\frac{1}{2}\sqrt{m}(1+i)(1+e^{-\pi im/2})=\begin{cases}
\sqrt{m}, \text{ if }m\equiv1\pmod{4},\\
0, \text{if }m\equiv2\pmod{4},\\
i\sqrt{m}, \text{ if }m\equiv3\pmod{4},\\
(1+i)\sqrt{m}, \text{ if }m\equiv0\pmod{4}.
\end{cases}$$
\end{enumerate}
\end{lemma}
\begin{proof}
Items 1) and 2) are \cite[Exercise 8.16]{Apostol},
while items 3) and 5) are formulas (29) and (30) in \cite[9.10]{Apostol}, respectively. 
Finally, item (4) can be verified by hands.
\end{proof}

\begin{corollary}\label{c:gauss_sum_odd}
If $n$ and $k$ are coprime positive integers, where $n$ is odd, then $G(k;n)=\jacobi{k}{n}G(1;n)$, where $\jacobi{k}{n}$ means the Jacobi symbol for $k$ and $n$.
\end{corollary}
\begin{proof}
We prove the statement by induction on $n$. 
First, we consider the case when $n$ is a prime power.
If $n=p$, then the assertion is true by Lemma~\ref{l:gauss_sum_properties}(3). If $n=p^a$, where $a\geq2$ and $a$ is even, then Lemma~\ref{l:gauss_sum_properties}(2) implies that $G(k;p^a)=G(1;p^a)=\jacobi{k}{n}G(1;p^a)$.
If $n=p^a$, where $a\geq2$ and $a$ is odd, then Lemma~\ref{l:gauss_sum_properties}(2) implies that $G(k;p^a)=p^{(\alpha-1)/2}G(k;p)=
p^{(\alpha-1)/2}\jacobi{k}{p}G(1;p)$.
Applying this to $k=1$, we get that $G(k;p^\alpha)=\jacobi{k}{p}G(1;p^a)=\jacobi{k}{p^a}G(1;p^a)$, as required.
Suppose now that $n=p^am$, where $p$ is a prime, 
$m>1$ and $a$ are positive integers, and $(m,p)=1$.
Using Lemma~\ref{l:gauss_sum_properties}(1), we get that
\begin{align*}
G(k;n)=G(k;p^am)=G(kp^a;m)G(km; p^a)&=\jacobi{kp^a}{m}\jacobi{km}{p^a}G(1;m)G(1;p^a)\\&=\jacobi{k}{n}\jacobi{p^a}{m}\jacobi{m}{p^a}G(1;m)G(1;p^a).
\end{align*}
Applying this to $k=1$, we infer that $G(k;n)=\jacobi{k}{n}G(1;n)$.
\end{proof}

The following lemma lists well-known properties of the Jacobi symbol.
\begin{lemma}~\label{lemma:jacobi_basic_facts}
Let $m$ and $k$ be odd integers. Then the following statement holds.
    \begin{enumerate}
        \item $\jacobi{-1}{k} = (-1)^{(k-1)/2}$;
        \item $\jacobi{2}{k} = (-1)^{(k^2-1)/8}$;
        \item $\jacobi{m}{k} = (-1)^{\big(\tfrac{(m-1)(k-1)}{4}\big)} \jacobi{k}{m}$ (the law of quadratic reciprocity);
        \item If $m\equiv 3 \, (\text{mod }4)$, then $\jacobi{k}{m} = \jacobi{-m}{k} $;
        \item If $k \equiv 1 \, (\text{mod }4)$, then $\jacobi{2}{k} = (-1)^{(k-1)/2}$;
        \item If $k \equiv 3 \, (\text{mod }4)$, then $\jacobi{2}{k} = (-1)^{(k+1)/2}$;
        \end{enumerate}
\end{lemma}
\begin{lemma}~\label{lemma:tau_k_on_sqrt_2}
Let $N,k$ be positive odd integers with $(N,k)=1$ and $8 \mid N$.
Let $\eta=e^{2\pi i/N}$, and let
$\tau\in\Gal(\mathbb{Q}(\eta)/\mathbb{Q})$ be the unique automorphism satisfying
$\tau(\eta)=\eta^k$.
Then
$
\tau(\sqrt{2})=\jacobi{2}{k}\sqrt{2}.
$
\end{lemma}

\begin{proof}
Since $8\mid N$, we have $\zeta_8:=e^{2\pi i/8}=\eta^{N/8}\in\mathbb{Q}(\eta)$ and hence
$\tau(\zeta_8)=\zeta_8^k.$
Using the equality $\zeta_8^k+\zeta_8^{-k}=2\cos\!\left(\frac{k\pi}{4}\right),$ and $k$ is odd, we get that
\[
2\cos\!\left(\frac{k\pi}{4}\right)
=
\begin{cases}
\sqrt{2}, & k\equiv1,7\pmod8,\\
-\sqrt{2}, & k\equiv3,5\pmod8.
\end{cases}
\]
On the other hand, Lemma~\ref{lemma:jacobi_basic_facts} implies that
\[
\jacobi{2}{k}
=
\begin{cases}
1, & k\equiv1,7\pmod8,\\
-1, & k\equiv3,5\pmod8.
\end{cases}
\]
Therefore,
\[
\tau(\sqrt{2})
=\tau(\zeta_8)+\tau(\zeta_8^{-1})
=\zeta_8^k+\zeta_8^{-k}=\jacobi{2}{k}\sqrt{2},
\]
as required.
\end{proof}
Let $n$ be even and write $n=2m$, then we define
$    D(n) :=(-1)^mm= \begin{cases}
        -m, & \text{ if } m \text{ is odd,}\\
        m,  & \text{otherwise.}
    \end{cases}
$

\begin{proposition}~\label{proposition:conjugacy_classes_of_powers_of_n_cycles_even}
Suppose that $n=2m$ and $k$ is a positive integer 
coprime to $n$.
Then $\sigma_{(n)}^k$ is conjugate to $\sigma_{(n)}$
if and only if $\jacobi{D(n)}{k}=1$.
\end{proposition}
\begin{proof}
    Recall that $\sigma_{(n)}^k$ is conjugate to $\sigma_{(n)}$ or $-\sigma_{(n)}$.
    Since $\chi := \phi_{(n)}^+$ does not vanish on either of these three elements,
    $\sigma_{(n)}^k$ is conjugate to $\sigma_{(n)}$ (resp. $-\sigma_{(n)}$) if and only if $\chi(\sigma_{(n)}^k)$ is equal to $\chi(\sigma_{(n)})$ (resp. $\chi(-\sigma_{(n)}^k)$).
    Notice that $\chi(-\sigma_{(n)}) = - \chi(\sigma_{(n)}) \neq \chi(\sigma_{(n)})$ due to $\chi(\sigma_{(n)}) \neq 0$.

    Let $N$ be the order of $\sigma_{(n)}$ and set $\zeta = e^{{2\pi i}/{N}}$.
    Since $n$ is even, $(k,n)=1$ if and only if $(k,N)=1$.
    Observe that $\chi(\sigma_{(n)}) = \zeta^{i_1} + \zeta^{i_2} + \cdots + \zeta^{i_r}$, where $i_1,\dots, i_r$ are nonnegative integers and $r=\chi(1)$.
    Therefore, $\chi(\sigma_{(n)}^k) = \zeta^{ki_1} + \cdots + \zeta^{ki_r}$.

    Let $\tau \in \Gal(\mathbb{Q}(\eta)/\mathbb{Q})$ such that $\tau(\eta) = \eta^k$.
    Then $\tau(\chi(\sigma_{(n)})) = \chi(\sigma_{(n)}^k)$. From Lemma~\ref{lemma:spin_character_values}, $\chi(\sigma_{(n)}) = i^m \sqrt{m}$, where $m = n/2$.
    Thus it suffices to check whether $\tau(\chi(\sigma_{(n)}))$ is equal to $i^m \sqrt{m}$ or $-i^m \sqrt{m}$.
    
    Suppose that $m$ is odd.
    If $m\equiv 1 (\text{mod } 4)$, then $\tau(\chi(\sigma_{(n)})) = \tau(i^m \sqrt{m}) = \tau(i \sqrt{m}).$
    Using Theorem~\ref{c:value_of_powers}, we get $4 \mid N$, which is equivalent to that $i$ is an $N$-th root of unity.
    At the same time, using Lemma~\ref{l:gauss_sum_properties}, we get that $\sqrt{m} = G(1;m)$.
    Hence, by Corollary~\ref{c:gauss_sum_odd} and Lemma~\ref{lemma:jacobi_basic_facts}, we obtain 
    \begin{align*}
    \tau(\sigma_{(n)}) 
    &= i^k \tau(G(1;m)) = i^k G(k;m) = (-1)^{(k-1)/2} i \jacobi{k}{m} \sqrt{m}\\
    &= \jacobi{-1}{k} \jacobi{m}{k}\chi(\sigma_{(n)}) = \jacobi{-m}{k} \chi(\sigma_{(n)})=\jacobi{D(n)}{k} \chi(\sigma_{(n)}).
    \end{align*}
    Hence, the lemma is proved in this case.

    If $m \equiv 3 (\text{mod } 4)$, then the same series of calculation yields the following:
    \begin{align*}
        \tau(\chi(\sigma_{(n)})) 
        &= \tau(i^m \sqrt{m}) = \tau(-i \sqrt{m}) = \tau(-G(1;m)) = -G(k;m) \\
        &= \jacobi{k}{m} (-G(1;m)) = \jacobi{k}{m} \chi(\sigma_{(n)}) = \jacobi{-m}{k} \chi(\sigma_{(n)}).
    \end{align*}

    Let us consider the case $m = 2^{2a}b$ with $b$ odd and $a\geq 1$.
    Then $i^m\sqrt{m}=\sqrt{m}$. Now we see that $\sqrt{m}=2^{a}\sqrt{b}$.
    If $b\equiv1\pmod 4$, then 
    \begin{displaymath}
        \tau_k(\sqrt{m})=\tau_k(2^{a})\tau_k(G(1;b))=2^aG(k;b)=2^a\jacobi{k}{b}G(1;b)=\jacobi{k}{b}\sqrt{m} = \jacobi{b}{k} \sqrt{m}.
    \end{displaymath}
    Since $\jacobi{m}{k} = \jacobi{b}{k}$, the result follows in this case.
    
    If $b\equiv3\pmod 4$, then
    \begin{displaymath}
        \tau_k(\sqrt{m})=2^a\tau_k(i^{-1}G(1;b))= 2^ai^{-k}G(k;b)=2^a(-i)^k\jacobi{k}{b}i\sqrt{b}=(-1)^{(k-1)/2}\jacobi{k}{b}\sqrt{m}.
    \end{displaymath}
    Using Lemma~\ref{lemma:jacobi_basic_facts}, we have $(-1)^{(k-1)/2} \jacobi{k}{b} = \jacobi{b}{k} = \jacobi{m}{k}$ and the result follows.

Suppose that $m=2^{2a+1}b$, where $b$ is odd and $a\ge1$.
Then $i^m\sqrt m=\sqrt m$, and
$
\sqrt m=2^a\sqrt2\,\sqrt b.
$
Hence
\[
\tau_k(\sqrt m)
=
2^a\tau_k(\sqrt2)\tau_k(\sqrt b).
\]

By Lemma~\ref{lemma:tau_k_on_sqrt_2}, $\tau_k(\sqrt2)=\jacobi{2}{k}\sqrt2.$

Using the proof of $m$ odd case, we get the following.

If $b\equiv1\pmod4$, then $\tau_k(\sqrt b)=\jacobi{b}{k}\sqrt b.$

If $b\equiv3\pmod4$, then $\tau_k(\sqrt b)=(-1)^{(k-1)/2}\jacobi{k}{b}\sqrt b=\jacobi{b}{k} \sqrt{b}.$

Since $\jacobi{2}{k} = \jacobi{2^{2a+1}}{k}$, we have $\jacobi{2}{k} \jacobi{b}{k} = \jacobi{2^{2a+1}}{k} \jacobi{b}{k}  = \jacobi{m}{k}$.
Hence,  $    \tau(\sqrt{m}) = \jacobi{m}{k} \sqrt{m}.$
This completes the proof in this case.

Finally, if $m=2b$, then $i^m\sqrt m=-\sqrt m$ and
$
\sqrt m=\sqrt2\,\sqrt b.
$
Notice that $8\mid N$ in this case by Theorem~\ref{c:value_of_powers}. Using Lemma~\ref{lemma:tau_k_on_sqrt_2}, we get $\tau_k(\sqrt{2}) = \jacobi{2}{k}\sqrt{2}$.
Now the above argument applies verbatim with $a=0$, completing the proof.      
\end{proof}
We record a simple observation.

\begin{lemma}\label{lemma:conjugate_by_an_even_permutation}
Let $n$ be even and let $k$ be a positive integer with $(k,n)=1$.
Then there exists $g\in \tilde A_n$ such that
\[
g\sigma_{(n)}^kg^{-1}
=
\jacobi{D(n)}{k}\sigma_{(n)}.
\]
\end{lemma}

\begin{proof}
Since $\pi(\sigma_{(n)})$ and $\pi(\sigma_{(n)}^k)$ are conjugate in $S_n$, there exists
$h\in S_n$ such that
$
h\,\pi(\sigma_{(n)}^k)\,h^{-1}
=
\pi(\sigma_{(n)}).
$
If $h$ is odd, replace it with $h':=h\pi(\sigma_{(n)})$.
Since $\pi(\sigma_{(n)})$ commutes with
$\pi(\sigma_{(n)}^k)$, we still have
$
h'\,\pi(\sigma_{(n)}^k)\,h'^{-1}
=
\pi(\sigma_{(n)}),
$  
and $h'$ is even. Thus, there exists an even permutation
$h\in A_n$ satisfying
$
h\,\pi(\sigma_{(n)}^k)\,h^{-1}
=
\pi(\sigma_{(n)}).
$
Let $g$ be one of the preimages of $h$ in $\tilde A_n$. By
Proposition~\ref{proposition:conjugacy_classes_of_powers_of_n_cycles_even},
$
g\sigma_{(n)}^kg^{-1}
=
\jacobi{D(n)}{k}\sigma_{(n)},
$
as required.
\end{proof}
We need the following well-known lemma.
\begin{lemma}~\label{lemmma:conjugacy_class_splitting_index_2}
Let $G$ be a group and $H \leq G$ a subgroup of index $2$. Let $x \in H$ and let $C$ denote the conjugacy class of $x$ in $G$ which splits as two conjugacy classes of $H$. Suppose that $g x g^{-1} = y$ for some $g\in G$.
Then $y$ is conjugate to $x$ in $H$ if and only if $g \in H$.
\end{lemma}
\begin{proof}
Let $C^+$ denote the conjugacy class of $x$ in $H$.
One direction is immediate. Indeed, if $g \in H$, then $y\in C^+$.
For the other direction, assume the contrary that $y\in C$ but 
$g \notin H$.
Since $y \in C^+$, there exists $h\in H$ such that $h y h^{-1} = x$.
Thus, $hg \in Z_{G}(x)$. 
Since $hg \notin H$, $Z_{H}(x)$ is an index $2$ subgroup of $Z_{G}(x)$.
Therefore, $|C^+| = \frac{|G|}{|Z_{G}(x)|} = \frac{2|H|}{2|Z_{H}(x)|} = \frac{|H|}{|Z_{H}(x)|} = |C|$, a contradiction to $C$ splits.
This completes the proof of the claim.
\end{proof}
Let us explain how to compute the minimal polynomial of an element $g\in \tilde S_n$.
The main focus is on determining the signs that need to be attached to powers of a given element.

Recall that $\sigma_\lambda$ and $-\sigma_\lambda$ are not conjugate if and only if either $\lambda$ has only odd parts, or $\lambda$ has distinct parts with an odd number of even parts.
Using Lemma~\ref{lemma:spin_character_values}, we can find the character values of all irreducible spin representations at all $\sigma_\lambda^\pm$.
Therefore, in order to calculate the minimal polynomial of $\sigma_\lambda$ in an irreducible spin representation, we need to determine whether $\sigma_\lambda^i$ is conjugate to $\sigma_{\lambda^i}$ or $-\sigma_{\lambda^i}$ (here $\lambda^i$ is the cycle type of the $i$-th power of a permutation with cycle type $\lambda$).

We start with computing the order of $\sigma_\lambda$ in general. For a partition $\lambda$, let $e(\lambda)$ denote the number of even parts of $\lambda$.
\begin{lemma}
	\label{lemma:powers_of_pi_lambda}
	Let $\lambda=(\lambda_1,\dotsc,\lambda_k)$ be a partition of $n$ with length $k$.
	Then for a positive integer~$i$, 
	\begin{displaymath}
		\sigma_\lambda^i = (-1)^{\binom{i}{2}\binom{e(\lambda)}{2}}\sigma_{1}^i\sigma_{2}^i\cdots \sigma_{k}^i,
	\end{displaymath}
	where $\sigma_j= t_{a+1}t_{a+2}\cdots t_{a+\lambda_j-1}$ and $a=\lambda_1+\ldots+\lambda_{j-1}$ for each $j$.

	In particular, $\sigma_\lambda^{\lcm(\lambda)} = (-1)^{\binom{\lcm(\lambda)}{2}\binom{e(\lambda)}{2}}\prod\limits_{j=1}^k(-1)^{\lfloor(\lambda_j+2)/4\rfloor\lcm(\lambda)/\lambda_j}$.
\end{lemma}
\begin{proof}
	We prove the statement by induction on the length $k$ of $\lambda$.
	For $k=1$, the statement reads $\sigma_{(\lambda_1)}^i = \sigma_{1}^i$, which is true.
	Suppose that $k>1$ and the statement is true for all partitions of length less than $k$.
	Then we can write $\sigma_\lambda = \sigma_1\sigma_2\cdots \sigma_k$, where $\sigma_j = t_{a+1}t_{a+2}\cdots t_{a+\lambda_j-1}$, with $a=\lambda_1+\ldots+\lambda_{j-1}$.
    Denote $\lambda_j-1$ by $l_j$ for each $j$.
	Then
	\begin{align*}
		\sigma_\lambda^i &= (\sigma_1\cdots \sigma_k)^i\\
		&= (-1)^{l_1(l_2+\dotsb+l_k)(1+2+\dotsb+(i-1))}\sigma_1^i(\sigma_2\dotsc\sigma_k)^i\\
		&= (-1)^{\binom{i}{2}l_1(l_2+\dotsb+l_k)}\sigma_1^i(\sigma_2\cdots \sigma_k)^i,
	\end{align*}
	since the $j$-th $\sigma_1$, consisting of $l_1$ letters, has been moved past $(l_2+\dotsb+l_k)(j-1)$ letters.
	By induction on $k$, we get
	\begin{align*}
		(\sigma_1\cdots \sigma_k)^i = (-1)^{\binom{i}{2}(l_1(l_{2}+\dotsb + l_k) + \binom{e(\tilde\lambda)}{2})}\sigma_1^i\sigma_2^i\cdots \sigma_k^i = (-1)^{\binom{i}{2}\binom{e(\lambda)}{2}}\sigma_1^i\sigma_2^i\cdots \sigma_k^i,
	\end{align*}
    where $\tilde \lambda = (\lambda_2,\dots,\lambda_k)$.

    The latter statement follows from the former statement and Theorem~\ref{c:value_of_powers}.
\end{proof}

From Theorem~\ref{c:value_of_powers}, we know the orders of $\sigma_{(n)}$, and thus, by Lemma~\ref{lemma:powers_of_pi_lambda}, we can determine the order of $\sigma_\lambda$ for any partition $\lambda$.

For a partition $\lambda\vdash n$, let $\alpha=\lambda^i$ denote the cycle type of $\sigma_\lambda^i$.
Then $\sigma_\lambda^i$ is conjugate to $\sigma_\alpha$, $-\sigma_\alpha$ or both $\sigma_\alpha$ and $-\sigma_\alpha$.

If $\alpha \notin\OP_n\cup \DP_n$, then $\sigma_\alpha$ and $-\sigma_\alpha$ are conjugate, so the conjugacy class of $\sigma_\lambda^i$ contains both $\sigma_\alpha$ and $-\sigma_\alpha$.
In this case, all spin characters vanish at $\sigma_\lambda^i$ by Lemma~\ref{lemma:spin_character_values}.

Suppose $\alpha \in\OP_n$.
Then the order of $\sigma_\alpha$ can be determined by Lemma~\ref{lemma:powers_of_pi_lambda}, and the order of $\sigma_\lambda^i$ is equal to the order $N$ of $\sigma_\lambda$ divided by $(i,N)$.
With these two information, we can determine whether $\sigma_{\lambda^i}$ is conjugate to $\sigma_\alpha$ or $-\sigma_\alpha$.


Now we shall consider, more generally, $\alpha \in \DP_n$.
Assume that $(i,N)=1$.
From Lemma~\ref{lemma:powers_of_pi_lambda}, we have
\begin{align}\label{eq:sigma_lambda^i_exprsession_in_sigma_la^i}
	\sigma_\lambda^i = (-1)^{\binom{i}{2}\binom{e(\lambda)}{2}}\sigma_{\lambda_1}^i\sigma_{\lambda_2}^i\cdots \sigma_{\lambda_k}^i.
\end{align}
Since $i$ is coprime to the order of $\sigma_{\lambda_j}$, $\sigma_{\lambda_j}^i$ is conjugate to $\sigma_{\lambda_j}$ or $-\sigma_{\lambda_j}$ for each $j$.

Suppose that $\lambda_j$ is even.
Then by Lemma~\ref{lemma:conjugate_by_an_even_permutation}, there exists $g_j\in \tilde A_{\lambda_j}$ such that $g_j\sigma_{\lambda_j}^ig_j^{-1} = \jacobi{D(\lambda_j)}{j}\sigma_{\lambda_j}$.
Therefore,
\begin{align*}
	g_j \sigma_{\lambda_1}^i\cdots \sigma_{\lambda_j}^i\cdots \sigma_{\lambda_k}^i g_j^{-1} 
	&= \sigma_{\lambda_1}^i\cdots g_j\sigma_{\lambda_j}^ig_j^{-1}\cdots \sigma_{\lambda_k}^i\\
	&= \jacobi{D(\lambda_j)}{i} \, \sigma_{\lambda_1}^i\cdots \sigma_{\lambda_j}\cdots \sigma_{\lambda_k}^i.
\end{align*}
Suppose that $\lambda_j$ is odd. Then $\sigma_{\lambda_j}^i$ is conjugate to $\sigma_{\lambda_j}.$
Let $g_j\in \tilde S_{\lambda_j}$ such that $g_j \sigma_{\lambda_j}^i g_j^{-1} = \sigma_{\lambda_j}$.
It follows from~\cite[Theorem 2.5]{p2024cyclic} and Lemma~\ref{lemmma:conjugacy_class_splitting_index_2} that the parity of $g_j$ is equal to the Jacobi symbol $\jacobi{i}{\lambda_j}$.
Therefore, for $\lambda\in \DP_n^-$, we obtain that
\begin{align*}
	g_j \sigma_{\lambda_1}^i\cdots \sigma_{\lambda_j}^i\cdots \sigma_{\lambda_k}^i g_j^{-1} 
	&= \jacobi{i}{\lambda_j} \sigma_{\lambda_1}^i\cdots g_j\sigma_{\lambda_j}^ig_j^{-1}\cdots \sigma_{\lambda_k}^i\\
	&= \jacobi{i}{\lambda_j} \sigma_{\lambda_1}^i\cdots \sigma_{\lambda_j}\cdots \sigma_{\lambda_k}^i.
\end{align*}
Similarly, for $\lambda \in \DP_n^+$, we get that
$
	g_j \sigma_{\lambda_1}^i\cdots \sigma_{\lambda_j}^i\cdots \sigma_{\lambda_k}^i g_j^{-1} 
	= \sigma_{\lambda_1}^i\cdots \sigma_{\lambda_j}\cdots \sigma_{\lambda_k}^i.
$

Combining the above two cases, we obtain
\begin{align}\label{eq:sigma_la^i_p2024_used}
	g (\sigma_{\lambda_1}^i\cdots \sigma_{\lambda_k}^i) g^{-1}
	&= \begin{cases}
	     \left(\prod_{j:\lambda_j \text{ even}} \jacobi{D(\lambda_j)}{i} \right) \sigma_{\lambda_1}\cdots \sigma_{\lambda_k}  & \text{if } \lambda\in DP_n^+,\\
          \left(\prod_{j:\lambda_j \text{ odd}} \jacobi{i}{\lambda_j}\right)\left(\prod_{j:\lambda_j \text{ even}} \jacobi{D(\lambda_j)}{i} \right) \sigma_{\lambda_1}\cdots \sigma_{\lambda_k} & \text{if } \lambda\in DP_n^-,\\
	\end{cases}
\end{align}
with $\sgn(g) = \prod_{j:\lambda_j \text{ odd}} \jacobi{i}{\lambda_j}$, where $g = g_1 \dots g_k$.

The following lemma is immediate from Eq.~\eqref{eq:sigma_lambda^i_exprsession_in_sigma_la^i} and~\eqref{eq:sigma_la^i_p2024_used}.

\begin{lemma}
	\label{lemma:conjugacy_classes_of_powers_of_sigma_lambda}
	Let $\lambda=(\lambda_1,\dotsc,\lambda_k)$ be a partition of $n$ with length $k$.
	Let $N$ be the order of $\sigma_\lambda$.
	Then for any integer $i$, the power $\sigma_\lambda^i$ is conjugate to $\sigma_\alpha$ in $\tilde S_n$, where $\alpha=\lambda^i$, if and only if one of the following holds:
	\begin{enumerate}
		\item $\alpha \notin\OP_n\cup\DP^-_n$. In this case, $\sigma_\lambda^i$ is conjugate to both $\sigma_\alpha$ and $-\sigma_\alpha$.
		\item $\alpha \in\OP_n$ and the order of $\sigma_\alpha$ is equal to $N/(i,N)$.
		\item $\alpha \in \DP^-_n$; then $\alpha=\lambda$ and $(-1)^{\binom{i}{2}\binom{e(\lambda)}{2}}\left(\prod_{j:\lambda_j \text{ odd}} \jacobi{i}{\lambda_j}\right)\left(\prod_{j:\lambda_j \text{ even}} \jacobi{D(\lambda_j)}{i} \right)= 1$.
	\end{enumerate}
\end{lemma}
Since the conjugacy class $C_\lambda$ of $\tilde{S_n}$ splits in $\tilde A_n$ if and only if $\lambda \in \DP_n^+$, we consider only this case in the following lemma.
\begin{lemma}
	\label{lemma:conjugacy_classes_of_powers_of_sigma_lambda_alt}
	Let $\lambda \in \DP_n^+$ be a partition of $n$ with length $k$.
	Let $N$ be the order of $\sigma_\lambda$.
	Then for any integer $i$, the power $\sigma_\lambda^i$ is conjugate to $\sigma_\lambda$ in $\tilde A_n$ if and only if 
    $(i,N)=1$ and 
    \begin{displaymath}
        \left(\prod_{j:\lambda_j \text{ odd}} \jacobi{i}{\lambda_j}\right)\left(\prod_{j:\lambda_j \text{ even}} \jacobi{D(\lambda_j)}{i} \right) = 1.
    \end{displaymath}
    In particular, if $\lambda \in \OP_n \cap \DP_n$, the order of $\sigma_\lambda$ is equal to $2 \lcm(\lambda)$ and $(i,2\lcm(\lambda)) = 2$, then $\sigma_\lambda^i $ is conjugate to $-\sigma_\lambda$ if and only if $\prod_{j:\lambda_j \text{ odd}} \jacobi{i}{\lambda_j} = 1$.
\end{lemma}
\begin{proof}
Let $\lambda \in \DP_n^+$ and $C_\lambda$ be a conjugacy class of $\tilde S_n$.
Then $C_\lambda$ splits into two conjugacy classes of $\tilde A_n$.
Let $C$ be the conjugacy class of $\tilde A_n$ containing $\sigma_\lambda$. 
Let $i$ be a positive integer with $(i,N)=1$.
Since the orders of $\sigma_\lambda$ and $\sigma^i_\lambda$ are equal, there exists $g\in \tilde S_n$ such that $g \sigma_\lambda^i g^{-1} = \sigma_\lambda$.
By Lemma~\ref{lemmma:conjugacy_class_splitting_index_2} and the the discussion above with the fact that $\sgn(g) = \prod_{j:\lambda_j \text{ odd}} \jacobi{i}{\lambda_j}$, yields that $\sigma_\lambda^i$ is conjugate to $\sigma_\lambda$ in $\tilde{A}_n$ if and only if 
\begin{displaymath}
    \left(\prod_{j:\lambda_j \text{ odd}} \jacobi{i}{\lambda_j}\right)\left(\prod_{j:\lambda_j \text{ even}} \jacobi{D(\lambda_j)}{i} \right) = 1.
\end{displaymath}

The latter statement follows from~\cite[Theorem 2.5]{p2024cyclic}, Lemma~\ref{lemmma:conjugacy_class_splitting_index_2}, and the fact that $-\sigma_\lambda$ and $-\sigma_\lambda^i$ have odd order.
\end{proof}

With these tools, we can compute the minimal polynomial of any element of $\tilde S_n$ and $\tilde A_n$ using Sage~\cite{sagemath}; see the implementation at \url{https://github.com/velmurugan1066/Spin_char_min_poly/blob/main/spin_poly}.

\section{Minimal polynomials of $n$-cycles in $\tilde{S}_n$}
\label{section:minimal_polynomial_of_n_cycles}

In this section, we prove our main theorem for the $n$-cycles in $\tilde{S}_n$ 
and provide uniform lower bounds for multiplicities of its eigenvalues.

\subsection{Minimal polynomials of $p$-cycles}

Minimal polynomials of prime-order elements in quasi-simple groups have been studied in characteristic $0$ in~\cite{Zalesski_prime_projective_alternating_1996,Zalesski_distinct_eigenvalues_2006,Zalesski_quasi_simple_prime_2008,Tiep_Zalesski_2008}.
Recently, Zalesskii~\cite{Zalesskii_multiplicity_one_for_all_but_one} characterized prime-power order elements for which all but one eigenvalue have multiplicity $1$ in all irreducible spin representations of $\tilde A_n$.

We first compute the character values of $n$-cycles in irreducible spin representations of $\tilde S_n$.

\begin{lemma}\label{lemma:n_cycle_character_value_sym}
	Let $\lambda$ be a strict partition of $n$.
	Then the character value of an $n$-cycle $\sigma_{(n)}$ in the irreducible spin representation $V_\lambda$ of $\tilde S_n$ is
	\begin{displaymath}
		\phi_\lambda(\sigma_{(n)})=
		\begin{cases}
			(-1)^{\lambda_1-1} & \text{if } \lambda=(\lambda_1,\lambda_2)\text{ and }n\text{ is odd},\\
			i^k\sqrt{k} & \text{if } \lambda=(n)\text{ and }n=2k,\\
			0 & \text{otherwise}.
		\end{cases}
	\end{displaymath}
\end{lemma}

\begin{proof}
	The even case follows from Lemma~\ref{lemma:spin_character_values}.
	Assume that $n$ is odd.
	Apply Morris' recursion formula (Theorem~\ref{theorem:spin_Murnaghan_Nakayama_rule}) to $\sigma_{(n)}$.
    If $\ell(\lambda)>2$, there are no $n$-bars in $\lambda$, so $\phi_\lambda(\sigma_{(n)})=0$. If $\ell(\lambda)\le 2$, there is a unique $n$-bar, corresponding to the cell $(1,1)$ in the shifted diagram of $\lambda$.
	Thus, $L(b)=\lambda_2$ and $m(b)=0$. Hence
	\begin{displaymath}
		\phi_\lambda(\sigma_{(n)})=(-1)^{\lambda_2}\phi_{\varnothing}(\sigma_{(0)})=(-1)^{\lambda_2}=(-1)^{\lambda_1-1}.
	\end{displaymath}
\end{proof}

We also use the following standard lower bound for spin degrees, originally proved in~\cite{Kleidman_Wales_irr_restriction_o}; see also~\cite[Theorem A]{Kleshev_Tiep_modular_irr_restriction} for a characteristic-free version.

\begin{lemma}\cite[Theorem 3.1]{Kleidman_Wales_irr_restriction_o}
\label{lemma:lower_bound_on_dimension_of_spin_representation}
	Let $\lambda\in\DP_n$ with $n\geq 4$.
	Then
	\begin{displaymath}
		\dim(V_\lambda)\ge \dim(V_{(n)}) = 2^{\lfloor (n-1)/2 \rfloor}.
	\end{displaymath}
\end{lemma}

\begin{proof}
	Proceed by induction on $n$.
	The cases $4\le n\le 7$ are read from the character tables in~\cite{Hoffman_Humphreys}.
	Assume $n\ge 8$ and the claim holds for smaller $n$.

	If $\lambda=(n)$, the inequality is immediate.
	So assume $\ell(\lambda)\ge 2$.
	Write $n=m+4$, so $m\ge 4$.
	Since both $(4)$ and $(3,1)$ are contained in $\lambda$,
	Lemma~\ref{lemma:most_dominant_alpha_beta} implies that there exist $\mu,\nu\in\DP_m$ with
	\begin{displaymath}
		f_{\mu(4)}^\lambda>0,\qquad f_{\nu(3,1)}^\lambda>0.
	\end{displaymath}
	By induction,
	\begin{displaymath}
		\dim(V_\mu),\dim(V_\nu)\ge 2^{\lfloor (m-1)/2 \rfloor}.
	\end{displaymath}
	Using Theorem~\ref{theorem:Littlewood-Richardson-rule}, we get that
	\begin{align*}
	\dim(V_\lambda)
	&\ge \dim(V_\mu)\dim(V_{(4)})+\dim(V_\nu)\dim(V_{(3,1)})\\
	&\ge 2^{\lfloor (m-1)/2 \rfloor}\cdot 2+2^{\lfloor (m-1)/2 \rfloor}\cdot 2\\
	&=2^{\lfloor (m-1)/2 \rfloor+2}
	=2^{\lfloor (n-1)/2 \rfloor}.
	\end{align*}
\end{proof}

Now we determine the minimal polynomial of $p$-cycles.

\begin{lemma}
	\label{lemma:zalesskii_prime_sym}
	Let $w\in \tilde S_p$ be a $p$-cycle, where $p\ge 7$ is prime, with order $p$.
	Then the minimal polynomial of $w$ is $x^p-1$ in every irreducible spin representation of $\tilde S_p$.
	Moreover, if $p\ge 19$, each eigenvalue of $w$ appears with multiplicity at least $p$.
\end{lemma}

\begin{proof}
Let $\phi$ be an irreducible spin character and let $\delta$ be a linear character of $\langle w\rangle$.
Since $w$ is conjugate to $w^i$ for all $1\le i\le p-1$,
\begin{displaymath}
\left\langle \Res^{\tilde S_p}_{\langle w\rangle}\phi,\delta\right\rangle
=\frac1p\left(\phi(1)+\phi(w)\sum_{i=1}^{p-1}\overline{\delta(w^i)}\right).
\end{displaymath}
By Lemma~\ref{lemma:n_cycle_character_value_sym}, for odd $p$ we have $\phi(w)\in\{0,\pm1\}$.
Hence
\begin{displaymath}
\left\langle \Res^{\tilde S_p}_{\langle w\rangle}\phi,\delta\right\rangle
\ge \frac1p\bigl(\phi(1)-(p-1)\bigr).
\end{displaymath}
Using Lemma~\ref{lemma:lower_bound_on_dimension_of_spin_representation}, we have
\begin{displaymath}
\left\langle \Res^{\tilde S_p}_{\langle w\rangle}\phi,\delta\right\rangle
\ge \frac1p\bigl(2^{(p-1)/2}-p+1\bigr)>0
\end{displaymath}
for all $p\ge 7$.
Thus every linear character of $\langle w\rangle$ occurs, so the minimal polynomial is $x^p-1$.

For the latter statement of the lemma, it is enough to show that
\begin{displaymath}
\frac1p\bigl(2^{(p-1)/2}-p+1\bigr)\ge p,
\end{displaymath}
equivalently,
\begin{displaymath}
2^{(p-1)/2}-p^2-p+1\ge 0.
\end{displaymath}
The function $f(p)=2^{(p-1)/2}-p^2-p+1$ is increasing for $p\ge 19$, and $f(19)>0$.
\end{proof}
\subsection{Minimal polynomials of arbitrary $n$-cycles}~\label{subsection:arbitrary_n_cycles}
In this subsection, we prove the following theorem, which describes the minimal polynomial of $n$-cycles and provides uniform lower bounds for the multiplicity of its eigenvalues in all irreducible spin representations of $\tilde S_n$.
\begin{theorem}
	\label{theorem:cycle_double_S_n}
    The minimal polynomial of any irreducible spin representation $(\rho, V)$ of $\tilde S_n$ at an element $g$ with cycle type $(n)$ is $x^n-\epsilon$, where $\epsilon=g^n$, except when
	\begin{enumerate}
        \item   $\rho=\rho_{(2)}^+$ and the minimal polynomial of $\sigma_{(2)}^\pm$ is $ x\mp i$.
		\item   $\rho=\rho_{(2)}^-$ and the minimal polynomial of $\sigma_{(2)}^\pm$ is $ x \pm i$.
        \item $\rho=\rho_{(3)}$, where the minimal polynomial is $x^2 + \epsilon x+1$.
		\item $\rho=\rho_{(2,1)}^\pm$, where the minimal polynomial is $x - \epsilon$.
		\item $\rho  = \rho_{(4)}^\pm$ and $g$ is conjugate to $\sigma_{(4)}^+$, where the minimal polynomial is $x^2 \mp \sqrt{2} x +1$.
		\item $\rho  = \rho_{(4)}^\pm$ and $g$ is conjugate to $\sigma_{(4)}^-$, where the minimal polynomial is $x^2 \pm \sqrt{2} x +1$.
		\item $\rho=\rho_{(5)}$, where the minimal polynomial is $\frac{x^5 - \epsilon}{x - \epsilon}$.
		\item $\rho = \rho_{(6)}^\pm$ and $g$ is conjugate to $\sigma_{(6)}^+$, where the minimal polynomial is $x^4\mp\sqrt{3}x^3-2x^2\pm\sqrt{3}x+1$.
		\item $\rho = \rho_{(6)}^\pm$ and $g$ is conjugate to $\sigma_{(6)}^-$, where the minimal polynomial is $x^4\pm\sqrt{3}x^3-2x^2\mp\sqrt{3}x+1$.
		\item $\rho=\rho_{(8)}^\pm$ and $g$ is conjugate to $\sigma_{(8)}^+$, where the minimal polynomial is $\frac{x^8 -1}{(x \pm 1)}$.
		\item $\rho=\rho_{(8)}^\pm$ and $g$ is conjugate to $\sigma_{(8)}^-$, where the minimal polynomial is $\frac{x^8 -1}{(x \mp 1)}$.
	\end{enumerate}
	Moreover, for $n\geq 19$, each eigenvalue of $\rho(g)$ has multiplicity at least $n$.
\end{theorem}

For $H\leq S_m$ and $K\leq S_p$, the wreath product $H \wr K$ is defined as the semidirect product $H^{\times p} \rtimes K$, where $K$ acts on $H^{\times p}$ by permuting the $p$ factors.
The group $H \wr K$ can also be viewed as a subgroup of $S_n$ ($n=mp$), via the natural embedding~\cite[4.1.18]{JamesKerber}.

Let us recall a lemma concerning wreath products of cyclic groups.
\begin{lemma}[\cite{vel_dihedral}, Lemma 2.1]\label{lemma:the_n_cyle_in_S_n}
Let $n$ be a positive integer with $n=mp$, and let $C_n$ be a cyclic subgroup of $S_n$ generated by an $n$-cycle.
Then $C_m \wr C_p \leq S_n$ contains an $n$-cycle.
\end{lemma}

\begin{notation}
Let $n=mp$ be odd and $C_m^{\times p}$ be a subgroup of $S_n$ generated by $\tau_i=(m(i-1)+1~m(i-1)+2~\dots~m(i-1)+m)$ for $1 \leq i \leq p$.
Define $\theta$ to be a preimage in $\tilde S_n$ of order $p$ of the generator
$(1~m+1~\dots m(p-1)+1)~(2~m+2~\dots~m(p-1)+2)~\dots~(m~2m~\dots~mp)$
of $C_p$, where $C_p$ sits inside $S_n$ by permuting $p$-many copies of $C_m$.
\end{notation}

\begin{lemma}~\label{lemma:wreath_product_of_cyclic_groups_odd_odd_Sn}
	Let $n$ be an odd positive integer with $n=mp$.
	Then $\pi^{-1}(C_m \wr C_p)$ contains a subgroup isomorphic to $C_m\wr C_p\leq S_n$.
\end{lemma}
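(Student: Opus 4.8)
The plan is to show that the double cover $\pi^{-1}(C_m\wr C_p)$ splits for purely order-theoretic reasons, so that a copy of $C_m\wr C_p$ sits inside it as a complement to the center. First I would record that since $n=mp$ is odd, both $m$ and $p$ are odd, hence the wreath product $C_m\wr C_p = C_m^{\times p}\rtimes C_p\le S_n$ has odd order $|C_m\wr C_p| = m^p p$. Writing $G:=\pi^{-1}(C_m\wr C_p)$, the projection $\pi$ restricts to a surjection $G\to C_m\wr C_p$ with kernel $\langle z\rangle\cong\mathbb{Z}_2$, which is central in $\tilde S_n$ and hence in $G$; in particular $|G| = 2m^p p$, so $\langle z\rangle$ is a central Sylow $2$-subgroup of $G$.

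The key step is then immediate: since $\gcd(2,m^p p)=1$, the Schur--Zassenhaus theorem furnishes a complement $K\le G$ with $G=\langle z\rangle K$ and $\langle z\rangle\cap K=1$. Because $\langle z\rangle$ is central this is an internal direct product $G=\langle z\rangle\times K$, and $\pi|_K\colon K\to C_m\wr C_p$ has trivial kernel $K\cap\langle z\rangle = 1$, so it is an isomorphism by the order count. Thus $K$ is a subgroup of $\tilde S_n$ isomorphic to $C_m\wr C_p\le S_n$, as required. If one prefers to avoid invoking Schur--Zassenhaus, the complement can be written down explicitly as the set $K=\{g\in G:\ g \text{ has odd order}\}$: for $g\in G$ whose image in $G/\langle z\rangle$ has (odd) order $d$ one has $g^d\in\langle z\rangle$, so exactly one of $g$ and $zg$ is a $d$-th root of the identity, i.e.\ has odd order; this distinguished lift of each element of $C_m\wr C_p$ is multiplicative because the distinguished lift of a product again has odd order modulo $\langle z\rangle$, and one checks closure directly.

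I do not expect a genuine obstacle here. The only real content is the observation that the oddness of $n$ forces $C_m\wr C_p$ to be a group of odd order, after which splitting of the extension by $\langle z\rangle$ is automatic. The remaining points — the order computation $|C_m\wr C_p| = m^p p$, centrality of $\langle z\rangle$, and the passage from a complement of a central Sylow $2$-subgroup to a direct factor isomorphic to the quotient — are routine.
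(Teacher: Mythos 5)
Your proof is correct, and it takes a genuinely different (more abstract) route than the paper. You observe that $n=mp$ odd forces $C_m\wr C_p$ to have odd order $m^pp$, so that $\langle z\rangle$ is a central Sylow $2$-subgroup of $G=\pi^{-1}(C_m\wr C_p)$, and Schur--Zassenhaus (or, equivalently, the set of odd-order elements) yields a complement isomorphic to the quotient. The paper instead argues constructively: since each disjoint $m$-cycle $\tau_i$ is an even permutation of odd order, its preimage contains a distinguished lift $w_i$ of order $m$, these lifts commute and generate a copy of $C_m^{\times p}$, and an order-$p$ lift $\theta$ of a generator of $C_p$ normalizes it, giving explicit generators of the desired $C_m\wr C_p$. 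The paper itself records your observation as a remark immediately after the lemma (every subgroup of odd order in $S_n$ lifts isomorphically to $\tilde S_n$), so your argument is fully in the spirit of the text. What the constructive proof buys is the specific generating set $w_1,\dotsc,w_p,\theta$, which the subsequent corollary and Proposition~\ref{prop:restriction_to_C_n_odd} use explicitly (e.g.\ $\eta_n=w_1\theta$ and $\eta_n^p=w_1w_2\dotsm w_p$); your existence proof establishes the lemma as stated but would need to be supplemented by identifying these elements before the later arguments can proceed. One small caveat: your parenthetical claim that the set of odd-order elements is closed under multiplication is not verified ``directly'' in your sketch and is not automatic in an arbitrary group; it is best justified exactly by the splitting $G=\langle z\rangle\times K$ you already obtained, so it should be presented as a description of $K$ rather than an independent alternative proof.
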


\begin{proof}
	Consider the elements $\tau_i=(m(i-1)+1~m(i-1)+2~\dots~m(i-1)+m)$ for all $1 \leq i \leq p$. Clearly, 
	$\tau_1,\tau_2,\dots,\tau_p$ generate the group $C_m^{\times p}\leq S_n$.
	Since $m$ is odd, $\pi^{-1}(\tau_i)$ contains an element $w_i$ of order $m$ for all $1\leq i\leq p$.
	Then the subgroup of $\tilde S_n$ generated by $w_1,w_2,\dots,w_p$ is isomorphic to $C_m^{\times p}$.
	Since $\theta$ normalizes the subgroup $\langle w_1,\dots,w_p\rangle$, the subgroup generated by $w_1,w_2,\dots,w_p$ and $\theta$ is isomorphic to $C_m \wr C_p$, and its image under the projection $\pi$ is equal to $C_m \wr C_p$.
\end{proof}

\begin{remark}
It is well known that every group of order $2k$ with odd $k$
contains a unique normal subgroup of order $k$ (a consequence of the existence of an odd permutation in the regular representation), so every subgroup of odd order in $S_n$ has an isomorphic copy in its preimage in $\tilde S_n$.
\end{remark}


By abuse of notation, we denote the isomorphic copy of $C_m\wr C_p$ in $\pi^{-1}(C_m\wr C_p)$ by $C_m \wr C_p$.

\begin{corollary}\label{corollary:the_n_cycle_in_S_n_odd}
    Let $w_1,\dots,w_p$ be as in the proof of Lemma~\ref{lemma:wreath_product_of_cyclic_groups_odd_odd_Sn}.
	The wreath product $C_m \wr C_p\leq \tilde S_n$ contains a cyclic subgroup $E_n$ of order $n$ generated by $\eta_n := w_1 \theta$.
	Moreover, $\eta_n^p=w_1 w_2 \dots w_p$.
\end{corollary}

\begin{proof}
	Using Lemma~\ref{lemma:the_n_cyle_in_S_n}, $\eta_n:= w_1\theta$ is the unique preimage of order $n$ of the $n$-cycle. The equality $\eta_n^p=w_1 w_2 \dots w_p$ follows from the fact that the equality holds when we apply $\pi$ which is an isomorphism when restricted to $C_m\wr C_p$.
\end{proof}

When $n$ is odd, all $n$-cycles of order $n$ in $\tilde S_n$ are conjugate, so we will consider the $n$-cycle $\eta_n$ and the cyclic subgroup $E_n$ generated by it.
\begin{lemma}\label{lemma:m_odd_p_nontrivial_atleast_two_distinct}
Let $n=mp>27$ be an odd composite integer, with $p$ being the smallest prime divisor of $n$.
For each linear character $\delta$ of $E_n$, there exist linear characters $\delta_1,\delta_2,\ldots,\delta_p$ of $C_m$ such that $\delta_i\neq \delta_j$ for some $i\neq j$, and 
\begin{displaymath}
    \Res^{E_n}_{\langle \eta_n^p \rangle } \delta = \Res^{C_m^{\times p}}_{ \langle w_1 w_2 \cdots w_p \rangle} (\delta_1 \times \delta_2 \times \cdots \times \delta_p).
\end{displaymath}
Moreover, if $n>27$, then there exist at least $np$ such tuples of linear characters.
\end{lemma}

\begin{proof}
	Recall that $\eta_n^p=w_1 w_2 \dots w_p$ and that $\eta_n$ has order $n$.
	Let $\zeta_m$ be a primitive $m$-th root of unity.
    Then $\delta(\eta_n^p) = \zeta_m^f$ for some $f \in \{1,\dots,m\}$.
    It is easy to see that the number of linear characters $\delta_1 \times\cdots \times \delta_p$ of $C_m^{\times p}$ with 
    \begin{displaymath}
        \Res^{E_n}_{\langle \eta_n^p \rangle } \delta = \Res^{C_m^{\times p}}_{ \langle w_1 w_2 \cdots w_p \rangle} (\delta_1 \times \delta_2 \times \cdots \times \delta_p)
    \end{displaymath}
    is $m^{p-1}$.
    Among these characters, the characters for which all $\delta_i$ are equal are given by $\delta_1(w_1)^p = \delta(\eta_n^p)$.
    Therefore, the number of such characters is at most $p$.
      Hence, we get at least $m^{p-1} - p$ many characters which satisfy all the hypothesis of our lemma. Since $m^{p-1}-p \geq np$, the latter statement of the lemma follows.
\end{proof}

We now give a useful proposition.

\begin{proposition}
	\label{prop:restriction_to_C_n_odd}
	Suppose that $n=mp$ is an odd positive integer with $m>1$ and $p$ a prime.
	Let $W$ be an irreducible spin representation of $\tilde S_n$, and let $V\otimes V_1 \otimes V_2 \otimes \cdots \otimes V_p$ be an irreducible spin representation of $\tilde S_{(m^p)}$ such that
	\begin{displaymath}
		\Res^{\tilde S_n}_{\tilde S_{(m^p)}} W\geq V\otimes V_1 \otimes V_2 \otimes \cdots \otimes V_p.
	\end{displaymath}
	If $\Res^{\tilde S_m}_{C_m} V_i \geq \delta_i$, where $\delta_i$ is a linear representation of $C_m$ for all $i\in [p]$ and at least two of the $\delta_i$ are distinct, then 
	\begin{displaymath}
		\Res^{\tilde S_{n}}_{E_n} W \geq \delta
	\end{displaymath}
	for every linear subrepresentation $\delta$ of $\Ind_{\langle \eta_n^p \rangle}^{E_n} \Res^{C_m^{\times p}}_{ \langle \eta_n^p \rangle} (\delta_1 \otimes \delta_2 \otimes \cdots \otimes \delta_p)$.
\end{proposition}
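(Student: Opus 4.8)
The plan is to trace how $E_n$ sits inside $\tilde S_{(m^p)}\rtimes\langle\theta\rangle$ and to push the one-dimensional constituents of the $V_i$ through the Clifford-product structure. First I would observe that $E_n=\langle\eta_n\rangle$ with $\eta_n=w_1\theta$, so $\eta_n^p=w_1w_2\cdots w_p\in C_m^{\times p}\subset\tilde S_{(m^p)}$, while $\eta_n$ itself permutes the $p$ tensor factors cyclically (conjugation by $\theta$). The key point is that $E_n$ is generated by $C_m^{\times p}\cap E_n=\langle\eta_n^p\rangle$ together with $\eta_n$, and that $E_n$ normalizes the diagonal subgroup $\langle\eta_n^p\rangle$. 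So to locate a one-dimensional $E_n$-subrepresentation inside $\Res^{\tilde S_n}_{E_n}W$, the natural route is Clifford theory for the normal subgroup $\langle\eta_n^p\rangle\trianglelefteq E_n$: a character $\delta$ of $E_n$ restricts on $\langle\eta_n^p\rangle$ to some character $\delta_0$, and conversely each $E_n$-stable character of $\langle\eta_n^p\rangle$ extends to $E_n$ (the quotient $E_n/\langle\eta_n^p\rangle\cong C_p$ is cyclic, so extensions always exist), and $\Ind^{E_n}_{\langle\eta_n^p\rangle}\delta_0$ is a sum of the $p$ such extensions.

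Next I would restrict in stages. By hypothesis $\Res^{\tilde S_n}_{\tilde S_{(m^p)}}W$ contains $V\otimes V_1\otimes\cdots\otimes V_p$, and by Proposition~\ref{prop:reduced_Clifford_product} (or just by direct inspection of the reduced-Clifford action for even elements) the subgroup $C_m^{\times p}\subset\tilde S_{(m^p)}$, all of whose elements project to even permutations, acts on $V\otimes V_1\otimes\cdots\otimes V_p$ through $V_1\otimes\cdots\otimes V_p$ (the $V$-factor and the associator/Clifford operators are trivial on even elements). Hence $\Res^{\tilde S_n}_{C_m^{\times p}}W\geq\Res^{\tilde S_m}_{C_m}V_1\otimes\cdots\otimes\Res^{\tilde S_m}_{C_m}V_p\geq\delta_1\otimes\cdots\otimes\delta_p$. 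Now restrict this one-dimensional $C_m^{\times p}$-representation to $\langle\eta_n^p\rangle$: it becomes the character $\delta_0:=\Res^{C_m^{\times p}}_{\langle\eta_n^p\rangle}(\delta_1\otimes\cdots\otimes\delta_p)$, so $\Res^{\tilde S_n}_{\langle\eta_n^p\rangle}W\geq\delta_0$. The hypothesis that at least two $\delta_i$ are distinct is what guarantees $\delta_1\otimes\cdots\otimes\delta_p$ is \emph{not} $\eta_n$-invariant, i.e.\ $\delta_0$ has a genuine $C_p$-orbit of size $p$ inside $\widehat{C_m^{\times p}}$; this will matter for the multiplicity bookkeeping below (and it forbids the degenerate case where $W$ could restrict to $\langle\eta_n^p\rangle$ with that character appearing only through a single non-extending piece).

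Finally I would run the Frobenius-reciprocity/Clifford argument upstairs at the level of $E_n$. Since $\delta_0$ occurs in $\Res^{\tilde S_n}_{\langle\eta_n^p\rangle}W$, Frobenius reciprocity gives that $\Ind^{\tilde S_n}_{\langle\eta_n^p\rangle}\delta_0$ shares a constituent with $W$, hence some $E_n$-constituent $\delta$ of $W$ restricts to $\delta_0$ on $\langle\eta_n^p\rangle$; because $[E_n:\langle\eta_n^p\rangle]=p$ and $E_n/\langle\eta_n^p\rangle$ is cyclic, \emph{every} such $\delta$ is one-dimensional and is one of the $p$ extensions of $\delta_0$, i.e.\ $\delta\in\Ind^{E_n}_{\langle\eta_n^p\rangle}\delta_0$ as in the statement. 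To upgrade ``some $\delta$'' to ``for all $\delta$'', I would use that $W$ is $\tilde S_n$-stable and in particular stable under conjugation by $\theta$ (indeed by any element of $E_n$ mapping to a generator of $C_p$): conjugation by $\theta$ cyclically permutes the tensor factors $V_1,\dots,V_p$, hence permutes the characters $\delta_1,\dots,\delta_p$ cyclically, hence fixes $\delta_0$ (it is symmetric) but transitively permutes its $p$ extensions to $E_n$; since $W$ is $\theta$-invariant, if one extension occurs in $\Res^{\tilde S_n}_{E_n}W$ then all $p$ of them do. This yields $\Res^{\tilde S_n}_{E_n}W\geq\delta$ for every one-dimensional subrepresentation $\delta$ of $\Ind^{E_n}_{\langle\eta_n^p\rangle}\delta_0$, which is exactly the claim. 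The main obstacle I anticipate is making the ``$\theta$ cyclically permutes the $\delta_i$, hence permutes the extensions transitively'' step fully rigorous: one must check that $\theta$-conjugation genuinely acts as the cyclic shift on $C_m^{\times p}$ compatibly with the identification $\langle\eta_n^p\rangle=$ diagonal, and that the $C_p$-action it induces on the fibre $\{$extensions of $\delta_0\}$ is transitive precisely when $\delta_0$ is $\eta_n$-fixed but $\delta_1\otimes\cdots\otimes\delta_p$ is not $\eta_n$-fixed — which is where the ``at least two $\delta_i$ distinct'' hypothesis is consumed.
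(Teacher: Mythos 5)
Your first two stages (the evenness of the $w_i$ making $\eta_n^p$ act diagonally through $V_1\otimes\cdots\otimes V_p$, hence $\Res^{\tilde S_n}_{C_m^{\times p}}W\geq\delta_1\otimes\cdots\otimes\delta_p$ and $\Res^{\tilde S_n}_{\langle\eta_n^p\rangle}W\geq\delta_0$) match the paper. But the final step has a genuine gap. Frobenius reciprocity from $\langle\eta_n^p\rangle$ up to $E_n$ only gives you that \emph{some} extension of $\delta_0$ occurs in $\Res_{E_n}W$, and your mechanism for upgrading this to \emph{all} $p$ extensions --- ``conjugation by $\theta$ transitively permutes the $p$ extensions of $\delta_0$ to $E_n$, and $W$ is $\theta$-invariant'' --- does not work, because $\theta$ does not normalize $E_n$. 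Indeed $\theta\eta_n\theta^{-1}=\theta w_1\theta^{-1}\cdot\theta=w_2\theta$, and $w_2\theta\notin\langle w_1\theta\rangle$ for $p\geq 3$ (the elements of $E_n$ of the form $x\theta$ with $x\in C_m^{\times p}$ are exactly $w_1(\eta_n^p)^k\theta$). So $\theta$-conjugation induces no action at all on the character group of $E_n$, and the transitivity claim you flag as ``the main obstacle'' is not merely delicate but false as stated.

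The paper closes exactly this gap by interposing the wreath product $C_m\wr C_p$ between $C_m^{\times p}$ and $E_n$. Because at least two of the $\delta_i$ are distinct and $p$ is prime, the stabilizer of $\delta_1\otimes\cdots\otimes\delta_p$ under the $C_p$-action is trivial, so $\Ind_{C_m^{\times p}}^{C_m\wr C_p}(\delta_1\otimes\cdots\otimes\delta_p)$ is \emph{irreducible}; Frobenius reciprocity then forces this entire $p$-dimensional representation to be a subrepresentation of $\Res^{\tilde S_n}_{C_m\wr C_p}W$, and Mackey's formula (there is a single $(C_m^{\times p},E_n)$-double coset since $E_n$ surjects onto $C_p$) identifies its restriction to $E_n$ with $\Ind_{\langle\eta_n^p\rangle}^{E_n}\delta_0$, which is the sum of all $p$ extensions of $\delta_0$. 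This is where the hypothesis on the $\delta_i$ is actually consumed; without passing through $C_m\wr C_p$ (or some equivalent device controlling the multiplicity of $\delta_0$ and its distribution among extensions), your argument only yields one of the $p$ characters $\delta$.
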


\begin{proof}
Recall that $\eta_n^p=w_1w_2\ldots w_p$. Since each $w_i$ is even, using Eq.~\eqref{eq:action_in_reduced_Clifford_product}, the action of $\eta_n^p$ on the reduced Clifford product $V\otimes V_1 \otimes V_2 \otimes \cdots \otimes V_p$ is given by 
	\begin{displaymath}
		w_1 w_2 \dots w_p (v \otimes v_1 \otimes v_2 \otimes \cdots \otimes v_p) = v \otimes w_1 v_1 \otimes w_2 v_2 \otimes \cdots \otimes w_p v_p.
	\end{displaymath}

	By hypothesis, we have $\Res^{\tilde S_m}_{C_m} V_i \geq \delta_i$ for all $i\in [p]$, so
	\begin{align*}
		\Res^{\tilde S_{(m^p)}}_{C_m^{\times p}} V\otimes V_1 \otimes V_2 \otimes \cdots \otimes V_p &\geq \delta_1 \otimes \delta_2 \otimes \cdots \otimes \delta_p.
	\end{align*}
	Since at least two $\delta_i$ are non-isomorphic and $p$ is prime, we obtain that
	\begin{displaymath}
		\Ind_{C_m^{\times p}}^{C_m \wr C_p} (\delta_1 \otimes \delta_2 \otimes \cdots \otimes \delta_p)
	\end{displaymath}
	is irreducible.
	Hence, using Frobenius reciprocity, we have
	\begin{align*}
		\Res^{\tilde S_{n}}_{C_m \wr C_p} W &\geq \Ind_{C_m^{\times p}}^{C_m \wr C_p} (\delta_1 \otimes \delta_2 \otimes \cdots \otimes \delta_p).
	\end{align*}
	Restricting further to $E_n$ and using Mackey's restriction formula, we obtain
	\begin{align*}
		\Res^{\tilde S_{n}}_{E_n} W &\geq \Res^{C_m \wr C_p}_{E_n} \Ind_{C_m^{\times p}}^{C_m \wr C_p} (\delta_1 \otimes \delta_2 \otimes \cdots \otimes \delta_p)\\
		&= \Ind_{\langle \eta_n^p \rangle}^{E_n} \Res^{C_m^{\times p}}_{ \langle \eta_n^p \rangle} (\delta_1 \otimes \delta_2 \otimes \cdots \otimes \delta_p).
	\end{align*}
	This completes the proof.
\end{proof}

\noindent\textbf{Suppose that $n=2m$ is even.}

In this case, we discuss the irreducible characters of $\widetilde{C_m\times C_m}:=\pi^{-1}(C_m\times C_m) $ and $\widetilde{C_m \wr C_2}:=\pi^{-1}(C_m \wr C_2).$ Choose elements $w_1, w_2 \in \widetilde{C_m \times C_m}$ such that $\pi(w_1) = (1~2~\dots~m),$ and $\pi(w_2) = (m+1~m+2~\dots~2m).$
Let $\eta_n\in \widetilde{C_m \wr C_2}$ have cycle type $(n)$ such that $\eta_n^2= \pm w_1 w_2,$ thanks to Lemma~\ref{lemma:the_n_cyle_in_S_n}.
Denote the subgroup generated $\eta_n$ by $E_n$.
Let $\theta$ be a preimage of an element of $C_m \wr C_2$ generating $C_2$.

This subsection divided into three cases, depending on the parity of $m$ and the orders of $w_1$ and $w_2$.
\subsubsection{Suppose that $n=2m$ and $m$ is odd.}\label{subsubsection:even_odd_odd_case}
In this case, we may replace $w_1$ (resp. $w_2$) by $-w_1$ (resp. $-w_2$) if necessary, so that $w_1$ and $w_2$ have order $m$.
Since $m$ is odd, 
$\widetilde{C_m\times C_m}=\langle w_1,w_2,-1 \rangle \cong C_m \times C_m \times \langle -1 \rangle .$
Hence, all irreducible spin characters of $\widetilde{C_m\times C_m}$ are one-dimensional and have the form
\begin{displaymath}
\delta_1 \otimes \delta_2 \otimes (-\mathbb{1}).
\end{displaymath}
Since $w_1$ and $-w_2$ (resp. $w_2$ and $-w_1$) have different orders, we infer that $\theta w_1 \theta^{-1} = w_2$ and $\theta w_2 \theta^{-1} = w_1$.
A simple application of Clifford theory yields the following lemma.
\begin{lemma}
	\label{lemma:Wreath_product_of_cyclic_groups_even_odd_odd_case}
The characters $\Ind_{\widetilde{C_m\times C_m}}^{\widetilde{C_m \wr C_2}} (\delta_1 \otimes \delta_2 \otimes (-\mathbb{1})) $ are irreducible whenever $\delta_1 \neq \delta_2$.
\end{lemma}

Recall that $\eta_n^2=\pm w_1 w_2$.
We now prove the analogue of Proposition~\ref{prop:restriction_to_C_n_odd} for the subgroup $E_n := \langle \eta_n\rangle$.
\begin{proposition}
	\label{prop:restriction_to_C_n_even_odd_odd_case}
	Let $W$ and $V\otimes V_1 \otimes V_2$ be irreducible spin representations of $\tilde S_n$ and $\tilde S_{(m^2)}$ respectively, with
	$\Res^{\tilde S_n}_{\tilde S_{(m^2)}} W \geq V \otimes V_1 \otimes V_2.$
	Suppose that $\Res^{\tilde S_m}_{C_m} V_i \geq \delta_i$, where $\delta_i$ is a linear representation of $C_m$ for $i=1,2$, and $\delta_1 \neq \delta_2$.
	Then
	\begin{displaymath}
	\Res^{\tilde S_{n}}_{E_n} W \geq  \delta
	\end{displaymath}
	for every linear subrepresentation $\delta$ of
	\begin{displaymath}
	\Ind_{\langle \eta_n^2 \rangle}^{E_n} \Res^{\langle w_1, w_2, -1 \rangle}_{ \langle \eta_n^2 \rangle} (\delta_1 \otimes \delta_2\otimes (-\mathbb{1})).
	\end{displaymath}
\end{proposition}
\begin{proof}
	Recall that $\eta_n^2=\pm w_1w_2$.
	Since each $w_i$ is even, the action of $\eta_n^2$ on the reduced Clifford product $V\otimes V_1 \otimes V_2$ using Eq.~\eqref{eq:action_in_reduced_Clifford_product} is given by
	\begin{displaymath}
		w_1 w_2 (v \otimes v_1 \otimes v_2) = v \otimes w_1 v_1 \otimes w_2 v_2.
	\end{displaymath}

	By hypothesis, $\Res^{\tilde S_m}_{C_m} V_i \geq \delta_i$ for $i=1,2$, so
	\begin{align}\label{eq:restriction_to_C_n_even_odd_odd_case}
		\Res^{\tilde S_{(m^2)}}_{\widetilde{C_m^{\times 2}}} V\otimes V_1 \otimes V_2
		\geq \delta_1 \otimes \delta_2 \otimes (-\mathbb{1}).
	\end{align}
	Since $\delta_1 \neq \delta_2$, Lemma~\ref{lemma:Wreath_product_of_cyclic_groups_even_odd_odd_case} implies that
	\begin{displaymath}
		\Ind_{\widetilde{C_m^{\times 2}}}^{\widetilde{C_m \wr C_2}} (\delta_1 \otimes \delta_2 \otimes (-\mathbb{1}))
	\end{displaymath}
	is irreducible.
	Hence, by Frobenius reciprocity and Eq.~\eqref{eq:restriction_to_C_n_even_odd_odd_case},
	\begin{displaymath}
		\Res^{\tilde S_{n}}_{\widetilde{C_m \wr C_2}} W
		\geq
		\Ind_{\widetilde{C_m^{\times 2}}}^{\widetilde{C_m \wr C_2}} (\delta_1 \otimes \delta_2 \otimes (-\mathbb{1})).
	\end{displaymath}
	Restricting further to $E_n$ and applying Mackey's restriction formula, we get
	\begin{align*}
		\Res^{\tilde S_{n}}_{E_n} W
		&\geq \Res^{\widetilde{C_m \wr C_2}}_{E_n}
		\Ind_{\widetilde{C_m^{\times 2}}}^{\widetilde{C_m \wr C_2}} (\delta_1 \otimes \delta_2 \otimes (-\mathbb{1}))\\
		&=
		\Ind_{\langle \eta_n^2 \rangle}^{E_n}
		\Res^{\widetilde{C_m^{\times 2}}}_{ \langle \eta_n^2 \rangle}
		(\delta_1 \otimes \delta_2\otimes (-\mathbb{1})).
	\end{align*}
	This completes the proof.
\end{proof}

We now prove an analogue of Lemma~\ref{lemma:m_odd_p_nontrivial_atleast_two_distinct} for the subgroup $E_n=\langle \eta_n\rangle$.
\begin{lemma}\label{lemma:m_odd_p_nontrivial_atleast_two_distinct_even_odd_case}
	For each linear spin character $\delta$ of $E_n$, there exist at least $m-2$ distinct pairs of linear spin characters $\delta_1,\delta_2$ of $\langle w_1 \rangle $ and $\langle w_2 \rangle $, respectively, such that $\delta_1 \neq \delta_2$ and
	\begin{displaymath}
		\Res^{E_n}_{\langle \eta_n^2 \rangle } \delta
		=
		\Res^{\langle w_1, w_2, -1 \rangle}_{ \langle \eta_n^2 \rangle}
		(\delta_1 \otimes \delta_2\otimes (-\mathbb{1})).
	\end{displaymath}
\end{lemma}
\begin{proof}
	Let $\delta$ be a linear spin character of $E_n$.
	Then there exist linear spin characters $\gamma_1,\gamma_2$ of $\langle w_1 \rangle $ and $\langle w_2 \rangle $, respectively,  such that
	\begin{displaymath}
	\Res^{E_n}_{\langle \eta_n^2 \rangle } \delta
	=
	\Res^{\langle w_1, w_2, -1 \rangle}_{ \langle \eta_n^2 \rangle}
	(\gamma_1 \otimes \gamma_2\otimes (-\mathbb{1})).
	\end{displaymath}
	Take any linear spin character $\delta_1$ of $C_m$, and define $\delta_2$ by letting
	\begin{displaymath}
	\delta_2(w_2) = \gamma_1(w_1)\gamma_2(w_2)\,\delta_1(w_1)^{-1}.
	\end{displaymath}
	There are $m$ choices for $\delta_1$, hence $m$ choices for $(\delta_1,\delta_2)$.
	Moreover, $\delta_1=\delta_2$ if and only if
	\begin{displaymath}
	\delta_1(w_1)^2=\gamma_1(w_1)\gamma_2(w_2),
	\end{displaymath}
	which has at most two solutions.
	Therefore, there are at least $m-2$ choices of $(\delta_1,\delta_2)$ with $\delta_1\neq\delta_2$.
	Each such pair satisfies
	\begin{displaymath}
	\Res^{E_n}_{\langle \eta_n^2 \rangle } \delta
	=
	\Res^{\langle w_1, w_2, -1 \rangle}_{ \langle \eta_n^2 \rangle}
	(\delta_1 \otimes \delta_2\otimes (-\mathbb{1})).
	\end{displaymath}
\end{proof}


\subsubsection{Suppose that $n=2m$ with $m$ even and, both $w_1$ and $w_2$ have order $m$.}\label{case:even_even_order=m}

We may multiply $w_1$ by $-1$, if necessary, and assume that $\eta_n^2=w_1w_2$.
Notice that $w_1 w_2 = -w_1w_2$ and $w_1 w_2^2 = w_2^2 w_1$.
Then $H:=\langle w_1,w_2^2,-1 \rangle$ is an index $2$ Abelian subgroup of $\widetilde{C_m\times C_m}$ and $H$ is isomorphic to $C_m \times C_{\frac{m}{2}} \times \langle -1 \rangle $, where $C_m=\langle w_1 \rangle$, and $C_{\frac{m}{2}}=\langle w_2^2 \rangle$.
	All irreducible spin characters of $H$ are of the form $\psi:=\delta_1 \otimes   {\delta_2} \otimes (-\mathbb{1})$, 
	where $\delta_1$ and $  {\delta_2}$ are linear characters of $\langle w_1 \rangle$ and $\langle w_2^2 \rangle$, respectively.
	Since $\psi(w_2 w_1 w_2^{-1})=\psi(- w_1)= -\psi(w_1)$, we have $\psi^{w_2} \neq \psi$.
	Hence, by Clifford theory, $\tilde\psi:=\Ind_{H}^{\widetilde{C_m\times C_m}} \psi$ is irreducible.
	Since there are $m^2$ linear characters of $\widetilde{C_m\times C_m}$, we obtain the following lemma.
	\begin{lemma}
		\label{lemma:Wreath_product_of_cyclic_groups_even_even_order=m_case}
		Let $H= \langle w_1,w_2^2,-1 \rangle$ be a subgroup of $\widetilde{C_m\times C_m}$.
		Then all irreducible spin characters $\tilde\psi$ of $\widetilde{C_m\times C_m}$ are two-dimensional and are given by
		\begin{displaymath}
		\tilde\psi:=\Ind_{H}^{\widetilde{C_m\times C_m}} (\delta_1 \otimes   {\delta_2} \otimes (-\mathbb{1})),
		\end{displaymath}
		where $\delta_1$ and $  {\delta_2}$ are linear characters of $\langle w_1 \rangle$ and $\langle w_2^2 \rangle$, respectively.
		All remaining irreducible characters of $\widetilde{C_m\times C_m}$ are linear, where $-1$ acts trivially.
		In particular, the irreducible characters of $\widetilde{C_m\times C_m}$ consist of $m^2$ linear characters and $m^2/4$ irreducible characters of degree $2$.
	\end{lemma}

	Recall that $\theta$ is an element of $\widetilde{C_m \wr C_2}$ whose image generates $C_2$.
	Then $\theta w_1^2 \theta^{-1}= w_2^2$ or $- w_2^2$.
	If $\delta_1(w_1^2) \neq \pm  {\delta_2}(w_2^2)$, then $\psi^{\theta} \ncong \psi$.
	Hence, $\Ind_{ \widetilde{C_m\times C_m}}^{\widetilde{C_m \wr C_2}} \tilde\psi$ is irreducible.
	Therefore, we obtained the following lemma.
	\begin{lemma}
		\label{lemma:Wreath_product_of_cyclic_groups_even_even_order=m_case_induced}
		Let $\tilde{\psi}~ (=\Ind_{H}^{\widetilde{C_m\times C_m}} (\delta_1 \otimes   {\delta_2} \otimes (-\mathbb{1})))$ be an irreducible spin character of $\widetilde{C_m\times C_m}$.
		If $\tilde\psi(w_1^2)\neq \pm \tilde \psi(w_2^2)$, then $\Ind_{ \widetilde{C_m\times C_m}}^{\widetilde{C_m \wr C_2}} \tilde\psi$ is irreducible.
	\end{lemma}
	\begin{proof}
    Since $\tilde\psi(w_1^2)= \pm 2\delta_1(w_1^2)$ and $\tilde\psi(w_2^2)=2  {\delta_2}(w_2^2)$, $\tilde\psi(w_1^2)= \pm \tilde\psi(w_2^2)$ if and only if $\delta_1(w_1^2) \neq \pm  {\delta_2}(w_2^2)$.
	As observed above, if $\delta_1(w_1^2) \neq \pm  {\delta_2}(w_2^2)$, then $\Ind_{ \widetilde{C_m\times C_m}}^{\widetilde{C_m \wr C_2}} \tilde\psi$ is irreducible.
	\end{proof}

	We also need the following lemma.
	\begin{lemma}\label{lemma:existence_of_characters_even_even_case}
		For each linear spin character $\delta$ of $E_n$, there exist at least $m-8$ distinct pairs of linear characters $\delta_1,  {\delta_2}$ of $\langle w_1 \rangle$ and $\langle w_2^2 \rangle$, respectively, such that $\delta_1(w_1^2) \neq \pm  {\delta_2}(w_2^2)$ and
		\begin{displaymath}
		\Res^{E_n}_{\langle \eta_n^4 \rangle} \delta
		=
		\Res^{\langle w_1, w_2^2, -1 \rangle}_{ \langle \eta_n^4 \rangle} (\delta_1 \otimes   {\delta_2} \otimes (-\mathbb{1})).
		\end{displaymath}
	\end{lemma}
	\begin{proof}
        Let $\delta_1$ be any linear spin character of $\langle w_1 \rangle$.
        Since $\eta_n^4 = -w_1^2 w_2^2$, we define $   \delta_2$ by letting 
        \begin{displaymath}
              \delta_2(w_2^2)= - \delta(-w_1^2 w_2^2)    \delta_1(w_1^2)^{-1}.
        \end{displaymath}
        Since $\delta(\eta_n^4) = \delta_1(w_1^2)    \delta_2(w_2^2) (-1)$, it follows that 
        \begin{displaymath}
		\Res^{E_n}_{\langle \eta_n^4 \rangle} \delta
		=
		\Res^{\langle w_1, w_2^2, -1 \rangle}_{ \langle \eta_n^4 \rangle} (\delta_1 \otimes   {\delta_2} \otimes (-\mathbb{1})).
		\end{displaymath}
    	There are $m$ choices for $  {\delta_1}$, and hence there are $m$ such pairs $(  {\delta_1},   {\delta_2})$.
		Note that $  {\delta_1} = \pm   {\delta_2}$ if and only if $  {\delta_1}(w_1)^4 = \mp  \delta(-w_1^2w_2^2)$, which has at most eight solutions.
		Therefore, there are at least $m-8$ choices for $(  {\delta_1},   {\delta_2})$ such that $  {\delta_1} \neq \pm   {\delta_2}$. This completes the proof.
	\end{proof}

\subsubsection{Suppose that $n=2m$ with $m$ even and, both $w_1$ and $w_2$ have order $2m$.}\label{Subcase:even_even_order_2m_case}
Similarly to the previous case,
we may assume that $\eta_n^2 = w_1 w_2$.
Let $H=\langle w_1,w_2^2 \rangle $. Then $H$ is isomorphic to $\langle w_1 \rangle \times \langle w_1^2 w_2^2 \rangle$.
Hence, all irreducible characters of $H$ are of the form $\delta_1 \otimes   {\delta_2}$, where $\delta_1$ and $  {\delta_2}$ are linear characters of $\langle w_1 \rangle$ and $\langle w_1^2 w_2^2 \rangle$, respectively.
As in the previous case, there are $m^2$ linear characters of $\widetilde{C_m\times C_m}$ on which $-1$ acts trivially.
Let $\psi:=\delta_1 \otimes   {\delta_2}$ be an irreducible character of $H$ with $\delta_1(-1)=-1$.
Observe that $\psi(w_2 w_1 w_2^{-1})=\psi(- w_1)= -\psi(w_1)$.
Therefore, $\psi^{w_2} \neq \psi$.
By Clifford theory, $\tilde\psi:=\Ind_{H}^{\widetilde{C_m\times C_m}} \psi$ is irreducible and has dimension $2$.
Thus, we have the following lemma.
\begin{lemma}
	\label{lemma:Wreath_product_of_cyclic_groups_even_even_order=2m_case}
	Let $H= \langle w_1,w_1^2 w_2^2 \rangle$ be a subgroup of $\widetilde{C_m\times C_m}$.
	Then all irreducible spin characters $\tilde\psi$ of $\widetilde{C_m\times C_m}$ are two-dimensional and are given by $\tilde\psi:=\Ind_{H}^{\widetilde{C_m\times C_m}} (\delta_1 \otimes   {\delta_2})$, where $\delta_1$ and $  {\delta_2}$ are linear characters of $\langle w_1 \rangle$ and $\langle w_1^2 w_2^2 \rangle$, respectively, with $\delta_1(-1)=-1$.
	All remaining irreducible characters of $\widetilde{C_m\times C_m}$ are linear, where $-1$ acts trivially.
	In particular, the irreducible characters of $\widetilde{C_m\times C_m}$ consist of $m^2$ linear characters and $m^2/4$ irreducible characters of degree $2$.
\end{lemma}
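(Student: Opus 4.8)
The plan is to imitate the argument of Lemma~\ref{lemma:Wreath_product_of_cyclic_groups_even_even_order=m_case}, working now in the subcase where $w_1$ and $w_2$ have order $2m$. Write $G:=\widetilde{C_m\times C_m}$. The only relation used throughout is $w_1w_2=zw_2w_1$, which holds because the chosen lifts of the two disjoint $m$-cycles are products of $t_i$'s with indices differing by at least $2$ and $(m-1)^2$ is odd for $m$ even. From this one extracts $w_2w_1w_2^{-1}=zw_1$, that $w_1$ commutes with $w_2^2$ (hence $w_1^2$ commutes with $w_2^2$), and $[w_1,w_2]=z$. I would also record that $\pi(w_i)$ has order $m$, so $w_i^m\in\langle z\rangle$, and since $w_i$ has order $2m$ this forces $w_1^m=w_2^m=z$.

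First I would pin down the structure of $H=\langle w_1,w_1^2w_2^2\rangle$. Since $w_1^2$ and $w_2^2$ commute, $H$ is abelian and $(w_1^2w_2^2)^k=w_1^{2k}w_2^{2k}$; using $w_1^m=w_2^m=z$ one checks this equals $1$ exactly when $m\mid 2k$, so $w_1^2w_2^2$ has order $m/2$. A short computation (if $w_1^i=(w_1^2w_2^2)^j$ with $0\le j<m/2$, then $w_2^{2j}\in\langle w_1\rangle\cap\langle w_2\rangle\subseteq\langle z\rangle$, and the cases $w_2^{2j}=1$, $w_2^{2j}=z$ both force $j=0$) shows $\langle w_1\rangle\cap\langle w_1^2w_2^2\rangle=1$, so $H=\langle w_1\rangle\times\langle w_1^2w_2^2\rangle$ has order $2m\cdot(m/2)=m^2$; hence $[G:H]=2$ and $H\trianglelefteq G$. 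Inside $H$ the central element is $z=w_1^m\in\langle w_1\rangle$, so for a linear character $\rho=\delta_1\otimes\delta_2$ of $H$ we have $\rho(z)=\delta_1(w_1^m)=\delta_1(-1)$; thus $\rho$ is a spin character of $H$ exactly when $\delta_1(-1)=-1$.

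Next I would run the Clifford-theory argument. Since $G/\langle z\rangle\cong C_m\times C_m$ is abelian and $[w_1,w_2]=z$, we have $[G,G]=\langle z\rangle$; hence every one-dimensional representation of $G$ kills $z$, and conversely any irreducible on which $z$ acts trivially factors through $C_m\times C_m$ and is one-dimensional. This proves the "remaining representations" clause. Now let $\tilde\rho$ be an irreducible spin representation of $G$. Then $z$ acts by $-1$ on $\Res_H\tilde\rho$, so every irreducible (hence linear) constituent $\rho=\delta_1\otimes\delta_2$ of $\Res_H\tilde\rho$ satisfies $\delta_1(-1)=-1$. Its $w_2$-conjugate satisfies $\rho^{w_2}(w_1)=\rho(w_2w_1w_2^{-1})=\rho(zw_1)=-\rho(w_1)\ne\rho(w_1)$, so $\rho$ is not $G$-invariant; as $[G:H]=2$ its inertia group is exactly $H$, and Clifford theory forces $\tilde\rho=\Ind_H^G\rho$, which has dimension $[G:H]=2$. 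The same non-invariance computation, applied to an arbitrary linear character $\delta_1\otimes\delta_2$ of $H$ with $\delta_1(-1)=-1$, shows $\Ind_H^G(\delta_1\otimes\delta_2)$ is irreducible of dimension $2$, and it is a spin representation because $z$ acts by $\delta_1(-1)=-1$. Together these give the claimed classification.

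I do not expect a genuine obstacle here; the delicate points are purely bookkeeping: keeping straight which power of $w_i$ equals $z$ in the order computations for $H$, confirming the internal direct-product decomposition, and checking that the inertia group of $\rho$ is $H$ rather than all of $G$. As an optional closing sanity check one can verify the dimension count $2m^2=m^2\cdot 1^2+(m^2/4)\cdot 2^2$, which is consistent since $m$ is even.
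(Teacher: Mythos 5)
Your proposal is correct and follows essentially the same route as the paper: identify $H$ as the internal direct product $\langle w_1\rangle\times\langle w_1^2w_2^2\rangle$ of index $2$, observe that a spin linear character $\rho$ satisfies $\rho^{w_2}(w_1)=\rho(zw_1)=-\rho(w_1)\ne\rho(w_1)$, and conclude by Clifford theory that $\Ind_H^{\widetilde{C_m\times C_m}}\rho$ is irreducible of dimension $2$, with the one-dimensional representations being exactly those killing $z=[w_1,w_2]$. Your write-up merely supplies the order and intersection computations and the dimension count that the paper asserts without detail.
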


As before, $\theta w_1^2 \theta^{-1}= \pm w_2^2$.
Therefore, if $\delta_1(w_1^2) \neq \pm \delta_1(w_1^{-2})  {\delta_2}(w_1^2 w_2^2)$, then $\tilde\psi^{\theta} \ncong \tilde\psi$.
Once again, by Clifford theory, we have the following lemma.
\begin{lemma}
	\label{lemma:Wreath_product_of_cyclic_groups_even_even_order=2m_case_induced}
	Suppose that $n=2m$ with $m$ even.
	If $\tilde{\psi}$ is an irreducible spin representation of $\widetilde{C_m\times C_m}$ such that $\tilde{\psi}(w_1^2)\neq \pm \tilde{\psi}(w_2^2)$, then $\Ind_{ \widetilde{C_m\times C_m}}^{\widetilde{C_m \wr C_2}} \tilde\psi$ is irreducible.
\end{lemma}

\begin{lemma}\label{lemma:existence_of_characters_even_even_2m_case}
	Let $n=2m$ with $m\geq 8$ even, and let $w_1,w_2$ be of order $2m$.
	If $\delta$ is a linear spin character of $E_n=\langle \eta_n \rangle$, then there exist at least  $m-8$ distinct pairs of linear characters $\delta_1$ and $  {\delta_2}$ of $\langle w_1 \rangle$ and $\langle w_1^2w_2^2 \rangle$, respectively, such that
	\begin{displaymath}
	 \Ind_{\langle \eta_n^4 \rangle}^{E_n} \Res^{\langle w_1, w_2^2 \rangle}_{ \langle \eta_n^4 \rangle} (\delta_1 \otimes   {\delta_2}) \geq \delta
	\end{displaymath}
	and $\delta_1(w_1^2) \neq \pm \delta_1(w_1^{-2})  {\delta_2}(w_1^2 w_2^2)$.
\end{lemma}
\begin{proof}
	Let $\delta$ be a linear spin character of $E_n$.
	Take any linear spin character $\delta_1$ of $\langle w_1 \rangle$, i.e., $\delta_1(-1)=-1$.
    Since $\eta_n^4 = - w_1^2 w_2^2$, we define $  {\delta_2}$ by letting $  {\delta_2}(w_1^2 w_2^2) =   -\delta(-w_1^2 w_2^2)$.
	Then $\Res^{\langle w_1, w_2^2 \rangle}_{ \langle \eta_n^4 \rangle} (\delta_1 \otimes   {\delta_2}) = \Res^{E_n}_{\langle \eta_n^4 \rangle} \delta$.
	Note that this gives $m$ pairs $({\delta_1},   {\delta_2})$.
    The number of pairs $(\delta_1,   \delta_2)$ for which 
    $\delta_1(w_1^2) \neq \pm \delta_1(w_1^{-2})  {\delta_2}(w_1^2 w_2^2)$, equivalently,  $\delta_1(w_1)^4 = \pm {  \delta_2}(w_1^2 w_2^2)$ has at at most $8$ solutions.
	Therefore, there are at least $m-8$ choices for $(\delta_1,   {\delta_2})$ such that $\delta_1(w_1^2) \neq \pm \delta_1(w_1^{-2})  {\delta_2}(w_1^2 w_2^2)$ and $\Res^{\langle w_1, w_2^2 \rangle}_{ \langle \eta_n^4 \rangle} (\delta_1 \otimes   {\delta_2}) = \Res^{E_n}_{\langle \eta_n^4 \rangle} \delta$.
\end{proof}

\subsection*{Proof of Theorem~\ref{theorem:cycle_double_S_n}}
We prove the theorem by induction on $n$.
We verify the theorem directly using Sage for all $n\leq 27$.
Therefore, we may assume that $n>27$.
Assume that the theorem holds for all positive integers less than $n$, and we shall prove it for $n$.
Let $\lambda\in\DP_n$, and let $V_\lambda$ be an irreducible spin representation of $\tilde S_n$ indexed by $\lambda$.

If $n$ is prime, then the theorem follows from Lemma~\ref{lemma:zalesskii_prime_sym}.
So we may assume that $n$ is composite.

\textbf{Case 1:} Let $n$ be an odd positive integer with $n=mp$, where $p$ is the smallest prime divisor of $n$.
Since $n>27$, we have $m\geq 7$.
Let $\eta_n$ be an $n$-cycle in $\tilde S_n$ of order $n$ such that 
$\eta_n^p=w_1 w_2 \dots w_p$, with $\pi (w_i)=(m(i-1)+1~m(i-1)+2~\dots~mi)$ for all $1\leq i \leq p$ as in Corollary~\ref{corollary:the_n_cycle_in_S_n_odd}.

Let $\delta$ be an irreducible representation of $E_n=\langle\eta_n\rangle$.
Then, using Lemma~\ref{lemma:m_odd_p_nontrivial_atleast_two_distinct} and Frobenius reciprocity, we have linear characters $\delta_i$ of $\langle w_i \rangle$ for all $i\in \{1,\dots,p\}$ such that at least two of them are distinct and

\begin{displaymath}
	\Ind_{\langle \eta_n^p \rangle}^{E_n} \Res^{C_m^{\times p}}_{\langle \eta_n^p \rangle} (\delta_1 \otimes \delta_2 \otimes \cdots \otimes \delta_p) \geq \delta.
\end{displaymath}

Let $\Res^{\tilde S_n}_{\tilde S_{(m^p)}} V_\lambda \geq V \otimes V_1 \otimes V_2 \otimes \cdots \otimes V_p$ for some irreducible spin representation $V \otimes V_1 \otimes V_2 \otimes \cdots \otimes V_p$ of $\tilde S_{(m^p)}$.
Since $m$ is odd and $m\geq 7$, we have
$ \Res^{\tilde S_m}_{C_m} V_i \geq \delta_i $
for all $i\in [p]$ by induction.
It follows from Proposition~\ref{prop:restriction_to_C_n_odd} that
\begin{align}\label{equation:multiplicity_for_main_odd}
	\Res^{\tilde S_n}_{E_n} V_\lambda \geq \Ind_{\langle \eta_n^p \rangle}^{E_n} \Res^{C_m^{\times p}}_{\langle \eta_n^p \rangle} (\delta_1 \otimes \delta_2 \otimes \cdots \otimes \delta_p) \geq \delta.
\end{align}

By  Lemma~\ref{lemma:m_odd_p_nontrivial_atleast_two_distinct}, we can choose at least $np$ 
tuples $(\delta_i)_{i=1}^p$. Now Eq.~\eqref{equation:multiplicity_for_main_odd}
 implies that
\begin{displaymath}
	\Res^{\tilde S_n}_{E_n} V_\lambda \geq n \delta,
\end{displaymath}
for all $n > 27$ and for all linear characters $\delta$ of $E_n$.

\textbf{Case 2:} Let $n=2m$ be an even positive integer.
A direct check using Sage for $13\leq m \leq 18$ shows that
\begin{align}\label{equation:multiplicity_for_13_18_at_least_4}
	\Res^{\tilde S_m}_{E_m} V_\lambda \geq 4 \delta,
\end{align}
for all linear spin characters $\delta$ of $E_m$ and for all irreducible spin representations $V_\lambda$ of $\tilde S_m$.
We split this case into three subcases.

\textbf{Subcase 2.1:} Suppose that $m$ is odd. 
Let $\eta_n$ be an $n$-cycle in $\tilde S_n$ as in Subsection~\ref{subsubsection:even_odd_odd_case} such that
$\eta_n^2=- w_1 w_2$ or $w_1w_2$, with $\pi (w_1)=(1~2~\dots~m)$, $\pi(w_2)=(m+1~m+2~\dots~2m)$, and $w_1,w_2$ of order $m$.
Let $\delta$ be any linear spin representation of $E_n$.
Then, using Lemma~\ref{lemma:m_odd_p_nontrivial_atleast_two_distinct_even_odd_case}, there are at least $m-2$ pairs of linear spin characters $(\delta_1, \delta_2)$ of $\langle w_1 \rangle$ and $\langle w_2 \rangle$, respectively, such that $\delta_1 \neq \delta_2$ and
\begin{displaymath}
\Res^{E_n}_{\langle \eta_n^2 \rangle} \delta
=
\Res^{\langle w_1, w_2, -1 \rangle}_{ \langle \eta_n^2 \rangle}
(\delta_1 \otimes \delta_2\otimes (-\mathbb{1})).
\end{displaymath}

Let $\Res^{\tilde S_n}_{\tilde S_{(m^ 2)}} V_\lambda \geq V \otimes V_1 \otimes V_2$, where $V \otimes V_1 \otimes V_2$ is an irreducible spin representation of $\tilde S_{(m^2)}$.
Since $m> 13$, by the induction hypothesis, Eq.~\eqref{equation:multiplicity_for_13_18_at_least_4}, and Lemma~\ref{lemma:spectra_of_product_of_two_modules}, we have 
\begin{displaymath}
	\Res^{\tilde S_m}_{C_m} V_i \geq  4 \delta_i
\end{displaymath}
for all $i=1,2$.
Therefore, by Proposition~\ref{prop:restriction_to_C_n_even_odd_odd_case} and Lemma~\ref{lemma:m_odd_p_nontrivial_atleast_two_distinct_even_odd_case}, we have
\begin{displaymath}
	\Res^{\tilde S_n}_{E_n} V_\lambda \geq \tfrac{16(m-2)}{2} \delta \geq n \delta.
\end{displaymath}
This completes the proof for Subcase 2.1.

Now we shall consider the case where $m$ is even.
Recall that $\eta_n$ is an $n$-cycle in $\tilde S_n$ such that $\eta_n^2=w_1 w_2$, with $\pi (w_1)=(1~2~\dots~m)$ and $\pi(w_2)=(m+1~m+2~\dots~2m)$ as in Subsections~\ref{case:even_even_order=m} and \ref{Subcase:even_even_order_2m_case}.

\textbf{Subcase 2.2:} Suppose that $w_1$ and $w_2$ are of order $m$ and $m$ is even.

Let $\delta$ be an irreducible spin representation of $E_n$.
Using Lemma~\ref{lemma:existence_of_characters_even_even_case}, there exist at least $m-8$ linear spin characters $\delta_1$ and $  {\delta_2}$ of $\langle w_1 \rangle$ and $\langle w_2^2 \rangle$, respectively, such that 
\begin{align}\label{eq:existence_of_characters_even_even_case}
	\Ind_{\langle \eta_n^4 \rangle}^{E_n} \Res^{\langle w_1, w_2^2, -1 \rangle}_{ \langle \eta_n^4 \rangle} (\delta_1 \otimes   {\delta_2} \otimes (-\mathbb{1})) \geq \delta.
\end{align}

Suppose that $\Res^{\tilde S_n}_{\tilde S_{(m^{ 2})}} V_\lambda \geq V \otimes V_1 \otimes V_2$ for some irreducible spin representation $V \otimes V_1 \otimes V_2$ of $\tilde S_{(m^{ 2})}$.
By the induction hypothesis and Lemma~\ref{lemma:spectra_of_product_of_two_modules}, we have
\begin{displaymath}
\Res^{\tilde S_{(m^2)}}_{\langle w_1,w_2^2, -1 \rangle} V \otimes V_1 \otimes V_2 \geq \gamma_1 \otimes \gamma_2 \otimes (-\mathbb{1})
\end{displaymath}
for all linear representations $\gamma_1$ and $\gamma_2$ of $\langle w_1 \rangle$ and $\langle w_2^2 \rangle$, respectively.

Therefore,
\begin{displaymath}
	\Res^{\tilde S_{(m^2)}}_{\langle w_1,w_2^2,-1 \rangle} V \otimes V_1 \otimes V_2 \geq \psi,
\end{displaymath}
for all irreducible spin representations $\psi:=\delta_1 \otimes   {\delta_2} \otimes (-\mathbb{1})$ of $H:=\langle w_1,w_2^2,-1 \rangle$.

Using Lemma~\ref{lemma:Wreath_product_of_cyclic_groups_even_even_order=m_case} and Frobenius reciprocity, we have
\begin{displaymath}
	\Res^{\tilde S_{(m^{2})}}_{\widetilde{C_m\times C_m}} V \otimes V_1 \otimes V_2 \geq \tilde\psi
\end{displaymath}
for all irreducible spin representations $\tilde\psi:=\Ind_{H}^{\widetilde{C_m\times C_m}} \psi$ of $\widetilde{C_m\times C_m}$.
Using Lemmas~\ref{lemma:Wreath_product_of_cyclic_groups_even_even_order=m_case_induced} and \ref{lemma:existence_of_characters_even_even_case}, we find that $\Ind_{ \widetilde{C_m\times C_m}}^{\widetilde{C_m \wr C_2}} \tilde\psi$ is irreducible.

Thus, by Frobenius reciprocity, we have
\begin{displaymath}
	\Res^{\tilde S_n}_{\widetilde{C_m \wr C_2}} V \otimes V_1 \otimes V_2 \geq \Ind^{\widetilde{C_m \wr C_2}}_{\langle w_1,w_2^2,-1 \rangle} \delta_1 \otimes   {\delta_2} \otimes (-\mathbb{1}).
\end{displaymath}

Restricting further to $E_n$ and applying Mackey's restriction formula, we obtain
\begin{align*}
	\Res^{\tilde S_n}_{E_n} V \otimes V_1 \otimes V_2 
	&\geq \Res^{\widetilde{C_m \wr C_2}}_{E_n} \Ind^{\widetilde{C_m \wr C_2}}_{\langle w_1,w_2^2,-1 \rangle} \delta_1 \otimes   {\delta_2} \otimes (-\mathbb{1})\\
	&\geq \Ind_{\langle \eta_n^4 \rangle}^{E_n} \Res^{\langle w_1,w_2^2,-1 \rangle}_{ \langle \eta_n^4 \rangle} \delta_1 \otimes   {\delta_2} \otimes (-\mathbb{1}).
\end{align*}

Using Eq.~\eqref{eq:existence_of_characters_even_even_case} and the inequality above, we obtain
\begin{displaymath}
	\Res^{\tilde S_n}_{E_n} V_\lambda \geq \delta.
\end{displaymath}
By the induction hypothesis and Eq.~\eqref{equation:multiplicity_for_13_18_at_least_4}, we have $\Res^{\tilde S_m}_{\langle w_1 \rangle} V_1 \geq 4 \delta_1$ and $\Res^{\tilde S_m}_{\langle w_2^2 \rangle} V_2 \geq 8   {\delta_2}$.
Since there are at least $m-8$ pairs of linear spin characters $(\delta_1,   {\delta_2})$ such that
\begin{displaymath}
\Ind_{\langle \eta_n^4 \rangle}^{E_n} \Res^{\langle w_1, w_2^2, -1 \rangle}_{ \langle \eta_n^4 \rangle} (\delta_1 \otimes   {\delta_2} \otimes (-\mathbb{1})) \geq \delta,
\end{displaymath}
we have
\begin{displaymath}
	\Res^{\tilde S_n}_{E_n} V_\lambda \geq \tfrac{4 \times 8 \times (m-8)}{4} \delta \geq n \delta.
\end{displaymath}
This completes the proof for Subcase 2.2.

\textbf{Subcase 2.3:} Suppose that $w_1$ and $w_2$ are of order $2m$.
Let $\delta$ be an irreducible spin representation of $E_n$.
Using Lemma~\ref{lemma:existence_of_characters_even_even_2m_case}, there exist $m-8$ pairs of linear spin characters $\delta_1$ and $  {\delta_2}$ of $\langle w_1 \rangle$ and $\langle w_1^2 w_2^2 \rangle$, respectively, such that
\begin{align}\label{eq:existence_of_characters_even_even_2m_case}
	\Ind_{\langle \eta_n^4 \rangle}^{E_n} \Res^{\langle w_1, w_2^2 \rangle}_{ \langle \eta_n^4 \rangle} \delta_1 \otimes   {\delta_2} \geq \delta
\end{align}
and $\delta_1(w_1^2) \neq \pm \delta_1(w_1^{-2})  {\delta_2}(w_1^2 w_2^2)$.

Let $\Res^{\tilde S_n}_{\tilde S_{(m^2)}} V_\lambda \geq V \otimes V_1 \otimes V_2$ for some irreducible spin representation $V \otimes V_1 \otimes V_2$ of $\tilde S_{(m^2)}$.
As in the previous case, by the induction hypothesis and Eq.~\eqref{equation:multiplicity_for_13_18_at_least_4}, we have $\Res^{\tilde S_m}_{\langle w_1 \rangle} V_1 \geq 4 \delta_1$ and $\Res_{\langle w_1^2 w_2^2 \rangle}^{\tilde S_m} V_2 \geq 8   {\delta_2}$, for all linear spin representations $\delta_1$ and $  {\delta_2}$ of $\langle w_1 \rangle$ and $\langle w_2^2 \rangle$, respectively.

Now, using the same argument as in Subcase 2.2, we have
\begin{displaymath}
	\Res^{\tilde S_{(m^ 2)}}_{\langle w_1,w_2^2 \rangle} V \otimes V_1 \otimes V_2 \geq \psi,
\end{displaymath}
for all irreducible representations $\psi:=\delta_1 \otimes   {\delta_2}$ of $H:=\langle w_1,w_1^2 w_2^2 \rangle$ with $\delta_1(-1)=-1$.
Using Lemma~\ref{lemma:Wreath_product_of_cyclic_groups_even_even_order=2m_case} and Frobenius reciprocity, we have
\begin{displaymath}
	\Res^{\tilde S_{(m^{ 2})}}_{\widetilde{C_m\times C_m}} V \otimes V_1 \otimes V_2 \geq \tilde\psi,
\end{displaymath}
for all irreducible spin representations $\tilde\psi:=\Ind_{H}^{\widetilde{C_m\times C_m}} \psi$ of $\widetilde{C_m\times C_m}$.
Using Lemmas~\ref{lemma:Wreath_product_of_cyclic_groups_even_even_order=2m_case_induced} and \ref{lemma:existence_of_characters_even_even_2m_case}, we find that $\Ind_{ \widetilde{C_m\times C_m}}^{\widetilde{C_m \wr C_2}} \tilde\psi$ is irreducible.
Proceeding as in Subcase 2.2, we have


\begin{align*}
	\Res^{\tilde S_n}_{E_n} V \otimes V_1 \otimes V_2 
	&\geq \Ind_{\langle \eta_n^4 \rangle}^{E_n} \Res^{\langle w_1,w_2^2 \rangle}_{ \langle \eta_n^4 \rangle} \delta_1 \otimes   {\delta_2}.
\end{align*}
Using Eq.~\eqref{eq:existence_of_characters_even_even_2m_case} and the inequality above, we have
\begin{displaymath}
	\Res^{\tilde S_n}_{E_n} V_\lambda \geq \delta.
\end{displaymath}
Using the induction hypothesis and the fact that there are at least $m-8$ pairs of linear spin characters $(\delta_1,   {\delta_2})$ such that
\begin{displaymath}
\Ind_{\langle \eta_n^4 \rangle}^{E_n} \Res^{\langle w_1, w_2^2 \rangle}_{ \langle \eta_n^4 \rangle} \delta_1 \otimes   {\delta_2} \geq \delta,
\end{displaymath}
we have
\begin{displaymath}
	\Res^{\tilde S_n}_{E_n} V_\lambda \geq \tfrac{4 \times 8 \times (m-8)}{4} \delta \geq n \delta.
\end{displaymath}

This completes the proof for Subcase 2.3, and hence the theorem. \qed

Now we state some small corollaries that will be helpful in the upcoming proofs.
\begin{corollary}\label{corollary:lower_bound_non_basic}
	For all $n\geq 11$ and for all irreducible spin representations $(\rho,V)$ of $\tilde S_n$ other than the basic spin representations, the minimal polynomial of any $n$-cycle in $V$ has $n$ distinct roots, each appearing as an eigenvalue of $\rho(\eta_n)$ with multiplicity at least $n$.
\end{corollary}
\begin{proof}
	Let $\lambda\in\DP_n \setminus \{(n)\}$.
	Then, by Theorem~\ref{theorem:cycle_double_S_n}, we have $\Res^{\tilde S_n}_{E_n} V_\lambda \geq n \delta$ for all linear spin characters $\delta$ of $E_n$ when $n\geq 19$.
    Since any $n$-cycle $g$ is either conjugate to $\eta_n$ or $-\eta_n$, it follows that $\rho_\lambda(g)$ has $n$ distinct eigenvalues for all $n\geq 19$.
	Using direct computations in Sage for $11\leq n \leq 18$, the result follows.
\end{proof}
The following corollary is an easy consequence of Theorem~\ref{theorem:cycle_double_S_n}.
\begin{corollary}\label{corollary:even_n_cycle_completeness}
	Let $g$ be an element of $\tilde S_n$ with cycle type $(n)$ and $n$ even.
	If $V$ is an irreducible spin representation of $\tilde S_n$, then
    \begin{displaymath}
    \{\pm 1\} \times \Sp_V(g)= \{\pm i\} \times \Sp_V(g) =\Sp_V(g) \cup \Sp_{V'}(g)= \Omega(n,\varepsilon_g).
    \end{displaymath}
    \end{corollary}
\begin{lemma}
	\label{lemma:lower_bound_for_(p,q)_(p,q)}
	Let $p\geq 12$ and $q < p$ be two even positive integers.
	Then $\sigma_{(p,q)}$ has $\lcm(p,q)$ distinct eigenvalues in $V_{(p,q)}$ and each of them appears with multiplicity at least $p$.
\end{lemma}
\begin{proof}
	Using the shifted Littlewood-Richardson rule (Theorem~\ref{theorem:Littlewood-Richardson-rule}), we have 
	$
		\Res^{\tilde S_{p+q}}_{\tilde S_p \times \tilde S_q} V_{(p,q)} \geq V\otimes V_{(p-1,1)} \otimes V_{(q)}^\pm.
	$
	Using Corollary~\ref{corollary:lower_bound_non_basic}, we have that $\sigma_{(p)}$ has $p$ distinct eigenvalues in $V_{(p-1,1)}$ with each having multiplicity at least $p$.
	Lemma~\ref{lemma:spectra_of_product_of_two_modules} implies that $\sigma_{(p,q)}$ has spectrum 
    \begin{displaymath}
    \{\pm i\}\times\Sp_{V_{(p-1,1)}}(\sigma_{(p)})\times\Sp_{V_{(q)}^\pm}(\sigma_{(q)}) = \{\pm i\}\times\Sp_{V_{(p-1,1)}}(\sigma_{(p)})\times\{\zeta \mid \zeta^q=\varepsilon_{\sigma_{(q)}}\},
    \end{displaymath}
    which has $\lcm(p,q)$ distinct elements by Proposition~\ref{proposition:product_of_two_sets_cardiality_in_complex_numbers} and each of them appears with multiplicity at least $p$.
\end{proof}

\section{Proof of Theorem~\ref{theorem:main}}
\label{section:minimal_polynomial_some_family}

In this section, we prove Theorem~\ref{theorem:main}.
Let $(\rho,V)$ be an irreducible spin representation of $\tilde S_n$ indexed by a strict partition $\lambda$.
The proof is divided into two parts: first, we consider the case $\lambda=(n)$, that is, the basic spin representation; then we treat the remaining cases.

For $g\in \tilde S_n$ with cycle type $\alpha$, define
$
\varepsilon_g:=
\begin{cases}
	1 & \text{if } g^{\lcm(\alpha)}=1,\\
	-1 & \text{if } g^{\lcm(\alpha)}=-1.
\end{cases}
$

Recall that for $\varepsilon\in\{\pm1\}$, $\Omega(r,\varepsilon)$ denotes the set of complex roots of the equation $x^r-\varepsilon=0$.

\subsection{The case $\lambda=(n)$}

In this subsection, we describe the minimal polynomials of elements of $\tilde S_n$ in the basic spin representations.
\begin{lemma}\label{lemma:basic_spin_representation_eigenvalues}
Theorem~\ref{theorem:main} holds when $\lambda=  (n)$.
\end{lemma}

\begin{proof}
If $g$ is a preimage of an $n$-cycle, then the assertion follows from Theorem~\ref{theorem:cycle_double_S_n}.
Therefore, we may assume that $\ell(\alpha)\geq 2$.
If $n\leq 18$, then the minimal polynomial of $\rho(g)$ can be found using Sage, so we may assume that $n\geq 19$.
We now prove the statement by induction on $n$.

Suppose that $g$ belongs to the Young subgroup $\tilde S_{n_1}\times_c\tilde S_{n_2}$ with $n_1+n_2=n$, and that $g=g_1g_2$, where $g_1\in\tilde S_{n_1}$ and $g_2\in\tilde S_{n_2}$.
It follows from Lemma~\ref{lemma:shifted_Littlewood_Richardson_coefficients} that if $\mu\in\DP_{n_1}$ and $\nu\in\DP_{n_2}$ satisfy $f_{\mu\nu}^\lambda>0$, then $\mu=(n_1)$ and $\nu=(n_2)$ (see Fig.~\ref{f:shifted-tableaux-for-basic-spin}).
Denote by $\rho_{(n_1)}$ and $\rho_{(n_2)}$ the representations of $\tilde S_{n_1}$ and $\tilde S_{n_2}$ affording characters $\phi_{(n_1)}$ and $\phi_{(n_2)}$, respectively.

\begin{figure}[!h]
\ytableausetup{boxsize=1.3em}
\begin{center}
\begin{ytableau}
*(blue) & \ldots & *(blue) & 1 & \ldots & 1 \\
\end{ytableau}
\end{center}
\caption{Shifted tableaux for $\lambda=(n)$}
\label{f:shifted-tableaux-for-basic-spin}
\end{figure}

Since $\lambda$ is not the union of $(n_1)$ and $(n_2)$, Theorem~\ref{theorem:Littlewood-Richardson-rule} implies that the set of eigenvalues of $\rho(g)$ coincides with that of $\Sp_{V_{(n_1)} \times_c V_{(n_2)}}(g)$, or with the union of the corresponding sets for $\Sp_{({V_{(n_1)} \times_c V_{(n_2)}})^\pm}(g)$ if $n_1$ and $n_2$ have different parity.
We now consider several cases depending on $\alpha$.

{\bf Case 1.} Suppose that $3\in\alpha$, but $5$ is not the unique part of $\alpha$ divisible by $5$.
Then we may assume that $n_1=3$ and take $g_1=-\sigma_{(3)}$.
Recall that $g_1^3=1$ by Theorem~\ref{c:value_of_powers}.
This implies that $\varepsilon_g=\varepsilon_{g_2}$.
Let $\beta\vdash n_2$ be the cycle type of $g_2$, and let $t=\lcm(\beta)$.
By induction, either $\Sp_{(n_2)}(g_2)=\Omega(t,\varepsilon_g)$, or $3$ is the unique part of $\beta$ divisible by $3$ and $\Sp_{(n_2)}(g_2)=\Omega(t/3,\varepsilon_g)\times (\Omega(3,1) \setminus \{1\})$.
Note that $V_{(3)}$ is self-associate and $g_1$ is even.
Moreover, if $n_2$ is even, then
$
\Res^{\tilde S_n}_{\tilde S_{(3)}\times_c\tilde S_{(n_2)}}V_\lambda \geq V_{(3)}\otimes_c V_{(n_2)},\, V_{(3)}\otimes_c V_{(n_2)}'.
$
By Lemma~\ref{lemma:spectra_of_product_of_two_modules}, we obtain
$
\Sp_{(n)}(g)=\Sp_{(3)}(g_1)\times\Sp_{(n_2)}(g_2),
$
or
$
\Sp_{(n)}(g)=\{\pm1\}\times\Sp_{(3)}(g_1)\times\Sp_{(n_2)}(g_2)
$
if $g_2$ is odd and $n_2$ is even.
Moreover, by Table~\ref{table:exceptional_cases_1_8}, we have
$
\Sp_{(3)}(g_1)=\{\eta\in\mathbb{C}\mid \eta^3= 1,\ \eta\neq1\}.
$

First, consider the case $\Sp_{(n_2)}(g_2)=\Omega(t,\varepsilon_{g_2})$.
If $g_2$ is odd, then $t$ is even, so $\{\pm1\}\times \Omega(t,\varepsilon_g)=\Omega(t,\varepsilon_g)$.
Therefore,
$
\Sp_{(n)}(g)=\Sp_{(3)}(g_1)\times \Omega(t,\varepsilon_g),
$
regardless of the parity of $g_2$.
This implies that the minimal polynomial of $\rho(g)$ equals $x^t-\varepsilon_g$ if $3\mid t$, and $\frac{x^{3t}-\varepsilon_g}{x^t-\varepsilon_g}$ if $(3,t)=1$, as required.

It remains to consider the case when $3$ is the unique part of $\beta$ divisible by $3$.
In this case, write $g = h_1 h_2$, with $h_1$ having cycle type $(3,3)$ and $h_2$ having cycle type $\beta$ which obtained from $\alpha$ by removing all $3$'s from $\alpha$.
Using induction and the fact that $|\beta| \geq 9$, we see that $h_1$ (resp. $h_2$) has $3$ (resp. $\lcm(\beta)$) distinct eigenvalues in $V_{(9)}$ (resp. $V_{(n-9)}$). Now the result follows from Corollary~\ref{corollary:product_of_two}.

{\bf Case 2.} Suppose that $5\in\alpha$, but $3$ is not the unique part divisible by $5$. Then we can argue exactly as in Case 1 by replacing $3$ with $5$.

{\bf Case 3.} Suppose that $3,5\in\alpha$, and that all other parts of $\alpha$ are coprime to $15$.
Then we may assume that $n_1=3$ and that $g_1=-\sigma_{(3)}$.
Since $g_1^3=1$, we get $\varepsilon_g=\varepsilon_{g_2}$.
Denote by $\beta$ the partition of $n_2$ corresponding to the cycle type of $g_2$, and let $t=\lcm(\beta)$.
By assumption, $5$ is the unique part of $\beta$ divisible by $5$.

By induction, we know that $\Sp_{(n_2)}(g_2) = \Omega(t/5,\varepsilon_g) \times (\Omega(5,1)\setminus \{1\})$.
By Lemma~\ref{lemma:spectra_of_product_of_two_modules}, we find that $\Sp_{(n)}(g)=\Sp_{(3)}(g_1)\times\Sp_{(n_2)}(g_2),$ or $\Sp_{(n)}(g)=\{\pm1\}\times\Sp_{(3)}(g_1)\times\Sp_{(n_2)}(g_2)$ if $g_2$ is odd and $n_2$ is even.
If $g_2$ is odd, then $t$ is even, so $\{\pm1\}\times\bigl(\Omega(t/5,\varepsilon_g) \times (\Omega(5,1)\setminus \{1\}))\bigr)=\Omega(t/5,\varepsilon_g) \times (\Omega(5,1)\setminus \{1\}).$
Therefore, $\Sp_{(n)}(g)=\Sp_{(3)}(g_1)\times\bigl(\Omega(t/5,\varepsilon_g) \times (\Omega(5,1)\setminus \{1\}))\bigr)$
regardless of the parity of $g_2$.
Since $\Sp_{(3)}(g_1)=\Omega(3,1)\setminus\{1\}$, it follows that $\Sp_{(n)}(g)=\frac{(x^{k}-\varepsilon_g)(x^{k/15}-\varepsilon_g)}{(x^{k/3}-\varepsilon_g)(x^{k/5}-\varepsilon_g)}$.

{\bf Case 4.} Finally, suppose that $3,5\notin\alpha$.
We start with the case that $\alpha $ has an odd part $n_1$.
Write $g = g_1 g_2$ with the cycle type of $g_1$, and $g_2$ are $(n_1)$ and $\beta$, respectively, such that $\alpha = (n_1) \cup \beta$ and the order of $g_1$ is $n_1$.
By induction, either $\Sp_{V_{(n-n_1)}}(g_2)$ has $\lcm(\beta)$ many distinct eigenvalues or $\beta = (n_2)$ and $n_2 \in \{2,4,6,8\}$.
By induction, $\Sp_{V_{(n_1)}}(g_1)$ has $n_1$ distinct elements.
So the result follows in the former case by Corollary~\ref{corollary:product_of_two}.
In the latter case, we see that $\Sp_{(n)}(g) = \{\pm 1\} \times \Sp_{{(n_1)}}(g_1) \times \Sp_{{(n_2)}}(g_2)$.
Using Corollary~\ref{corollary:even_n_cycle_completeness}, $\{\pm 1\} \times   \Sp_{(n_2)}(g_2) = \Omega(n_2,\varepsilon_g)$.
Therefore, we get that $\Sp_{(n)}(g) = \Omega(n_1,1) \times \Omega(n_2,\varepsilon_g)$.
Now the result follows from Proposition~\ref{proposition:product_of_two_sets_cardiality_in_complex_numbers}.

Therefore, we may assume that all parts of $\alpha$ are even.
Let $n_1\in \alpha$ be an even part.
Denote by $\beta$ the partition of $n_2$ corresponding to the cycle type of $g_2$, and let $t=\lcm(\beta)$.
Then $n_1$ and $n_2$ are even, while $g_1$ is odd.
By Lemma~\ref{lemma:spectra_of_product_of_two_modules}, we find that $\Sp_{(n)}(g)=\{\pm i\}\times\Sp_{(n_1)}(g_1)\times\Sp_{(n_2)}(g_2)$
if $g_2$ is odd, and $\Sp_{(n)}(g)=\{\pm1\}\times\Sp_{(n_1)}(g_1)\times\Sp_{(n_2)}(g_2)$ if $g_2$ is even.
Using Corollary~\ref{corollary:even_n_cycle_completeness} and induction, we obtain that $\Sp_{(n)}(g) = A \times \Omega(n_1,\pm1) \times \Omega(t,\pm 1)$, where $A = \{\pm 1\} \text{ or } \{\pm i\}$ and both $\pm1$ depends on $\varepsilon_g,\varepsilon_{g_1}$ and $\varepsilon_{g_2}$.
In any case, using Corollary~\ref{corollary:product_of_two}, it follows that $\Sp_{(n)}(g)$ has $\lcm(\alpha)$ many distinct elements.
This completes the proof.
\end{proof}

\subsection{The general case}

In this subsection, we finish the proof of Theorem~\ref{theorem:main}. 
We assume that $\rho$ (or $\rho^\pm$) is 
an irreducible spin representation of $\tilde S_n$ 
corresponding to $\lambda\in\DP_n$.
Suppose that $g\in \tilde S_n$ has cycle type $\alpha$. 
By Theorem~\ref{theorem:cycle_double_S_n} and Lemma~\ref{lemma:basic_spin_representation_eigenvalues},
we can assume that $\alpha\neq(n)$ and $\lambda\neq(n)$.
Using Sage, we verify that Theorem~\ref{theorem:main}
is true for all $n\leq 13$, so we assume that $n\geq14$.
We split the proof into several cases for $\alpha$ that are motivated by exceptions in Theorem~\ref{theorem:main} (compare with Lemma~\ref{lemma:basic_spin_representation_eigenvalues}):

\begin{itemize}
\item $3\in\alpha$ but it is not true that $5$ is the unique part of $\alpha$ divisible by 5;
\item $5\in\alpha$ but it is not true that $3$ is the unique part of $\alpha$ divisible by 3;
\item $\alpha$ contains $3$ and $5$, while other parts of $\alpha$ are not divisible by $3$ and $5$;
\item $\alpha$ contains neither 3 nor 5.
\end{itemize}

In fact, first two cases are similar, and we consider them together.
On the other hand, the third case differs for $\lambda=(n-1,1)$ and
$\lambda\neq(n-1,1)$ in the statement of Theorem~\ref{theorem:cycle_double_S_n}, so we consider these two subcases separately. Thus, the proof of the theorem is divided into the following four lemmas.

\begin{lemma}\label{lemma:general-case-part1}
Suppose that $\rho$ is 
an irreducible spin representation of $\tilde S_n$ 
corresponding to $\lambda\in\DP_n$,
where $n\geq14$ and $\lambda\neq(n)$. 
Let $g\in \tilde S_n$ have cycle type $\alpha$ and $k=\lcm(\alpha)$.
If one of the following statements holds
\begin{itemize}
\item $3\in\alpha$ but it is not true that $5$ is the unique part of $\alpha$ divisible by $5$;
\item $5\in\alpha$ but it is not true that $3$ is the unique part of $\alpha$ divisible by $3$, 
\end{itemize}
then the minimal polynomial of $\rho(g)$ equals 
$x^{k}-\varepsilon_g$.
\end{lemma}
\begin{proof}
Suppose that $3\in\alpha$ but it is not true that $5$ is the unique part of $\alpha$ divisible by 5. Consider $g$ as an element of the Young subgroup $S_3\times_c S_{n-3}$. Write $g=g_1g_2$,
$g_1=-\sigma_{(3)}$ is a preimage of $(1,2,3)$ in $\tilde S_n$
and $g_2\in\tilde S_{n-3}$ is the corresponding element in the second factor. Note that $g_1^3=1$, so $\varepsilon_g=\varepsilon_{g_2}$.
Denote by $\beta$ the partition of $n-3$ corresponding to the cycle type of $g_2$. Let $t=\lcm(\beta)$ and $\omega=e^{2\pi i/3}$ be a primitive cube root of unity.
According to Table~\ref{table:exceptional_cases_1_8},
we see that $\Sp_{(3)}(g_1)=\{\omega,\overline{\omega}\}$
and $\Sp_{(2,1)^\pm}(g_1)=\{1\}$. 
By Lemma~\ref{lemma:most_dominant_alpha_beta},
there exist strict partitions $\mu,\nu\in\DP_{n-3}$ such that
$f^\lambda_{(3)\mu}, f^\lambda_{(2,1)\nu}>0$. 
It follows from Theorem~\ref{theorem:Littlewood-Richardson-rule} that
$V_{(3)}\otimes_c V_\mu,V_{(2,1)}^\epsilon\otimes_c
V_{\nu}\leq\Res^{\tilde S_n}_{\tilde S_{3} \times_c \tilde S_{n-3}}V_\lambda$, where $\epsilon\in\{+,-\}$.
Denote these reduced Clifford products by 
$U_1$ and $U_2$, respectively. 

Assume that it is not true that $3$ is the unique part of $\beta$
dividing by 3.
Using induction and $n-3>8$, $\Sp_\mu(g_2)=\Sp_\nu(g_2)=\Omega(t,\varepsilon_g)$.
It follows from Lemma~\ref{lemma:spectra_of_product_of_two_modules}
that $\Sp_{U_1}(g) = \{\omega,\overline{\omega}\}\times \Omega(t,\varepsilon_g)$ 
regardless of the parity of $g_2$.
Similarly, since $\Sp_{(2,1)}(g_1)=\{1\}$, we see that $\Sp_{U_2}(g) = \{1\} \times \Omega(t,\varepsilon_{g})$.
Therefore, $\Sp_{\lambda}(g) \supseteq \Omega(3,1) \times \Omega(t,\varepsilon_g)$, which has $\lcm(\alpha)$ many distinct elements by Proposition~\ref{proposition:product_of_two_sets_cardiality_in_complex_numbers}. So the result follows in this case.


Therefore, we may assume that $3\in\beta$ and other parts of $\beta$
are not divisible by 3. Then $\lcm(\alpha)=t$.
By induction and $n-3>8$, $ \{w,\bar{w}\} \times \Omega(t/3,\epsilon_g) $ contained in both  $\Sp_\mu(g_2)$ and $\Sp_\nu(g_2)$.
Arguing as in the previous paragraph, we obtain that $\Sp_\lambda(g) \supset \Omega(3,1) \times \{w,\bar{w}\} \times \Omega(t/3,\epsilon_g) $.
Since $\Omega(3,1)  \times \{w,\bar{w}\} \times \Omega(t/3,\epsilon_g)= \Omega(3,1)  \times \Omega(t/3,\epsilon_g) $, which by Proposition~\ref{proposition:product_of_two_sets_cardiality_in_complex_numbers}, has $t$ ($=\lcm(\alpha)$) many distinct elements and we are done.

Suppose  that $5\in\alpha$ but it is not true that $3$ is the unique part divisible by 3. We can write $g=g_1g_2$, where $g_1=-\sigma_{(5)}$
is a preimage of the cycle $(1,2,3,4,5)$ and $g_2\in\tilde{S}_{n-5}$. By Theorem~\ref{c:value_of_powers}, we see that $g_1^5=1$, so $\varepsilon_g=\varepsilon_{g_2}$.
Denote by $\beta$ the partition of $n-5$ corresponding to the cycle type of $g_2$. Let $t=\lcm(\beta)$. By Lemma~\ref{lemma:most_dominant_alpha_beta} and Theorem~\ref{theorem:Littlewood-Richardson-rule},
there exist $\epsilon\in\{\pm\}$ and 
$\mu\in\DP_{n-5}$ such that $V_{(4,1)}^\epsilon\otimes_c V_\mu\leq
\Res^{\tilde S_n}_{\tilde S_{5} \times_c \tilde S_{n-5}}V_\lambda$.
By induction, $\Sp_{(4,1)}(g_1)$ and $\Sp_\mu(g_2)$ has $5$ and $\lcm(\mu)$ many distinct elements. It follows from Corollary~\ref{corollary:product_of_two} that $\Sp_\lambda(g)$ has $\lcm(\sigma)$ many distinct elements.
This completes the proof.
\end{proof}

\begin{lemma}
Suppose that $\rho$ is 
an irreducible spin representation of $\tilde S_n$ 
corresponding to $\lambda=(n-1,1)$,
where $n\geq14$. 
Let $g\in \tilde S_n$ have cycle type $\alpha$ and $k=\lcm(\alpha)$.
If $\alpha$ contains $3$ and $5$, while other parts of $\alpha$ are not divisible by $3$ and $5$, then the minimal polynomial of $\rho(g)$ equals $\frac{(x^{k}-\varepsilon_g)}{(x^{k/15}-\varepsilon_g)}$.
\end{lemma}
\begin{proof}
We prove the statement using induction on $n$.
Consider $g$ as an element of the Young subgroup $S_3\times_c S_{n-3}$. Write $g=g_1g_2$,
$g_1=-\sigma_{(3)}$ is a preimage of $(1,2,3)$ in $\tilde S_n$
and $g_2\in\tilde S_{n-3}$ is the corresponding element in the second factor. By Theorem~\ref{c:value_of_powers}, we see that $g_1^3=1$, so $\varepsilon_g=\varepsilon_{g_2}$. 
Denote by $\beta$ the partition of $n-3$ corresponding to the cycle type of $g_2$. Let $t=\lcm(\beta)$. By assumption, we can write $\lcm(\beta)=5l$, where $l$ is an integer such that $(l,15)=1$. 

It is easy to see that there are only four shifted tableaux of shape  $\lambda/\mu$ with content $\nu$ such that $f^\lambda_{\mu\nu}>0$:
two for $(\mu,\nu)=((3),(n-3))$,
one for $(\mu,\nu)=((3),(n-4,1))$, and 
one for $(\mu,\nu)=((2,1),(n-3))$ (see, Fig.\ref{f:shifted-tableaux-for-basic-standard-spin}).

\begin{figure}[!h]
\ytableausetup{boxsize=1.3em}

\begin{ytableau}
*(blue) & *(blue) & *(blue) & 1 & \ldots & 1 \\
\none & 1
\end{ytableau}
\quad
\begin{ytableau}
*(blue) & *(blue) & *(blue) & 1' & \ldots & 1 \\
\none & 1
\end{ytableau}
\quad
\begin{ytableau}
*(blue) & *(blue) & *(blue)  & 1 & \ldots & 1 \\
\none & 2
\end{ytableau}
\quad
\begin{ytableau}
*(blue) & *(blue) & 1 & \ldots & 1 \\
\none & *(blue)
\end{ytableau}
\caption{Shifted tableaux for $\lambda=(n-1,1)$}
\label{f:shifted-tableaux-for-basic-standard-spin}
\end{figure}

By Theorem~\ref{theorem:Littlewood-Richardson-rule},
we infer that each irreducible constituent of 
$\Res^{\tilde S_n}_{\tilde S_3 \times_c \tilde S_{n-3}} V_{(n-1,1)}$ is equivalent to one of the following (up to associate):
$V_{(3)}\otimes_c V_{(n-3)}$,
$V_{(2,1)}\otimes_c V_{(n-3)}$, or
$V_{(3)}\otimes_c V_{(n-4,1)}$, moreover each of this reduced Clifford product (and their associates) appears in the restriction since $\lambda\neq\mu\cup\nu$ in all these cases. This implies that 
to find the set of eigenvalues of $\rho(g)$,
we need to find eigenvalues of $g$ 
in each of these submodules.

First, consider the module $V_{(3)} \otimes_c V_{(n-3)}$.
By induction, $\Sp_{(n-3)}(g_2) = (\Omega(5,1)\setminus \{1\}) \times  \Omega(l,\epsilon_g)$.
Since $\Sp_{(3)}(g_1) = \Omega(3,1) \setminus \{1\}$, using Lemma~\ref{lemma:spectra_of_product_of_two_modules}, we obtain that 
$\Sp_{V_{(3)} \otimes_c V_{(n-3)}} (g) = (\Omega(3,1) \setminus \{1\}) \times (\Omega(5,1) \setminus \{1\}) \times \Omega(l,\epsilon_g)$ regardless of the parity of $g_2$.

Second, consider the module $V_{(3)} \otimes_c V_{(n-4,1)}$.
By induction, $\Sp_{(n-4,1)}(g_2) = \Omega(5,1) \times \Omega(l,\epsilon_g)$.
Since $\Sp_{(3)}(g_1) = \Omega(3,1) \setminus \{1\}$, using Lemma~\ref{lemma:spectra_of_product_of_two_modules}, we obtain that 
$\Sp_{V_{(3)} \otimes_c V_{(n-3)}} (g) = (\Omega(3,1) \setminus \{1\}) \times \Omega(5,1) \times \Omega(l,\epsilon_g)$ regardless of the parity of $g_2$.

Finally, we consider the module $V_{(2,1)} \otimes_c V_{(n-3)}$.
By induction, $\Sp_{(n-3)}(g_2) = (\Omega(5,1) \setminus \{1\}) \times \Omega(l,\epsilon_g)$.
Since $\Sp_{(2,1)}(g_1) = \{1\}$, using Lemma~\ref{lemma:spectra_of_product_of_two_modules}, we obtain that 
$\Sp_{V_{(2,1)} \otimes_c V_{(n-3)}} (g) =  \{1\} \times (\Omega(5,1) \setminus \{1\}) \times \Omega(l,\epsilon_g)$ regardless of the parity of $g_2$.
Hence, we obtain that 
\begin{align*}
    \Sp_{(n-1,1)}(g) = \big((\Omega(3,1) \setminus \{1\}) \times \Omega(5,1) \times \Omega(l,\epsilon_g) \big) \cup 
    \big( \{1\} \times (\Omega(5,1) \setminus \{1\}) \times \Omega(l,\epsilon_g) \big).
\end{align*}
Therefore, the minimal polynomial of $g$ in $V_{(n-1,1)}$ is $\frac{x^k-1}{x^{k/15}-1}$.
\end{proof}

\begin{lemma}
Suppose that $\rho$ is 
an irreducible spin representation of $\tilde S_n$ 
corresponding to $\lambda\in\DP_n$,
where $n\geq14$ and $\lambda\neq(n),(n-1,1)$. 
Let $g \in \tilde S_n$ have cycle type $\alpha$ and $k=\lcm(\alpha)$.
If $3,5\in\alpha$ and other parts of $\alpha$ are coprime to $15$,
then the minimal polynomial of $\rho(\sigma)$ equals 
$x^{k}-\varepsilon_g$.
\end{lemma}
\begin{proof}
Consider $g=g_1g_2$ as an element of the Young subgroup $\tilde S_{3}\times_c\tilde S_{n-3}$ where $g_1=-\sigma_3\in\tilde S_{3}$, $g_2\in\tilde S_{n-3}$.
Let $\beta$ be the cycle type of $g_2$ and $k=\lcm(\beta)$.
Using Lemma~\ref{lemma:(3)_(2,1)_one_different_from_basic}, there exist partitions $\mu,\nu \in \DP_{n-3}$, different from $(n-3)$, such that $f_{(3)\mu}^\lambda>0$ and $f_{(2,1)\nu}^\lambda>0$.
Therefore, $\Res^{\tilde S_n}_{\tilde S_3 \times_c \tilde S_{n-3}} V_\lambda \geq V_{(3)} \otimes_c V_\mu, V_{(2,1)} \otimes_c V_\nu$.
By induction and $n-3>8$, we obtain that $\Sp_\mu(g_2) = \Sp_\nu(g_2) = \Omega(k, \varepsilon_g)$.

Recall that $\Sp_{(3)}(g_1) = \Omega(3,1) \setminus \{1\}$.
Using Lemma~\ref{lemma:spectra_of_product_of_two_modules}, we see that $\Sp_{V_{(3)} \otimes V_\mu}(g) = \big( \Omega(3,1) \setminus \{1\} \big) \times \Omega(t,\varepsilon_g)$ regardless of parity of $g_2$.
Similarly, using Lemma~\ref{lemma:spectra_of_product_of_two_modules}, we get that $\Sp_{V_{(2,1)} \otimes V_\mu}(g) = \{1\} \times \Omega(t,\varepsilon_g)$ regardless of parity of $g_2$.
Hence, $\Sp_\lambda(g) = \Omega(3,1) \times \Omega(t,\varepsilon_g) = \Omega(3t,\varepsilon_g)$.
This completes the proof.
\end{proof}

\begin{lemma}
Suppose that $\rho$ is 
an irreducible spin representation of $\tilde S_n$ 
corresponding to $\lambda\in\DP_n$,
where $n\geq14$ and $\lambda\neq(n)$. 
Let $g\in \tilde S_n$ have cycle type $\alpha$ and $k=\lcm(\alpha)$.
If $3,5\not\in\alpha$, then the minimal polynomial of $\rho(g)$ equals $x^{k}-\varepsilon_g$.
\end{lemma}
\begin{proof}
We start by verifying the lemma for $n=15,16$ directly using Sage.
Therefore, we may assume that $n\geq 17$.
Let $n_1$ be the least part of $\alpha$ and therefore, $n_2:= n-n_1\geq 9$.
Consider $g$ as an element of the Young subgroup $\tilde S_{n_1}\times_c\tilde S_{n_2}$ with $n_1+n_2=n$
and $g=g_1g_2$, where $g_1\in\tilde S_{n_1}$ and  $g_2\in\tilde S_{n_2}$. 
We take $g_1 = \sigma_{n_1}$. Let $\beta$ be the cycle type of $g_2$ and $k=\lcm(\beta)$.
Let $\mu \in \DP_{n_1}$ be the partition contained in $\lambda$ which is maximum in lexicographic order.
Using Lemma~\ref{lemma:most_dominant_alpha_beta}, there exists a partition $\nu \in \DP_{n_2}$ such that $f_{\mu\nu}^\lambda>0$.
By induction and $n_2 \geq 9$, we obtain that $\Sp_{\nu}(g_2) = \Omega(k,\varepsilon_{g_2})$.

Notice that since $\lambda_1\geq 6$, either $\mu = (n_1)$ or $\mu_1\geq 6$.
Therefore, by induction, we get that either $\Sp_{\mu}(g_1) = \Omega(n_1,\varepsilon_{g_1})$ or $\mu = (n_1)$ and $n_2 \in \{2,4,6,8\}$.
In the former case, the theorem follows by Corollary~\ref{corollary:product_of_two}.
In the latter case, we can simply change $\mu$ with $(n_1-1,1)$ when $n_1\in \{4,6,8\}$.
By Lemma~\ref{lemma:most_dominant_alpha_beta}, there exists $\nu \in \DP_{n_2}$ such that $f_{\mu\nu}^\lambda>0$.
By induction, it follows that $\Sp_{\mu}(g_1) = \Omega(n_1,\varepsilon_{g_1})$ and $\Sp_{\nu}(g_2) = \Omega(t,\varepsilon_{g_2})$. Now the result follows from Corollary~\ref{corollary:product_of_two}.

The remaining case is $n_2=2$.
Using Lemma~\ref{lemma:spectra_of_product_of_two_modules}, we have
$\Sp_{V_{(2)} \otimes_c V_\nu}(g) = A \times \Sp_{(2)}(g_1) \times \Sp_{\nu}(g_2) $, where $A \in\{ \{1\}, \{\pm 1\} , \{\pm i\}\}.$

If $\beta$ has an even part, then by induction, we have $\Sp_\nu(g_2) = \{\pm 1\} \times \Omega(t,\varepsilon_{g_2})$.
Therefore, $\Sp_{V_{(2)} \otimes_c V_\nu}(g) = A \times \{\pm i\} \times \Sp_{\nu}(g_2) $. Now the result follows from Proposition~\ref{proposition:product_of_two_sets_cardiality_in_complex_numbers}.

If $\lambda \neq (2) \cup \beta$, then using Theorem~\ref{theorem:Littlewood-Richardson-rule}, we obtain that $\Res^{\tilde S_n}_{\tilde S_2 \times_c \tilde S_{n-2}} V_\lambda \geq ((V_{(2)} \otimes_c V_\nu) \oplus (V_{(2)}' \otimes_c V_\nu))$.
Hence, $\Sp_\lambda (g) \supseteq A \times \{\pm i\} \times \Omega(t,\varepsilon_{g_2})$ and we are done by Proposition~\ref{proposition:product_of_two_sets_cardiality_in_complex_numbers}.

Finally, we left out with $\lambda = (2) \cup \beta$ with $\beta \in \OP_n$.
Let $\gamma:= (\lambda_1-1,\lambda_2,\dots,\lambda_{\ell-1},\lambda_\ell-1) \in \DP_{n-2}$, where $\ell$ is the length of $\lambda$.
Since $\gamma\subset \lambda$, by Lemma~\ref{lemma:most_dominant_alpha_beta}, $f_{(2)\gamma}^\lambda>0$.
Now we apply the previous case, as $\gamma \cup (2) \neq \lambda$.
This completes the proof.
\end{proof}

\section{Proof of results on double cover of alternating groups}
\label{section:minimal_poly_of_any_element_in_A_n}
The goal of this section is to prove Theorem~\ref{theorem:main_alt}.

\begin{notation}
	For the double cover of a Young subgroup
	$
	\tilde A_{\lambda_1}\times_c \cdots \times_c \tilde A_{\lambda_l},
	$
	the irreducible representations are not, in general, ordinary tensor products, since this subgroup is a central product rather than a direct product.
    However, by convention, if $V_i$ is an irreducible representation of $\tilde A_{\lambda_i}$ for each $i$, we write
	$
	V := V_1 \otimes \cdots \otimes V_l
	$
	for the corresponding irreducible representation of the central product $\tilde A_{\lambda_1}\times_c \cdots \times_c \tilde A_{\lambda_l}$, with the central element $-1$ acting as $-I$ on $V$.
\end{notation}

The following lemmas are used in the proof of Theorem~\ref{theorem:main_alt} for $n$-cycles.
As in Subsection~\ref{subsection:arbitrary_n_cycles}, for $n=mp$ with $p$ prime, let $\eta_n$ be an $n$-cycle such that $\eta_n^p=w_1w_2\cdots w_p$, where $\pi(w_i)=(m(i-1)+1~m(i-1)+2~\dots~m(i-1)+m)$. Define $E_n=\langle \eta_n \rangle.$
\begin{lemma}\label{lemma:cycle_linear_characters_two_distinct_A_n}
	Let $n=mp>27$ be an odd integer, where $m\in\{7,9\}$ and $m \geq p>1$.
	Suppose that $V = V_{(m)}^{i_1} \otimes V_{(m)}^{i_2} \otimes \cdots \otimes V_{(m)}^{i_p}$ is an irreducible spin representation of $\tilde A_{(m^{ p})}$, where $i_j \in \{+,-\}$ for all $j\in [p]$.
	Let $\delta$ be a linear character of $E_n=\langle \eta_n\rangle$.
	Then there exist linear characters $\delta_1,\delta_2,\ldots,\delta_p$ of $C_m$, with at least two of them distinct, such that $\Res^{\tilde A_m}_{C_m} V_{(m)}^{i_j} \geq \delta_j$ for all $j\in [p]$, and
	\begin{displaymath}
	\Res^{E_n}_{\langle \eta_n^p \rangle} \delta = \Res^{C_m \times \cdots \times C_m}_{\langle w_1w_2\cdots w_p \rangle} \delta_1 \otimes \cdots \otimes \delta_p,
	\end{displaymath}
	where $\eta_n^p=w_1w_2\cdots w_p$.
\end{lemma}

\begin{proof}
	Let $\omega_m = e^{2\pi i/m}$ be the primitive $m$-th root of unity.
	To prove the lemma, it is enough to find $m$-th roots of unity $\zeta_1,\ldots,\zeta_p$, with at least two distinct, such that $\zeta_1\zeta_2\cdots \zeta_p = \delta(\eta_n^p)$ and $\zeta_j$ is an eigenvalue of $w_j$ in $V_{(m)}^{i_j}$ for all $j\in [p]$.


	We begin with the case $m=7$, so $p\in\{5,7\}$.
	The minimal polynomial of $\sigma_7^{+\pm}$ in $V_{(7)}^\pm$ is given by:
	\begin{center}
		\begin{tabular}{c|c|c}
			 & $\sigma_7^{++}$ & $\sigma_7^{+-}$ \\
			\hline
			$V_{(7)}^+$ & $(x-1)(x-\omega_7^3)(x-\omega_7^5)(x-\omega_7^6)$ & $(x-1)(x-\omega_7)(x-\omega_7^2)(x-\omega_7^4)$ \\
			\hline
			$V_{(7)}^-$ & $(x-1)(x-\omega_7)(x-\omega_7^2)(x-\omega_7^4)$ & $(x-1)(x-\omega_7^3)(x-\omega_7^5)(x-\omega_7^6)$
		\end{tabular}
	\end{center}
	Let $\zeta:= \delta(\eta_n^p)$.
	A direct computation shows that there exist $7$-th roots of unity $\zeta_1,\zeta_2,\zeta_3$, with at least two of them distinct, such that $\zeta=\zeta_1\zeta_2\zeta_3$ and $\zeta_j$ an eigenvalue of $w_j$ in $V_{(7)}^{i_j}$ for all $j\in [3]$.
	We set $\zeta_j = 1$ for all $4 \leq j \leq 7$.
	Then $\{\zeta_j\}_{j=1}^p$ satisfies the required condition.

	Finally, consider the case $m=9$, so $p\in\{5,7\}$.
	The minimal polynomial of $\sigma_9^{+\pm}$ in $V_{(9)}^\pm$ is given by:
	\begin{center}
		\begin{tabular}{c|c|c}
			 & $\sigma_9^{++}$ & $\sigma_9^{+-}$ \\
			\hline
			$V_{(9)}^+$ & $\tfrac{x^9-1}{x^2+x+1}$ & $\tfrac{x^9-1}{x-1}$ \\
			\hline
			$V_{(9)}^-$ & $\tfrac{x^9-1}{x-1}$ & $\tfrac{x^9-1}{x^2+x+1}$
		\end{tabular}
	\end{center}
	Let $\zeta:= \delta(\eta_n^p)$.
	A direct computation shows that there exist $9$-th roots of unity $\zeta_1,\zeta_2,\zeta_3$, with at least two of them distinct, such that $\zeta=\zeta_1\zeta_2\zeta_3$ and $\zeta_j$ an eigenvalue of $w_j$ in $V_{(9)}^{i_j}$ for all $j\in [3]$.
	For the remaining factors, we set $\zeta_4(w_4) = \zeta_6 (w_6) = \cdots = \zeta_{p-1} (w_{p-1}) = \omega_m$ and $\zeta_5(w_5) = \zeta_7(w_7) = \cdots = \zeta_p(w_p) = \omega_m^{-1}$.
	Then $\{\zeta_j\}_{j=1}^p$ satisfies the required condition.
\end{proof}

The following corollary is an immediate consequence of Proposition~\ref{proposition:product_of_two_sets_cardiality_in_complex_numbers}.
\begin{lemma}
	\label{lemma:alt_full_times_full_is_full}
	Let $m,k\in\mathbb{N}$ and $n=m+k$.
	Let $V_1$ and $V_2$ be irreducible spin modules for $\tilde A_m$ and $\tilde A_k$, respectively.
    Suppose that $\pi_1\in \tilde A_m$ and $\pi_2 \in \tilde A_k$.
	If $\pi_i$ has $o(\pi_i)$ distinct eigenvalues in $V_i$ for $i=1,2$, then $\pi_1\pi_2$ has $o(\pi_1\pi_2)$ distinct eigenvalues in the irreducible module $V_1 \otimes V_2$ for $\tilde A_m \times_c \tilde A_k$.
\end{lemma}

The following reduction lemma simplifies the computation of minimal polynomials in most cases.

\begin{lemma}\label{lemma:double_A_n_not_DP_n^-}
	Let $g \in \tilde A_n$ with cycle type $\alpha$, and let $\lambda \in\DP_n$.
	Then, for any irreducible spin representation $V_\lambda$ of $\tilde A_n$, the minimal polynomial of $g$ in $V_\lambda$ is the same as in the irreducible representation $V$ of $\tilde S_n$ containing $V_\lambda$ upon restriction to $\tilde A_n$, provided either $\lambda \notin\DP_n^+$ or $\lambda \in\DP_n^+$ with $\lambda \neq \alpha$.
\end{lemma}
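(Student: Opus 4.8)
The plan is to reduce the statement to a comparison of eigenvalue sets, using that $\rho(g)$ has finite order and hence is diagonalizable, so its minimal polynomial is the squarefree product $\prod_{\zeta\in\Sp_V(g)}(x-\zeta)$ and is determined by the set $\Sp_V(g)$ alone. Thus it suffices to show, in each of the two cases, that the eigenvalue set of $g$ on $V_\lambda$ (as a $\tilde A_n$-module) equals the eigenvalue set of $g$ on $V$ (as a $\tilde S_n$-module).

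First I would dispose of the case $\lambda\notin DP_n^+$, i.e. $\lambda\in DP_n^-$. By Lemma~\ref{lemma:restriction_to_A_n}, $\phi_\lambda$ restricts irreducibly to $\tilde A_n$, so $V=\phi_\lambda$ and $V_\lambda=\Res^{\tilde S_n}_{\tilde A_n}V$ are the same vector space and $\rho(g)$ is literally the same linear operator for $g\in\tilde A_n$; the minimal polynomials therefore coincide trivially.

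Next, suppose $\lambda\in DP_n^+$ with $\lambda\neq\mu$. By Lemma~\ref{lemma:restriction_to_A_n}, $\Res^{\tilde S_n}_{\tilde A_n}V=\phi_\lambda^+\oplus\phi_\lambda^-$, whence $\Sp_V(g)=\Sp_{\phi_\lambda^+}(g)\cup\Sp_{\phi_\lambda^-}(g)$ as sets, and it is enough to prove these two sets coincide. I would in fact show the stronger statement that $\phi_\lambda^+$ and $\phi_\lambda^-$ have equal characters on the cyclic subgroup $\langle g\rangle$: for every $k$ we have $\phi_\lambda^+(g^k)-\phi_\lambda^-(g^k)=\Delta_\lambda(g^k)$, which by Theorem~\ref{theorem:Delta_details_double_alternating} vanishes unless $g^k$ has cycle type $\lambda$. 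So the crux is the combinatorial claim: if $\lambda\in DP_n$ and $\lambda\neq\mu$, then no power of $\pi(g)$ has cycle type $\lambda$. This follows from the description of cycle types of powers of a permutation: writing $\mu=(\mu_1,\dots,\mu_l)$, a $\mu_i$-cycle raised to the $k$-th power splits into $\gcd(\mu_i,k)$ cycles of length $\mu_i/\gcd(\mu_i,k)$. If $\mu$ has a repeated part, then two equal cycles split identically, so $\pi(g)^k$ always has a repeated part and cannot be of type $\lambda\in DP_n$; if $\mu$ has distinct parts, then $\pi(g)^k$ has distinct parts precisely when $k$ is coprime to $\lcm(\mu)$, and then its cycle type is $\mu\neq\lambda$. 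Either way $\Delta_\lambda(g^k)=0$ for all $k$, so $\phi_\lambda^+|_{\langle g\rangle}\cong\phi_\lambda^-|_{\langle g\rangle}$; since $\langle g\rangle$ is abelian, these decompose into one-dimensional characters in the same way, so $g$ acts with the same eigenvalues, with multiplicities, on $\phi_\lambda^+$ and $\phi_\lambda^-$. Hence $\Sp_{\phi_\lambda^+}(g)=\Sp_{\phi_\lambda^-}(g)=\Sp_V(g)$, and the minimal polynomials agree.

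I expect the only genuine point requiring care to be the combinatorial claim in the last paragraph; everything else is bookkeeping around Lemma~\ref{lemma:restriction_to_A_n} and Theorem~\ref{theorem:Delta_details_double_alternating}. One small subtlety worth recording is that $\Delta_\lambda(z\cdot h)=-\Delta_\lambda(h)$ and that $z\cdot h$ has the same cycle type as $h$ (cycle type being read off from the image in $S_n$), so the presence of the central factor when writing out powers of $g$ does not affect the argument. As a remark one may note that when $\mu\notin DP_n^+$ an alternative proof of $\Sp_{\phi_\lambda^+}(g)=\Sp_{\phi_\lambda^-}(g)$ is available via Lemma~\ref{lemma:conjugacy_classes_A_n}, since then $g$ is $\tilde A_n$-conjugate to $\tau g\tau^{-1}$ for an odd element $\tau$, and conjugation by $\tau$ interchanges $\phi_\lambda^+$ and $\phi_\lambda^-$.
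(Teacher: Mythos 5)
Your proof is correct and follows essentially the same route as the paper: both reduce to showing $\Delta_\lambda(g^k)=0$ for all $k$ via Theorem~\ref{theorem:Delta_details_double_alternating} and then compare characters on $\langle g\rangle$. You in fact spell out the one point the paper leaves implicit, namely the combinatorial verification that no power of $\pi(g)$ has cycle type $\lambda$ when $\lambda\in DP_n$ and $\lambda\neq\mu$, which is a welcome addition.
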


\begin{proof}
	Let $\phi_\lambda$ (respectively, $\phi_\lambda^\pm$) be the character of the irreducible representation $V_\lambda$ (respectively, $V_\lambda^\pm$) of $\tilde A_n$.
	By Lemmas~\ref{lemma:conjugacy_classes_A_n} and~\ref{lemma:restriction_to_A_n}, and Theorem~\ref{theorem:Delta_details_double_alternating}, we have that $\phi_\lambda(g^i)$ (respectively, $\phi_\lambda^\pm(g^i)$) is either equal to $\phi_\lambda(g^i)$ or to $\phi_\lambda(g^i)/2$ for all $i$.
	By computing inner products of characters, the minimal polynomial of $g$ in $V_\lambda$ (or $V_\lambda^\pm$) is identical to the minimal polynomial of $g$ in $V_\lambda$ viewed as a representation of $\tilde S_n$.
	This completes the proof.
\end{proof}

In order to prove Theorem~\ref{theorem:main_alt}, by Lemma~\ref{lemma:double_A_n_not_DP_n^-}, it suffices to consider the case $\lambda = \alpha \in \DP_n^+$.
We split the proof into three cases: first $\alpha=(n)$, second all parts of $\alpha$ even, finally the general case.

When $n$ is prime, we have the following result, proved by Zalesskii~\cite{Zalesski_quasi_cyclic_sylow_1999} in a more general context concerning minimal polynomials of elements of prime order in quasi-simple groups.

\begin{lemma}
	\label{lemma:zalesskii_prime_alt}
	Let $g \in \tilde A_p$ with cycle type $(p)$, where $p\geq 7$ is prime, such that the order of $g$ is~$p$.
	Then the minimal polynomial of $g$ is $x^p-1$ in all irreducible spin representations $V$ of $\tilde A_p$, except when $V=V_{(7)}^\pm$.
\end{lemma}

\begin{proof}
	We may assume that $V$ is irreducible.
	Using Sage, we may verify the lemma for primes $p\leq 13$, so we assume that $p\geq 17$.
	Suppose that $V$ is an irreducible spin representation of $\tilde S_p$ with character $\phi$.
	Let $\psi$ be an irreducible spin character of $\tilde S_p$ such that $\psi\leq \Ind_{\tilde A_p}^{\tilde S_p} \phi$.
	If $V\neq V_{(p)}^\pm$, then using Theorem~\ref{theorem:Delta_details_double_alternating}, we have $\phi(g^i)=\psi(g^i)/2$ or $\psi(g^i)$ for all $i$.
	Now the result follows from a straightforward inner-product computation and Lemma~\ref{lemma:zalesskii_prime_sym}.

	Now suppose that $V=V_p^+$ or $V_p^-$.
	Let $\phi$ denote its character.
	Then
	\begin{displaymath}
	\phi(g^i)=\frac{\pm 1 \pm i^{(p-1)/2} \sqrt{p}}{2}.
	\end{displaymath}
	In particular, $|\phi(g^i)|\leq \frac{1+\sqrt{p}}{2}$.
	Also, $\phi(1)=2^{\frac{p-3}{2}}> p \sqrt{p}$ for all $p\geq 17$.
	Then, for any irreducible representation $\delta$ of $\langle g \rangle$, we get
	\begin{align*}
	\langle \Res^{\tilde A_p}_{\langle g \rangle} \phi, \delta \rangle 
	& \geq 2^{\frac{p-3}{2}} - (p-1) \tfrac{(\sqrt{p}+1)}{2}
	\geq 2^{\frac{p-3}{2}} - p\sqrt{p}
	>0.
	\end{align*}
	This completes the proof.
\end{proof}

\begin{theorem}\label{theorem:cycle_double_A_n_odd}
Let $n\geq 3$ be an odd integer and $\lambda \in \DP_n^+$.
Then the minimal polynomial of any $n$-cycle in $V_{(n)}^\pm$ has $n$ distinct eigenvalues, except for the cases listed in Table~\ref{table:exceptional_cases_1_8_alt}.
\end{theorem}
\begin{proof}
	The proof is by induction on $n$. We can verify the theorem directly using Sage for $n\leq 27$, so we assume that $n>27$.
	If $n$ is prime, the result follows from Lemma~\ref{lemma:zalesskii_prime_alt}.
	Otherwise, write $n=mp$, where $p$ is the smallest prime divisor of $n$.
	In particular, $m\geq 7$. 
    Let $V = V_{(n)}^+$ or $V_{(n)}^-$.
	Then
	\begin{displaymath}
	\Res^{\tilde A_n}_{\tilde A_{(m^{p})}} V \geq V_1 \otimes V_2 \otimes \cdots \otimes V_p,
	\end{displaymath}
	for some irreducible spin representations  $V_j$  of $\tilde A_m$ for all $j\in [p]$.
	By repeated use of the shifted Littlewood--Richardson rule~\ref{theorem:Littlewood-Richardson-rule}, each $V_j$ is either $V_{(m)}^+$ or $V_{(m)}^-$.

    It suffices to show that the $n$-cycle $\eta_n$ has $n$-distinct eigenvalues in $V$.
	Let $\delta$ be a linear character of $E_n=\langle \eta_n\rangle$.
	By induction and Lemmas~\ref{lemma:m_odd_p_nontrivial_atleast_two_distinct} and~\ref{lemma:cycle_linear_characters_two_distinct_A_n}, there exist linear characters $\delta_1,\delta_2,\ldots,\delta_p$ of $C_m$, with at least two distinct, such that $\Res^{\tilde A_m}_{C_m} V_j \geq \delta_j$ for all $j\in [p]$ and
	\begin{align}\label{equation:restriction_to_C_m_w_1w_2cdots w_p_alt}
		\Res^{C_n}_{\langle \eta_n^p \rangle} \delta
		=
		\Res^{C_m \times \cdots \times C_m}_{ \langle w_1w_2\cdots w_p \rangle}
		\delta_1 \otimes \cdots \otimes \delta_p,
	\end{align}
	where $\eta_n^p=w_1w_2\cdots w_p$.

	We proceed as in Case 1 of the proof of Theorem~\ref{theorem:cycle_double_S_n}.
	Since $\Res^{\tilde A_{(m^p)}}_{C_m \times \cdots \times C_m} V_1 \otimes V_2 \otimes \cdots \otimes V_p \geq \delta_1 \otimes \cdots \otimes \delta_p$ and $\Ind_{C_m \times \cdots \times C_m}^{C_m \wr C_p} \delta_1 \otimes \cdots \otimes \delta_p$ is irreducible, we have 
	\begin{displaymath}
		\Res^{\tilde A_n}_{C_m \wr C_p} V
		\geq
		\Ind_{C_m \times \cdots \times C_m}^{C_m \wr C_p}
		\delta_1 \otimes \cdots \otimes \delta_p.
	\end{displaymath}

	Restricting further to $\langle \eta_n \rangle$ and applying~\eqref{equation:restriction_to_C_m_w_1w_2cdots w_p_alt}, we get
	\begin{displaymath}
		\Res^{\tilde A_n}_{\langle \eta_n \rangle} V \geq \delta.
	\end{displaymath}
	Thus, the minimal polynomial of $\eta_n$ in $V$ equals $x^n-1$.
\end{proof}

To proceed with the proof of the general case, we need the following lemmas.

\begin{lemma}~\label{lemma:main_alt_two_even_cycles}
	Theorem~\ref{theorem:main_alt} holds when $\lambda=\alpha=(p,q)$ and $p,q$ are even positive integers.
\end{lemma}

\begin{proof}
	If $p\leq 10$, there are only finitely many such $\lambda$, and we can verify these cases using Sage.
	Therefore, we may assume that $p\geq 12$.
	Let $D_{(p,q)}$ denote the cyclic subgroup of $\tilde A_n$ generated by $\sigma_{(p,q)}$, and let $m$ denote the order of $\sigma_{(p,q)}$.
	
	For a linear spin character $\delta$ of $D_{(p,q)}$, Lemma~\ref{lemma:lower_bound_for_(p,q)_(p,q)} gives
	\begin{align}~\label{equation:lower_bound_for_(p,q)_(p,q)_alt}
	\langle \Res^{\tilde A_n}_{D_{(p,q)}}(  \phi_\lambda^+ + \phi_\lambda^-), 	\delta \rangle_{D_{(p,q)}} = \langle \Res^{\tilde S_n}_{D_{(p,q)}}  \phi_\lambda , 	\delta \rangle_{D_{(p,q)}}  >p.
	\end{align}

	Now compute, using Theorem~\ref{theorem:Delta_details_double_alternating}:
	\begin{align*}
		\langle \Res^{\tilde A_n}_{D_{(p,q)}}  \phi_\lambda^+ - \phi_\lambda^-, 	\delta \rangle_{D_{(p,q)}}
		&=
		\frac{1}{m} \sum_{i=0}^m \Delta_\lambda(\sigma_{(p,q)}^i) \overline{\delta(\sigma_{(p,q)}^i)} \\
		&=
		\frac{1}{m} \sum_{i=0 \text{ and } (i,m)=1}^m (\pm)i^{(n-2)/2} \sqrt{pq} \overline{\delta(\sigma_{(p,q)}^i)} \\
		&\leq \frac{1}{m} \varphi(m) \sqrt{pq}
		\leq \sqrt{pq}
		\leq p,
	\end{align*}
	where $\varphi$ is the Euler's totient function.
	The lemma now follows from this estimate and~\eqref{equation:lower_bound_for_(p,q)_(p,q)_alt}.
\end{proof}

\begin{corollary}\label{corollary:main_alt_all_parts_even}
	Theorem~\ref{theorem:main_alt} holds when $\lambda=\alpha$ and all parts of $\lambda$ are even.
\end{corollary}

\begin{proof}
	Let $W$ be an irreducible spin representation of $\tilde A_n$.
    Let $l=\ell(\lambda)$.
	Then
	\begin{displaymath}
	\Res^{\tilde A_n}_{\tilde A_{(\lambda_1 +\lambda_2, \dots, \lambda_{l-1}+\lambda_l)}} W \geq V_1 \otimes V_2 \otimes \dots \otimes V_{l/2},
	\end{displaymath}
	for some irreducible spin representations $V_j$ of $\tilde S_{\lambda_{2j-1}+\lambda_{2j}}$, where $j\in [l/2]$.
	By Lemma~\ref{lemma:main_alt_two_even_cycles}, $\Sp_{V_j}(\sigma_{(\lambda_{2j-1},\lambda_{2j})})$ has $\lcm(\lambda_{2j-1},\lambda_{2j})$ distinct elements for all $j\in [l/2]$.
	A repeated application of Lemma~\ref{lemma:alt_full_times_full_is_full} now shows that $\Sp_W(\sigma_\lambda)$ has $\lcm(\lambda)$ distinct elements.
\end{proof}

We now treat the remaining cases for double covers of alternating groups.


\begin{proof}[Proof of Theorem~\ref{theorem:main_alt}]
	We prove the theorem by induction on $n$.
	We verify the theorem directly for $n\leq 18$, so assume $n\geq 19$.

    By lemma~\ref{lemma:double_A_n_not_DP_n^-} and Theorem~\ref{theorem:main}, the theorem follows if $\lambda \in \DP_n^- $ or $\lambda \in \DP_n^+$ with $\lambda \neq \alpha$.
    Therefore, we may assume that $\lambda = \alpha$ and $\lambda\in \DP_n^+$.
    Recall that it suffices to show that $g$ has $\lcm(\alpha)$ many distinct eigenvalues in $V_\lambda$.

	If $l:= \ell(\lambda)=1$, then the result follows from Theorem~\ref{theorem:cycle_double_A_n_odd}.
	Thus, assume that $\lambda$ has at least two parts.
	If all parts of $\lambda$ are even, then the result follows from Corollary~\ref{corollary:main_alt_all_parts_even}.
	Therefore, $\lambda$ has at least one odd part.
	If $\lambda=(p,q)$ has two odd parts, then
	\begin{displaymath}
	\Res^{\tilde S_n}_{\tilde S_p \times_c \tilde S_q} V_\lambda \geq V_{(p)} \otimes (V_{(q)} \oplus V_{(q-1,1)})
	\end{displaymath}
	by the shifted Littlewood--Richardson rule.
    Therefore, $V_{(p)}^+ \otimes V_{(q)}^+ \leq \Res^{\tilde A_n}_{\tilde A_p \times_c \tilde A_q} V_\lambda^\epsilon$ for some $\epsilon \in \{\pm\}$.
    Conjugating this equation by $t:=t_1$, yields  
    $V_{(p)}^- \otimes V_{(q)}^+ \leq \Res^{\tilde A_n}_{\tilde A_p \times_c \tilde A_q} V_\lambda^{\tilde\epsilon}$, where $\tilde\epsilon= \begin{cases}
        - & \text{if } \epsilon=+,\\
        +  & \text{otherwise.}\\
    \end{cases}$
    Here, we have used the fact that $t_1$ commutes with every element of $\tilde A_q$ and $V_{(p)}^{\epsilon} = V_{(p)}^{\tilde \epsilon}$.
    Clearly, $V_{(p)}^\epsilon \otimes V_{(q-1,1)} \leq \Res^{\tilde A_n}_{\tilde A_p \times_c \tilde A_q} V_\lambda^+$ for some $\epsilon\in \{\pm\}$.

    In any case, we obtain that for each $\epsilon \in \{\pm\}$, there exist $\epsilon_1,\epsilon_2, \epsilon_3 \in \{\pm\}$, such that
    \begin{equation}~\label{equation:alt_(p,q)_both_odd}
        \Res^{\tilde A_n}_{\tilde A_p \times_c \tilde A_q} V_\lambda^\epsilon \geq \big(V_{(p)}^{\epsilon_1} \otimes V_{(q-1,1)} \big), \big(V_{(p)}^{\epsilon_2} \otimes V_{(q)}^+ \big),\big(V_{(p)}^{\epsilon_3} \otimes V_{(q)}^- \big).
    \end{equation}
    Since $p$ and $q$ are odd, we may assume, multiplying $g$ by $-1$ if necessary, that $g$ has order $\lcm(p,q)$.
    Write $g = g_1 g_2$ with $g_1 \in \tilde A_p$ have order $p$ and $g_2 \in \tilde A_q$ have order $q$.
    Since $p\geq 11$, by induction $\Sp_{(p)^{\epsilon_1}}(g_1) = \Sp_{(p)^{\epsilon_2}}(g_1) = \Sp_{(p)^{\epsilon_3}}(g_1) = \Omega(p,1)$.
    Once again by induction, we obtain that $\Sp_{(q)^{+}}(g_2) \cup \Sp_{(q)^{-}}(g_2) \cup \Sp_{(q-1,1)}(g_2) = \Omega(q,1)$.
    Therefore, by Eq.~\eqref{equation:alt_(p,q)_both_odd}, 
    $\Sp_\lambda(g) \supset \Omega(p,1) \times \Omega(q,1)$ which by Proposition~\ref{proposition:product_of_two_sets_cardiality_in_complex_numbers} contains $\lcm(p,q)$ many distinct elements.

	
	Now consider the case where $\lambda$ has at least three parts.
	Define
	\begin{displaymath}
	p=
	\begin{cases}
		3 & \text{if } 3\in \lambda, \\
		5 & \text{if } 5\in \lambda \text{ but } 3\notin \lambda, \\
		\max\{\lambda_i : \lambda_i \text{ is odd}\} & \text{otherwise}.
	\end{cases}
	\end{displaymath}
    Write $g=g_1 g_2$ with $g_1 \in \tilde S_{p}$ and $g_2 \in \tilde S_{n-p}$ such that $g_1$ has cycle type $(p)$ with order $p$.
    Let $\beta$ be the cycle type of $g_2$ and $t=\lcm(\beta)$.
	By Lemma~\ref{lemma:combinatorial_lemma_for_double_A_n}, we have
	\begin{displaymath}
	\Res^{\tilde S_n}_{\tilde S_p \times_c \tilde S_{n-p}} V_\lambda \geq V \otimes V_{(p)} \otimes V_{\mu},
	\end{displaymath}
    and if $p>2$,
    \begin{displaymath}
	\Res^{\tilde S_n}_{\tilde S_p \times_c \tilde S_{n-p}} V_\lambda \geq V \otimes V_{(p-1,1)} \otimes V_{\nu},
	\end{displaymath}
    where $\mu,\nu$ are different from $(n-p)$.

    By induction, we get that for each $\epsilon \in \{\pm\}$, there exist $\epsilon,\epsilon_1,\epsilon_2,\epsilon_3 \in \{\pm\}$, such that
    \begin{displaymath}
	\Res^{\tilde A_n}_{\tilde A_p \times_c \tilde A_{n-p}} V_\lambda^{\epsilon} \geq V_{(p)}^{\epsilon_1} \otimes V_{\mu}^{\epsilon_2},  V_{(p-1,1)} \otimes V_{\nu}^{\epsilon_3}.
	\end{displaymath}
    By induction, we have $\Sp_{\mu^{\epsilon_2}}(g_2) = \Sp_{\nu^{\epsilon_3}}(g_2) = \Omega(t,\varepsilon_g)$ and $\Sp_{(p)^{\epsilon_1}}(g_1) \cup \Sp_{(p-1,1)}(g_1) = \Omega(p,1)$.
    Hence, $\Sp_{\lambda^\epsilon}(g) \supseteq \Omega(p,1) \times \Omega(t,\epsilon_g)$ which by Proposition~\ref{proposition:product_of_two_sets_cardiality_in_complex_numbers} contains $\lcm(\alpha)$ distinct elements.
    This completes the proof.
\end{proof}
\section{Concluding remarks}
We close the paper with further concluding remarks and open problems, 
complementing the one posed in the introduction.

A complete characterization of the minimal polynomials of elements of a group in all irreducible representations has been obtained only for the symmetric and alternating groups (see~\cite{Staroletov_all_eigenvalues}) and their double covers (see Theorems~\ref{theorem:main} and~\ref{theorem:main_alt}).
For all other finite simple groups, and more generally for all quasi-simple groups, the problem remains open.
For related problems in this direction, see~\cite{survey_Zalesski_eigenvalue_1_2025}.

We also note that, as mentioned in the introduction, the analogous problem in positive characteristic~$q$ is considerably more difficult and remains open even for the symmetric and alternating groups.
The strongest known result in this direction is due to Thompson~\cite[Lemma~3.2]{Thompson_composition_factors_2008}, who handles the case $G = A_n$ with $g$ an $n$-cycle, where $n$ is prime and $q < n < 2q$.


For the symmetric group $S_n$, the multiplicities of eigenvalues of permutations admit combinatorial interpretations: in terms of the major index~\cite{Kraskiewicz_Weyman} for $n$-cycles, and in terms of the multi-major index~\cite{JAM} for general permutations, on standard and semistandard Young tableaux, respectively.
It would be interesting to find an analogous combinatorial interpretation for the multiplicities of eigenvalues of elements of $\tilde{S}_n$ in the irreducible spin representations, especially given that the combinatorics of shifted Young tableaux has been well developed by Sagan~\cite{Sagan_shifted_tableaux_1987} and Worley~\cite{Worley_shifted_tableaux_1984}.

\begin{problem}
	Find a statistic on shifted Young tableaux that gives a combinatorial interpretation of the multiplicities of the eigenvalues of elements of $\tilde{S}_n$ in the irreducible spin representations of $\tilde{S}_n$.
\end{problem}

Based on Sage computations, we have observed that the eigenvalue multiplicities, when nonzero, appear to be nearly evenly distributed for elements of $\tilde{S}_n$ in irreducible spin representations when $n$ is large.
We note that Theorem~\ref{theorem:cycle_double_S_n}, Corollary~\ref{corollary:lower_bound_non_basic}, and Lemma~\ref{lemma:lower_bound_for_(p,q)_(p,q)} provide linear lower bounds for certain specific elements of $\tilde{S}_n$.
This suggests the following question, analogous to~\cite[Theorem~1.9]{Swanson}.

\begin{problem}
Does an analogue of~\cite[Theorem~1.9]{Swanson} hold for $\tilde{S}_n$? Moreover, is it true that for every irreducible spin representation $V$ of $\tilde{S}_n$ and every element $g\in\tilde{S}_n$ of sufficiently large order, each eigenvalue of $g$ occurs with multiplicity at least
\[
\left\lfloor \frac{\dim(V)}{o(g)} \right\rfloor?
\]
\end{problem}

Finally, we conclude with a question about the distribution of orders of elements in $\tilde{S}_n$.
Let $u_n$ (resp.\ $v_n$) denote the number of conjugacy classes of $\tilde{S}_n$ whose elements have order $\operatorname{lcm}(\alpha)$ (resp.\ $2\operatorname{lcm}(\alpha)$), where $\alpha$ denotes the cycle type of the elements in the conjugacy class.
The orders of $\sigma_\alpha$ can be computed using Lemma~\ref{lemma:powers_of_pi_lambda}.

\begin{conjecture}
	$u_n > v_n$ for all $n \geq 30$.
\end{conjecture}

\begin{problem}
	Do $u_n$ and $v_n$ admit natural combinatorial interpretations? Does the difference $u_n - v_n$? Does $\frac{v_n}{u_n}$ tend to $\frac{1}{2}$ as $n \to \infty$?
\end{problem}

\begin{table}[H] 
\caption{Exceptional cases for $1\leq n\leq 8$ in $\tilde S_n$}
\label{table:exceptional_cases_1_8}
    \centering
    \renewcommand{\arraystretch}{1.0}
    \begin{tabular}{|c|cc|}
    \hline
        & \multicolumn{2}{c|}{$(\text{conjugacy class}, \text{minimal polynomial})$} \\
        \hline
        $n=2$  &   &  \\
        $\rho_{(2)}^\pm$ & $((2)^+, \frac{x^2+1}{x\pm i})$, & $((2)^-, \frac{x^2+1}{x\mp i})$ \\
        \hline
        $n=3$  &   &  \\
        $\rho_{(3)}$ & $((3)^+, \frac{x^3+1}{x+1})$, & $((3)^-, \frac{x^3-1}{x-1})$ \\
        $\rho_{(2,1)}^\pm$ & $((2,1)^+, \frac{x^2+1}{x\pm i})$, & $((2,1)^-, \frac{x^2+1}{x\pm i})$ \\
        & $((3)^+, \frac{x^3+1}{x^2-x+1})$ & $((3)^-, \frac{x^3-1}{x^2+x+1})$ \\
        \hline
        $n=4$  &   &  \\
        $\rho_{(4)}^\pm$ & $((3,1)^+, \frac{x^3+1}{x+1})$, & $((3,1)^-, \frac{x^3-1}{x-1})$ \\
        & $((4)^+, x^2\mp \sqrt{2}x+1)$, & $((4)^-, x^2\pm \sqrt{2}x+1)$ \\ \hline
        $n=5$  &   &   \\
        $\rho_{(5)}$ & $((3,1^2)^+, \frac{x^3+1}{x+1})$, & $((3,1^2)^-, \frac{x^3-1}{x-1})$, \\
        & $((3,2)^\pm, \frac{x^6+1}{x^2+1})$ & $((5)^\pm, \frac{x^5\pm1}{x\pm1})$ \\
        $\rho_{(3,2)}^\pm$ & $((5)^+, \frac{x^5+1}{x+1})$, & $((5)^-, \frac{x^5-1}{x-1})$ \\ & $((3,2)^+, x^4\pm\sqrt{3}x^3+2x^2\pm\sqrt{3}x+1)$ & $((3,2)^-, x^4\mp\sqrt{3}x^3+2x^2\mp\sqrt{3}x+1)$.  \\ \hline
        $n=6$  &   &   \\
        $\rho_{(6)}^\pm $ & $((6)^+, x^4\mp\sqrt{3}x^3-2x^2\pm\sqrt{3}x+1)$,
        & $((6)^-, x^4\pm\sqrt{3}x^3-2x^2\mp\sqrt{3}x+1)$, \\
        & $((3,1^3)^+, \frac{x^3+1}{x+1})$, & $((3,1^3)^-, \frac{x^3-1}{x-1})$, \\
        & $((5,1)^+, \frac{x^5+1}{x+1})$, & $((5,1)^-, \frac{x^5-1}{x-1})$, \\
        & $((3,2,1)^\pm, \frac{x^6+1}{x^2+1})$. & \\
        $\rho_{(3,2,1)}^\pm$ & $((3,2,1)^+, x^4\mp\sqrt{3}x^3+2x^2\mp\sqrt{3}x+1)$, & $((3,2,1)^-, x^4\pm\sqrt{3}x^3+2x^2\pm\sqrt{3}x+1)$, \\
        & $((3,3)^+, \frac{x^3-1}{x-1})$, & $((3,3)^-, \frac{x^3+1}{x+1})$, \\
        & $((5,1)^+, \frac{x^5+1}{x+1})$, & $((5,1)^-, \frac{x^5-1}{x-1})$, \\
         & $((6)^\pm, \frac{x^6-1}{x^2-1})$. & \\ \hline
	    $n=7$ & & \\
	    $\rho_{(7)}$ & $((3,1^4)^\pm, \frac{x^3\pm1}{x\pm1})$, & $((5,1^2)^\pm, \frac{x^5\pm1}{x\pm5})$,  \\
	      & $((3,2,1^2), \frac{x^6+1}{x^2+1})$, & $((3,2,2), \frac{x^6+1}{x^2+1})$, \\
	      & $((4,3)^\pm, \frac{x^{12}+1}{x^4+1})$, & $((5,1^2)^\pm, \frac{x^{5}\pm1}{x\pm1})$, \\
	      & $((5,2)^\pm, \frac{x^{10}+1}{x^2+1})$. & \\
	    \hline
	    $n=8$ & & \\
	    $\lambda=(8)^\pm$ & $((5,1^3)^+, \frac{x^{5}+1}{x+1})$, & $((5,1^3)^-, \frac{x^5-1}{x-1})$,  \\
	    & $((3,1^5)^+, \frac{x^3+1}{x+1})$, & $((3,1^5)^-, \frac{x^3-1}{x-1})$,  \\
	    & $((5,3)^+, \frac{(x^{15}-1)(x-1)}{(x^5-1)(x^3-1)})$, & $((5,3)^-, \frac{(x^{15}+1)(x+1)}{(x^5+1)(x^3+1)})$,  \\
        & $((8)^+, \frac{x^{8}-1}{x\pm1})$, & $((8)^-, \frac{x^{8}-1}{x\mp1})$,  \\
        & $((5,2,1)^\pm, \frac{x^{10}+1}{x^2+1})$, & $((3,2^2,1), \frac{x^6+1}{x^2+1})$, \\
        & $((3,2,1^3), \frac{x^{6}+1}{x^2+1})$, & $((4,3,1)^\pm, \frac{x^{12}+1}{x^4+1})$. \\
        $\lambda=(7,1)$ & $((5,3)^\pm, \frac{x^{15}\mp1}{x\mp1})$. &   \\   \hline
    \end{tabular}
\end{table}
\begin{table}[H] 
\caption{Non-trivial exceptions for $1\leq n\leq 9$ in $\tilde A_n$}
\label{table:exceptional_cases_1_8_alt}
    \centering
    \renewcommand{\arraystretch}{1.2}
    \begin{tabular}{|c|cc|}
    \hline
        & \multicolumn{2}{c|}{$(\text{conjugacy class}, \text{minimal polynomial})$} \\
        \hline
        $n=3$  &   &  \\
        $\rho_{(3)}^+$ & $((3)^{\pm+}, x\pm \omega_3^2)$, & $((3)^{\pm-}, x \pm \omega_3)$ \\
        $\rho_{(3)}^-$ & $((3)^{\pm+}, x\pm \omega_3)$, & $((3)^{\pm-}, x \pm \omega_3^2)$ \\
        \hline
        $n=4$  &   &  \\
		$\rho_{(3,1)}^+$ & $((3,1)^{\pm+}, (x\pm 1)(x \pm \omega_3)$, & $((3,1)^{\pm-}, (x\pm 1)(x \pm \omega_3^2))$\\
       $\rho_{(3,1)}^-$ & $((3,1)^{\pm+}, (x\pm 1)(x \pm \omega_3^2))$, & $((3,1)^{\pm-}, (x\pm 1)(x \pm \omega_3))$\\
       \hline
        $n=5$  &   &   \\
        $\rho_{(5)}^+$ & $((5)^{\pm+}, (x \pm \omega_5)(x \pm \omega_5^4))$, & $((5)^{\pm-},(x \pm \omega_5^2)(x \pm \omega_5^3) )$, \\
        $\rho_{(5)}^-$ & $((5)^{\pm+},(x \pm \omega_5^2)(x \pm \omega_5^3) )$, & $((5)^{\pm-}, (x \pm \omega_5)(x \pm \omega_5^4))$, \\
         \hline
	    $n=7$ & &  \\ $\rho_{(7)}^+$
		& $((7)^{\pm+}, (x \mp 1)(x \mp \omega_7^3)(x \mp \omega_7^5)(x \mp \omega_7^6))$, & $((7)^{\pm-}, (x\mp 1)(x \mp \omega_7)(x \mp \omega_7^2)(x \mp \omega_7^4))$,  \\
	     $\rho_{(7)}^-$
		& $((7)^{\pm+}, (x\mp 1)(x \mp \omega_7)(x \mp \omega_7^2)(x \mp \omega_7^4)))$, & $((7)^{\pm-}, (x \mp 1)(x \mp \omega_7^3)(x \mp \omega_7^5)(x \mp \omega_7^6))$,  \\
	    \hline
     $n=9$  & & \\
		$\rho_{(9)}^+$ & $((9)^{\pm+}, \frac{(x^9 \mp 1)}{(x^2\pm x+1)}))$,  & $((9)^{\pm-}, \frac{(x^9\mp 1)}{(x \mp 1)})$, \\
        $\rho_{(9)}^-$ & $((9)^{\pm+}, \frac{(x^9\mp 1)}{(x \mp 1)})$,  & $((9)^{\pm-}, \frac{(x^9 \mp 1)}{(x^2\pm x+1)}))$, \\
        \hline
    \end{tabular}
\end{table}

\subsection*{Acknowledgements}
We thank Maria Grechkoseeva for her help in proving Lemma~\ref{l:sign_of_powers_of_cycles}.
The second named author is supported by ANRF National Postdoctoral Fellowship (PDF/2025/003158).
The third named author is supported by by the Russian Science Foundation, project 24-11-00119, \url{https://rscf.ru/en/project/24-11-00119/}.

\bibliographystyle{abbrv}
\bibliography{refs}
\end{document}